\setlist[enumerate,1]{label={\upshape(\roman*)}}
\newtheorem{thm}{Theorem}[section]
\newtheorem{thm*}{Theorem}
\newtheorem{cor}[thm]{Corollary}
\newtheorem{lem}[thm]{Lemma}
\newtheorem{lem*}{Lemma}
\newtheorem{pro*}{Proposition}
\theoremstyle{definition}
\newtheorem{defi}[thm]{Definition}
\newtheorem{defi*}{Definition}
\newtheorem{ex}[thm]{Example}
\newcommand{\MC}[1]{\mathcal{#1}}
\newcommand{\MB}[1]{\mathbb{#1}}
\newcommand{\C}[2]{\begin{pmatrix} #1 \\ #2 \end{pmatrix}}
\newcommand{\TR}[1]{}
\newcommand{\TB}{\textcolor{black}}
\newcommand{\G}{\Gamma}
\newcommand{\Span}[1]{\left \langle #1 \right \rangle}
\newcommand{\e}{\epsilon}
\newcommand{\spec}{\sigma}
\newcommand{\aut}{g}
\DeclareMathOperator{\Aut}{Aut}
\renewcommand{\MNOTE}[2][]{}
\title
{
%\TR{Automorphisms of graphs and perfect state transfer in Grover walks (Temporary title)}
%\TR{Circulant graphs to admit perfect state transfer \\ (Temporary title)} %with degree at most 4
%Symmetry of graphs and perfect state transfer in Grover walks
Circulant graphs with valency up to 4 that admit perfect state transfer in Grover walks
\MNOTE{Change A.(ii) of Major comments}
}
\author{
Sho Kubota\thanks{
Department of Mathematics Education,
Aichi University of Education,
1 Hirosawa, Igaya-cho, Kariya, Aichi 448-8542, Japan.
\texttt{skubota@auecc.aichi-edu.ac.jp}}
%Department of Information Systems,
%Faculty of Information Science and Technology,
%Osaka Institute of Technology,
%Hirakata-city,
%Osaka 573-0196, Japan.
%\texttt{sho.kubota@oit.ac.jp}}
\and
Kiyoto Yoshino\thanks{
Department of Computer Science,
Faculty of Applied Information Science,
Hiroshima Institute of Technology,
Saeki Ward, Hiroshima, 731-5143, Japan.
\texttt{k.yoshino.n9@cc.it-hiroshima.ac.jp}
}
}
\date{}
\begin{document}
\maketitle
\begin{abstract} \MNOTE{Change A.(iii) of Major comments}
We completely characterize circulant graphs with valency up to $4$ that admit perfect state transfer. 
Those of valency $3$ do not admit it.
On the other hand, circulant graphs with valency $4$ admit perfect state transfer only in two infinite families: one discovered by Zhan and another new family, while no others do.
The main tools for deriving these results are symmetry of graphs and eigenvalues.
We describe necessary conditions for perfect state transfer to occur based on symmetry of graphs,
which mathematically refers to automorphisms of graphs.
As for eigenvalues, if perfect state transfer occurs,
then certain eigenvalues of the corresponding isotropic random walks must be the halves of algebraic integers.
Taking this into account,
we utilize known results on the rings of integers of cyclotomic fields.
\vspace{8pt} \\
{\it Keywords:} perfect state transfer, Grover walk, circulant graph, automorphism \\
{\it MSC 2020 subject classifications:} 05C50; 81Q99
%05C50: Graphs and linear algebra
%05C20 Digraphs
%05C81: Random walks on graphs
%81Q99: None of them (quantum theory), but in this section
\end{abstract}

\section{Introduction} \label{s1}
Quantum walks are a powerful tool in quantum computing and quantum information processing,
with potential applications in quantum algorithms~\cite{ambainis2007quantum},
quantum simulations~\cite{de2014quantum},
and quantum cryptography~\cite{vlachou2015quantum}.
They have also been studied from a purely mathematical point of view as they show interesting properties not found in classical random walks, such as localization \cite{inui2004localization} and periodicity \cite{higuchi2017periodicity}.
One of the most attractive properties of quantum walks is perfect state transfer,
which roughly speaking refers to transferring between specific quantum states with probability $1$.
Perfect state transfer has important applications in quantum communication and quantum information processing \cite{christandl2004perfect}.
%Quantum states involved in the transfer may contain some kind of message, or they may be used to create entanglement for quantum teleportation between two sites \TR{(Citation)}.
%it can be used to establish entanglements between remote nodes or to transmit quantum information over long distances.

The subject of this paper is perfect state transfer in Grover walks, which are typical discrete time quantum walks.
Although perfect state transfer in continuous time quantum walks has been well studied,
those in discrete time quantum walks have not been as well investigated as in continuous time quantum walks.
For literature on perfect state transfer in continuous time quantum walks, we refer to Godsil's survey \cite{godsil2012state}.
Representative studies of perfect state transfer in discrete time quantum walks are summarized in Section 1 of Zhan's paper \cite{zhan2019infinite}.
Among these, the studies dealt with Grover walks are summarized in Table~\ref{0316-1}.

\begin{table}[h]
  \centering
  \begin{tabular}{|c|c|}
\hline
Graphs & Ref. \\
\hline
\hline
Diamond chains & \cite{kendon2011perfect} \\ \hline
$\overline{K_2} + \overline{K_n}$,
$\overline{K_2} + C_n$ & \cite{barr2012periodicity} \\ \hline
Circulant graphs with valency $4$ & \cite{zhan2019infinite} \\ \hline
%Complete multipartite graphs with partite sets having the same size
$K_{m, m, \dots, m}$ & \cite{kubota2022perfect}  \\ \hline
\end{tabular}
\caption{Previous works on perfect state transfer in Grover walks} \label{0316-1}
\end{table}

We focus on Zhan's work \cite{zhan2019infinite}.
Zhan discovered an infinite family of circulant graphs with valency $4$ that admits perfect state transfer.
We extend her result and completely characterize circulant graphs up to valency $4$ that admit perfect state transfer.
In order to describe our results in detail,
we will introduce the symbols and terms used in this paper

%See \cite{godsil2013} for basic terminologies related to graphs.
Let $\G =(V, E)$ be a graph with the vertex set $V$ and the edge set $E$.
Throughout this paper, we assume that graphs are simple and finite,
i.e., $|V| < \infty$ and $E \subset \{\{x,y\} \subset V \mid x \neq y\}$.
%For $x \in V$,
%the set of neighbors of $x$ is denoted by $N(x)$.
Define $\MC{A} = \MC{A}(\G)=\{ (x, y), (y, x) \mid \{x, y\} \in E \}$,
which is the set of \emph{symmetric arcs} of $\G$.
The origin $x$ and terminus $y$ of $a=(x, y) \in \MC{A}$ are denoted by $o(a)$ and $t(a)$, respectively.
We write the inverse arc of $a$ as $a^{-1}$.
We define several matrices on Grover walks.
Note that Grover walks are also referred to as arc-reversal walks or arc-reversal Grover walks,
and they are known as a special case of bipartite walks~\cite{chen2022hamiltonians}.
Let $\G = (V, E)$ be a graph, and set $\MC{A} = \MC{A}(\Gamma)$.
Denote by $I_V$ and $I_{\MC{A}}$ the identity matrices indexed by $V$ and $\MC{A}$, respectively.\MNOTE[green]{Change B.3 by yoshino. Reviewer 2 request to use $I_V$ and $I_\MC{A}$}
The \emph{boundary matrix} $d = d(\G) \in \MB{C}^{V \times \MC{A}}$ is defined by
\[ d_{x,a} = \frac{1}{\sqrt{\deg x}} \delta_{x, t(a)}, \MNOTE[green]{Change B.4}\]
where $\delta_{a,b}$ is the Kronecker delta.
%By directly calculating the entries based on the definition of matrix multiplication, it follows that
%\begin{equation} \label{1124-2}
%dd^* = I,
%\end{equation}
%where $I$ is the identity matrix.
The \emph{shift matrix} $R = R(\G) \in \MB{C}^{\MC{A} \times \MC{A}}$
is defined by $R_{a, b} = \delta_{a,b^{-1}}$.
%Clearly,
%\begin{equation}
Note that $R^2 = I_{\MC{A}}$\MNOTE[green]{Change B.3}.
%\end{equation}
Define the \emph{time evolution matrix} $U = U(\G) \in \MB{C}^{\MC{A} \times \MC{A}}$
by $U = R(2d^*d-I_{\MC{A}})$\MNOTE[green]{Change B.3}.
%The matrix $U$ is sometimes called the {\it Grover transfer matrix}.
The entries of $U$ are computed as
\begin{equation} \label{1223-1}
U_{a,b} = \frac{2}{\deg_{\G} t(b)} \delta_{o(a), t(b)} - \delta_{a,b^{-1}},
\end{equation}
whose substantial proof is in Lemma~5.1 of~\cite{kubota2021periodicity}.
The quantum walk defined by $U$ is called the \emph{Grover walk} on $\Gamma$.

%Note that
The matrix $U$ can be visually captured by regarding it as a linear mapping.
Let $\G = (V, E)$ be a graph.
We write the entries of a vector $\Psi \in \MB{C}^{\MC{A}}$
on the arcs of the graph as in Figure~\ref{48}.
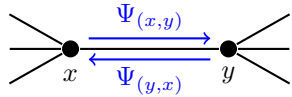
\begin{figure}[ht]
\begin{center}
\begin{tikzpicture}
[scale = 0.7,
line width = 0.8pt,
v/.style = {circle, fill = black, inner sep = 0.8mm},u/.style = {circle, fill = white, inner sep = 0.1mm}]
  \node[u] (1) at (-1.2, 0) {};
  \node[v] (2) at (0, 0) {};
  \node[v] (3) at (3, 0) {};
  \node[u] (4) at (-1.2, 0.68) {};
  \node[u] (5) at (-1.2, -0.68) {};
  \node[u] (7) at (4.2, 0) {};
  \node[u] (8) at (4.2, 0.68) {};
  \node[u] (9) at (4.2, -0.68) {};
  \node[u] (12) at (3.3, 0.2) {};
  \node[u] (13) at (5.7, 0.2) {};
  \draw (0,-0.5) node{$x$};
  \draw (3,-0.5) node{$y$};
  \draw (1) to (2);
  \draw[-] (2) to (4);
  \draw[-] (2) to (3);
  \draw[-] (5) to (2);
  \node[u] (10) at (0.3, 0.2) {};
  \node[u] (11) at (2.7, 0.2) {};
  \draw[draw= blue,->] (10) to (11);
  \node[u] (20) at (0.3, -0.2) {};
  \node[u] (21) at (2.7, -0.2) {};
  \draw[draw= blue,->] (21) to (20);
  \draw[-] (3) to (7);
  \draw[-] (3) to (8);
  \draw[-] (3) to (9);
  \draw (1.5,0.6) node[blue]{$\Psi_{(x,y)}$};
  \draw (1.5,-0.6) node[blue]{$\Psi_{(y,x)}$};
\end{tikzpicture}
\caption{The entries of a vector written on the arcs of the graph} \label{48}
\end{center}
\end{figure}
If an entry of $\Psi$ is $0$,
we omit the arc itself corresponding to the entry.
If an entry of $\Psi$ is $1$,
we may omit the value on the arc corresponding to the entry.
%\sout{This view verifies that the standard basis of %$\MB{C}^{\MC{A}}$ transitions by the action of $U$ as shown in %Figure~\ref{0325-1}.
%Roughly speaking,}
%This view is useful to capture how the standard basis of $\MB{C}^{\MC{A}}$ transitions by the action of $U$.
See Figure~\ref{0325-1}.
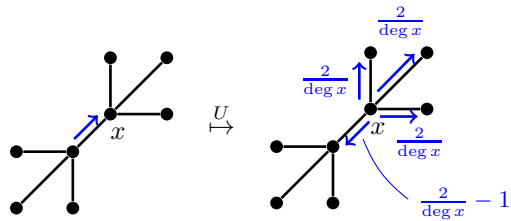
\begin{figure}[ht]
\begin{center}
\begin{tikzpicture}
[scale = 0.5,
v/.style = {circle, fill = black, inner sep = 0.6mm},
u/.style = {circle, fill = white, inner sep = 0.1mm}
]
\node[u] (x) at (0.7, 0) {$x$};
\node[u, white] (13) at (1.25, 2.7) {{\scriptsize$\frac{2}{\deg x}$}};
\node[u, white] (15) at (0, -2) {{\scriptsize$\frac{2}{\deg x} - 1$}};
\node[v] (1) at (0.5,0.5) {};
\node[v] (2) at (0.5, 2) {};
\node[v] (3) at (2, 2) {};
\node[v] (4) at (2, 0.5) {};
\draw[line width = 1pt] (1) to (2);
\draw[line width = 1pt] (1) to (3);
\draw[line width = 1pt] (1) to (4);
\node[v] (5) at (-0.5,-0.5) {};
\node[v] (6) at (-0.5, -2) {};
\node[v] (7) at (-2, -2) {};
\node[v] (8) at (-2, -0.5) {};
\draw[line width = 1pt] (1) to (5);
\draw[line width = 1pt] (5) to (6);
\draw[line width = 1pt] (5) to (7);
\draw[line width = 1pt] (5) to (8);
\node[u] (51L) at (-0.5, -0.2) {};
\node[u] (51R) at (0.2, 0.5) {};
\draw[draw = blue, line width = 1pt, ->] (51L) to (51R);
\end{tikzpicture}
\raisebox{45pt}{$\quad \overset{U}{\mapsto} \quad$}
\begin{tikzpicture}
[scale = 0.5,
v/.style = {circle, fill = black, inner sep = 0.6mm},
u/.style = {circle, fill = white, inner sep = 0.1mm}
]
\node[u] (15) at (3, -2) {\textcolor{blue}{{\small$\frac{2}{\deg x} - 1$}}};
\node[u] (12) at (-0.7, 1.25) {\textcolor{blue}{{\small$\frac{2}{\deg x}$}}};
\node[u] (13) at (1.3, 2.8) {\textcolor{blue}{{\small$\frac{2}{\deg x}$}}};
\node[u] (14) at (1.8, -0.4) {\textcolor{blue}{{\small$\frac{2}{\deg x}$}}};
\node[u] (x) at (0.7, 0) {$x$};
\node[v] (1) at (0.5,0.5) {};
\node[v] (2) at (0.5, 2) {};
\node[v] (3) at (2, 2) {};
\node[v] (4) at (2, 0.5) {};
\draw[line width = 1pt] (1) to (2);
\draw[line width = 1pt] (1) to (3);
\draw[line width = 1pt] (1) to (4);
\node[v] (5) at (-0.5,-0.5) {};
\node[v] (6) at (-0.5, -2) {};
\node[v] (7) at (-2, -2) {};
\node[v] (8) at (-2, -0.5) {};
\draw[line width = 1pt] (1) to (5);
\draw[line width = 1pt] (5) to (6);
\draw[line width = 1pt] (5) to (7);
\draw[line width = 1pt] (5) to (8);
\node[u] (15R) at (0.5, 0.2) {};
\node[u] (15L) at (-0.2, -0.5) {};
\draw[draw = blue, line width = 1pt, ->] (15R) to (15L);
\node[u] (12o) at (0.2, 0.7) {};
\node[u] (12t) at (0.2, 1.8) {};
\draw[draw = blue, line width = 1pt, ->] (12o) to (12t);
\node[u] (13o) at (0.65, 0.93) {};
\node[u] (13t) at (1.7, 2) {};
\draw[draw = blue, line width = 1pt, ->] (13o) to (13t);
\node[u] (14o) at (0.7, 0.3) {};
\node[u] (14t) at (1.8, 0.3) {};
\draw[draw = blue, line width = 1pt, ->] (14o) to (14t);
\draw[-, blue] (1.5,-1.9) to [bend left = 15] (0.3,-0.3);
\end{tikzpicture}
\caption{The action of $U$} \label{0325-1}
\end{center}
\end{figure}
An arrow transmits with weight $\frac{2}{\deg_{\G} x}$ except for the opposite arrow,
and reflects to the opposite arrow with weight $\frac{2}{\deg_{\G} x} -1$.
See Section~6 in \cite{kubota2021periodicity} 
\TR{as $\eta = 0$} 
\MNOTE{Change~A.1}
for details.
In particular,
an arrow is \TR{fully transmitted}\TB{perfectly transmitted}\MNOTE[green]{Change B.5}
if the degree of the terminus of the arc is $2$,
as shown in Figure~\ref{44}.
\begin{figure}[htb]
\begin{center}
\begin{tikzpicture}
[scale = 0.6,
line width = 0.8pt,
v/.style = {circle, fill = black, inner sep = 0.8mm},u/.style = {circle, fill = white, inner sep = 0.1mm}]
  \node[u] (1) at (-1.2, 0) {};
  \node[v] (2) at (0, 0) {};
  \node[v] (3) at (3, 0) {};
  \node[u] (4) at (-1.2, 0.68) {};
  \node[u] (5) at (-1.2, -0.68) {};
  \node[v] (6) at (6, 0) {};
  \node[u] (7) at (7.2, 0) {};
  \node[u] (8) at (7.2, 0.68) {};
  \node[u] (9) at (7.2, -0.68) {};
  \node[u] (10) at (0.3, 0.2) {};
  \node[u] (11) at (2.7, 0.2) {};
  \node[u] (12) at (3.3, 0.2) {};
  \node[u] (13) at (5.7, 0.2) {};
  \draw (1) to (2);
  \draw[-] (2) to (4);
  \draw[-] (2) to (3);
  \draw[-] (5) to (2);
  \draw[draw= blue,->] (10) to (11);
  \draw[-] (3) to (6);
  \draw[-] (6) to (7);
  \draw[-] (6) to (8);
  \draw[-] (6) to (9);
\end{tikzpicture}
\raisebox{3.5mm}{$\quad \overset{U}{\mapsto} \quad$}
\begin{tikzpicture}
[scale = 0.7,
line width = 0.8pt,
v/.style = {circle, fill = black, inner sep = 0.8mm},u/.style = {circle, fill = white, inner sep = 0.1mm}]
  \node[u] (1) at (-1.2, 0) {};
  \node[v] (2) at (0, 0) {};
  \node[v] (3) at (3, 0) {};
  \node[u] (4) at (-1.2, 0.68) {};
  \node[u] (5) at (-1.2, -0.68) {};
  \node[v] (6) at (6, 0) {};
  \node[u] (7) at (7.2, 0) {};
  \node[u] (8) at (7.2, 0.68) {};
  \node[u] (9) at (7.2, -0.68) {};
  \node[u] (10) at (0.3, 0.2) {};
  \node[u] (11) at (2.7, 0.2) {};
  \node[u] (12) at (3.3, 0.2) {};
  \node[u] (13) at (5.7, 0.2) {};
  \draw (4.5,0.7) node[blue]{};
  \draw (1) to (2);
  \draw[-] (2) to (4);
  \draw[-] (2) to (3);
  \draw[-] (5) to (2);
  \draw[draw= blue,->] (12) to (13);
  \draw[-] (3) to (6);
  \draw[-] (6) to (7);
  \draw[-] (6) to (8);
  \draw[-] (6) to (9);
\end{tikzpicture}
\caption{The action of $U$ in the case where the degree of the terminus of an arc is $2$} \label{44}
\end{center}
\end{figure}
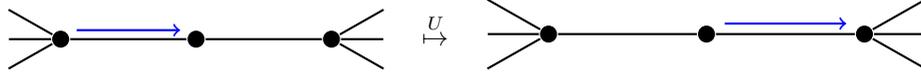

Let $\G$ be a graph,
and let $U = U(\G)$ be the time evolution matrix.
A vector $\Phi \in \MB{C}^{\MC{A}}$ is a \emph{state} if $\| \Phi \| = 1$.
For distinct states $\Phi$ and $\Psi$,
we say that \emph{perfect state transfer} occurs from $\Phi$ to $\Psi$ at time $\tau \in \MB{Z}_{\geq 1}$
if there exists $\gamma \in \MB{C}$ with norm one such that $U^{\tau}\Phi = \gamma \Psi$.
%As mentioned in Section~4 of \cite{chan2021pretty},
%the occurrence of perfect state transfer can be restated as follows.
%\begin{lem}[Section~4 in \cite{chan2021pretty}] \label{1201-2}
%Let $\G$ be a graph,
%and let $U = U(\G)$ be the time evolution matrix.
%Perfect state transfer occurs from a state $\Phi$ to a state $\Psi$ at time $\tau$
%if and only if $|\Span{ U^{\tau}\Phi, \Psi }| = 1$.
%\end{lem}
%Let $\G$ be a graph with the vertex set $V$.
A state $\Phi$ is said to be \emph{vertex type}
if there exists a vertex $x \in V$ such that $\Phi = d^* e_{x}$,
where $e_{x} \in \MB{C}^{V}$ is the unit vector defined by $(e_{x})_z = \delta_{x,z}$.
A vertex type state represents a state that has values only on arcs reaching toward some vertex.
Indeed,
letting $\TR{e_{a}}\TB{\e_a}\MNOTE[green]{Change B.7} \in \MB{C}^{\MC{A}}$ be the unit vector defined by $(\TR{e_{a}}\TB{\e_a}\MNOTE[green]{Change B.7})_z = \delta_{a,z}$,
we have
\[ d^* e_{x} = \frac{1}{\sqrt{\deg x}}\sum_{\substack{a \in \MC{A} \\ t(a) = x}} \TR{e_{a}}\TB{\e_a}\MNOTE[green]{Change B.7}. \]
This can be visually represented as shown in the left of Figure~\ref{1222-1}.
A vertex type state corresponds to the situation that
a quantum walker is only on some vertex
in the model where a walker moves on vertices.
See also the right of Figure~\ref{1222-1}.
We study perfect state transfer between vertex type states.
See~\cite{kubota2022perfect} for motivation to restrict states under consideration to vertex type states.

\begin{figure}[ht] 
\begin{center}
\begin{tikzpicture}
[scale = 0.7,
line width = 0.8pt,
v/.style = {circle, fill = black, inner sep = 0.8mm},
u/.style = {circle, fill = white, inner sep = 0.1mm}]
  \node[v] (1) at (2, 0) {};
  \node[v] (2) at (0, 2) {};
  \node[v] (3) at (-2, 0) {};
  \node[v] (4) at (0, 4.6) {};
  \node[u] (10) at (0.4, 2.4) {$x$};
  \node[u] (1012) at (1.6, 1.65) {\textcolor{blue}{$\frac{1}{\sqrt{3}}$}};
  \node[u] (1032) at (-1.7, 1.65) {\textcolor{blue}{$\frac{1}{\sqrt{3}}$}};
  \node[u] (1032) at (-0.6, 3.3) {\textcolor{blue}{$\frac{1}{\sqrt{3}}$}};
  \draw[-] (1) to (2);
  \draw[-] (2) to (3);
  \draw[-] (4) to (2);  
  \node[u] (11) at (2, 0.3) {};
  \node[u] (22) at (0.3, 2) {};
  \draw[draw= blue,->] (11) to (22);
  \node[u] (23) at (-0.3, 2) {};
  \node[u] (33) at (-2, 0.3) {};
  \draw[draw= blue,->] (33) to (23);
  \node[u] (a) at (-0.15, 4.4) {};
  \node[u] (b) at (-0.15, 2.2) {};
  \draw[draw= blue,->] (a) to (b);
  \node[u] (4L) at (-0.5, 5.1) {};
  \node[u] (4R) at (0.5, 5.1) {};
  \draw[-] (4) to (4L);
  \draw[-] (4) to (4R);
  \node[u] (3L) at (-2.65, 0) {};
  \node[u] (3R) at (-2, -0.65) {};
  \draw[-] (3) to (3L);
  \draw[-] (3) to (3R);
  \node[u] (1R) at (2.65, 0) {};
  \node[u] (1L) at (2, -0.65) {};
  \draw[-] (1) to (1L);
  \draw[-] (1) to (1R);
\end{tikzpicture}
\raisebox{50pt}{
$\qquad \leftrightarrow$
}
\raisebox{-9pt}{
\begin{tikzpicture}
[scale = 0.7,
baseline = -23pt,
line width = 0.8pt,
v/.style = {circle, fill = black, inner sep = 0.8mm},
u/.style = {circle, fill = white, inner sep = 0.1mm}]
  \node[u] (2v) at (1.2, 2.6) {{\footnotesize \textcolor{blue}{$\begin{bmatrix} 1/\sqrt{3} \\ 1/\sqrt{3} \\ 1/\sqrt{3} \end{bmatrix}$}}};
  \node[u] (1v) at (2.5, 1) {{\footnotesize $\begin{bmatrix} 0 \\ 0 \\ 0  \end{bmatrix}$}};
  \node[u] (3v) at (-2.5, 1) {{\footnotesize $\begin{bmatrix} 0 \\ 0 \\ 0  \end{bmatrix}$}};
  \node[u] (4v) at (1, 4.6) {{\footnotesize $\begin{bmatrix} 0 \\ 0 \\ 0  \end{bmatrix}$}};
%  \node[u] (10) at (0.4, 2.4) {$x$};
  \node[v] (1) at (2, 0) {};
  \node[v] (2) at (0, 2) {};
  \node[v] (3) at (-2, 0) {};
  \node[v] (4) at (0, 4.6) {};
%  \node[u, blue] (1032) at (-1.8, 1.65) {$\frac{1}{\sqrt{3}}$};
%  \node[u, blue] (1032) at (-1, 3.3) {$\frac{1}{\sqrt{3}}$};
  \draw[-] (1) to (2);
  \draw[-] (2) to (3);
  \draw[-] (4) to (2);  
%  \node[u] (11) at (2, 0.3) {};
%  \node[u] (22) at (0.3, 2) {};
%  \draw[draw= blue,->] (11) to (22);
%  \node[u] (23) at (-0.3, 2) {};
%  \node[u] (33) at (-2, 0.3) {};
%  \draw[draw= blue,->] (33) to (23);
%  \node[u] (a) at (-0.15, 4.4) {};
%  \node[u] (b) at (-0.15, 2.2) {};
%  \draw[draw= blue,->] (a) to (b);
  %
  \node[u] (4L) at (-0.5, 5.1) {};
  \node[u] (4R) at (0.5, 5.1) {};
  \draw[-] (4) to (4L);
  \draw[-] (4) to (4R);
  \node[u] (3L) at (-2.65, 0) {};
  \node[u] (3R) at (-2, -0.65) {};
  \draw[-] (3) to (3L);
  \draw[-] (3) to (3R);
  \node[u] (1R) at (2.65, 0) {};
  \node[u] (1L) at (2, -0.65) {};
  \draw[-] (1) to (1L);
  \draw[-] (1) to (1R);
\end{tikzpicture}
}
\caption{A vertex type state $d^* e_x$ with $\deg x = 3$} \label{1222-1}
\end{center}
\end{figure}
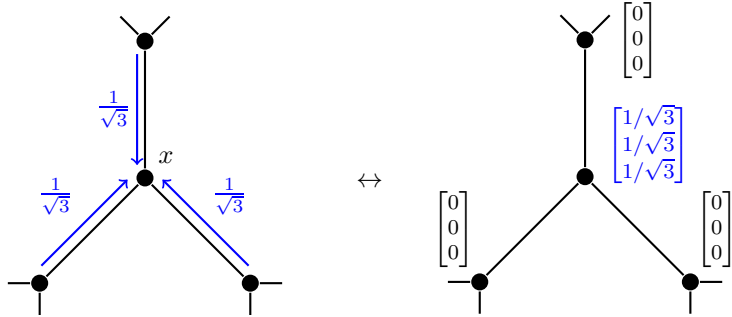

\begin{defi}
    For a graph $\Gamma$, we say that $\Gamma$ \emph{admits perfect state transfer} if perfect state transfer occurs for $U(\Gamma)$ between two distinct vertex type states.
\end{defi}

In this paper,
we completely characterize circulant graphs up to valency 4 that admit perfect state transfer.
Our main tools are symmetry of graphs and eigenvalues.
We will show a useful necessary condition for perfect state transfer to occur, which asserts that if it occurs between vertex type states $d^*e_x$ and $d^*e_y$,
then automorphisms that fix the vertex $x$ must also fix the vertex $y$.
See Corollary~\ref{1124-3} for details.
This fact is well-known in the context of continuous-time quantum walks.
As for eigenvalues, if perfect state transfer occurs,
then certain eigenvalues of the corresponding isotropic random walks must be the halves of algebraic integers.
Taking this into account,
we can use certain facts on the rings of integers of cyclotomic fields to completely characterize circulant graphs up to valency 4 that admit perfect state transfer.
The main theorems are the following two. 
For the symbol $X$ representing a circulant graph, refer to Section~\ref{sec:circulant}.

\begin{thm} \label{M0}
Let $X = X(\MB{Z}_{2l}, \{\pm a, l \} )$ be a connected $3$-regular circulant graph,
where $a \in \{1, \dots, l-1\}$.
Then, perfect state transfer does not occur on $X$ between vertex type states.
\end{thm}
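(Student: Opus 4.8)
The plan is to combine the automorphism necessary condition with a character/spectral analysis that funnels into cyclotomic arithmetic. First I would use Corollary~\ref{1124-3} to pin down the target. Since $X=X(\MB{Z}_{2l},\{\pm a,l\})$ is vertex transitive (the translations $v\mapsto v+c$ are automorphisms), I may assume the source is $d^*e_0$. The reflection $\rho\colon v\mapsto -v$ is an automorphism, because the connection set $\{\pm a,l\}$ is inverse closed, and $\rho$ fixes $0$. Hence by Corollary~\ref{1124-3} any target vertex $y\neq 0$ must satisfy $\rho(y)=y$, i.e. $2y\equiv 0\pmod{2l}$, forcing $y=l$. So it suffices to rule out perfect state transfer from $d^*e_0$ to $d^*e_l$.

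Next I would set up the spectral reduction. Let $f_j$ ($j\in\MB{Z}_{2l}$) be the normalized additive characters of $\MB{Z}_{2l}$; they form an orthonormal eigenbasis of the isotropic random walk matrix $T:=dRd^*=\tfrac13 A$, with $Tf_j=\mu_j f_j$, $\mu_j=\tfrac13\lambda_j$, $\lambda_j=2\cos(\pi ja/l)+(-1)^j$, and $\omega=e^{\pi i/l}$ a $2l$-th root of unity. Using $dd^*=I_V$ and $dRd^*=T$ one gets $Ud^*f_j=Rd^*f_j$ and $URd^*f_j=2\mu_jRd^*f_j-d^*f_j$, so each $P_j=\langle d^*f_j,Rd^*f_j\rangle$ is $U$-invariant, and an inner-product computation shows the $P_j$ are mutually orthogonal. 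Writing $e_0=\sum_j\alpha_jf_j$ and $e_l=\sum_j(-1)^j\alpha_jf_j$ and diagonalizing $U|_{P_j}$ (eigenvalues $e^{\pm i\theta_j}$ with $\cos\theta_j=\mu_j$) through Chebyshev polynomials, the equation $U^\tau d^*e_0=\gamma d^*e_l$ decouples over $j$. For every $j$ with $\mu_j\in(-1,1)$ it becomes
\[
\sin(\tau\theta_j)=0,\qquad \cos(\tau\theta_j)=\gamma(-1)^j,
\]
while the indices with $\mu_j=\pm1$ give scalar relations (in particular $j=0$, the Perron eigenvalue, forces $\gamma=1$).

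The arithmetic step is the heart of the generic case. The relation $\sin(\tau\theta_j)=0$ says $\theta_j\in\frac{\pi}{\tau}\MB{Z}$, so $e^{i\theta_j}$ is a root of unity and $2\mu_j=e^{i\theta_j}+e^{-i\theta_j}$ is an algebraic integer -- this is the ``half of an algebraic integer'' phenomenon flagged in the introduction. Since $2\mu_j=\tfrac23\lambda_j$ and $\lambda_j\in\MB{Z}[\omega]=\MB{Z}[\zeta_{2l}]$, this means $3\mid\lambda_j$ in $\MB{Z}[\zeta_{2l}]$ for all $j$ with $\lambda_j\neq\pm3$; as $\pm3$ are visibly divisible by $3$, in fact $3\mid\lambda_j$ for every $j$. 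Equivalently $\det(xI_V-A)\equiv x^{2l}\pmod 3$, i.e. the circulant $A=P^a+P^{-a}+P^l$ is nilpotent over $\MB{F}_3$. When $3\nmid l$ the group algebra $\MB{F}_3[\MB{Z}_{2l}]$ is reduced, so its only nilpotent is $0$; but $A\neq 0$ (its three monomials are distinct), a contradiction, and perfect state transfer is excluded.

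The main obstacle is the residual case $3\mid l$, where $\MB{F}_3[\MB{Z}_{2l}]$ carries genuine nilpotents, so the arithmetic alone no longer closes the argument (for instance $K_{3,3}=X(\MB{Z}_6,\{\pm1,3\})$ passes the algebraic-integrality test, with spectrum $\{3,-3,0,0,0,0\}$). Reducing the nilpotency condition $(x^{2l/3^s}-1)\mid A(x)$ modulo $3$ and imposing connectivity $\gcd(a,l)=1$ should force the surviving parameters to be exactly $l=3^s$ with $a$ coprime to $6$. For these I would return to the decoupled conditions above: choosing $j_0$ with $j_0a\equiv 3^{s-1}\pmod{2l}$ gives $\lambda_{j_0}=0$, and since $l$ is odd the index $j_0+l$ has the opposite parity while still satisfying $\lambda_{j_0+l}=0$, so $\theta_{j_0}=\theta_{j_0+l}=\pi/2$. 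The two sign conditions then read $\cos(\tau\pi/2)=\gamma(-1)^{j_0}$ and $\cos(\tau\pi/2)=-\gamma(-1)^{j_0}$, which are incompatible since $|\gamma|=1$. I expect precisely this family -- where the eigenvalues survive the cyclotomic test yet perfect state transfer must still fail -- to require the eigenvalue-collision/parity argument rather than pure arithmetic, and so to be the crux of the proof.
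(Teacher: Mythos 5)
Your proposal is correct, but it takes a genuinely different route from the paper's proof. The paper (proof of Theorem~\ref{0331-2}) uses only a \emph{single} eigenvalue: by Lemma~\ref{1206-3} the eigenvalue $\lambda_1 = \frac{1}{3}\left(2\cos\frac{a}{l}\pi - 1\right)$ of $P$ lies in $\Theta_P(e_x)$, Theorem~\ref{pst} forces $\lambda_1 = \cos\frac{s}{\tau}\pi$, and the resulting $\MB{Q}$-linear dependence of $1$, $\cos\frac{a}{l}\pi$, $\cos\frac{s}{\tau}\pi$ is then contradicted using Berger's theorems on cosines of rational angles (Lemmas~\ref{0115-2} and~\ref{1208-1}), with the small surviving cases $l\le 3$ dispatched by citing \cite{kubota2022perfect} for $K_4$ and $K_{3,3}$; notably, the paper never needs the sign condition~\ref{pst:C:a} of Theorem~\ref{pst}\ref{pst:C} in the $3$-regular case. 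You instead exploit \emph{all} eigenvalues: integrality of every $2\mu_j$ yields $3\mid\lambda_j$ in the ring of algebraic integers, hence $\det(xI-A)\equiv x^{2l}\pmod 3$ (each coefficient lies in $3^k\Omega\cap\MB{Z}=3^k\MB{Z}$), so the circulant $x^a+x^{-a}+x^l$ is nilpotent in $\MB{F}_3[\MB{Z}_{2l}]$, and Maschke semisimplicity kills the case $3\nmid l$ since the three monomials are distinct. Your tentatively phrased residual classification does in fact close: nilpotency is equivalent to the image vanishing in $\MB{F}_3[x]/(x^m-1)$ with $m=2l/3^s$, forcing $a\equiv -a\equiv l \pmod m$; writing $l=3^s u$ one gets $\gcd(m,l)=u\mid 2$, and $u=2$ is incompatible with $\gcd(a,l)=1$, so indeed $l=3^s$ with $a$ coprime to $6$, where your parity collision at the eigenvalue $0$ (the characters $j_0$ and $j_0+l$ of opposite parity share $\lambda=0$, making $E_0e_0 \ne \pm E_0 e_l$) finishes via exactly the condition~\ref{pst:C:a} the paper's proof avoids. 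Trade-offs: the paper's argument is shorter and uniform across all $l$ but leans on Berger's nontrivial linear-independence theorem and outsources $K_4$ and $K_{3,3}$; yours is elementary and fully self-contained --- it subsumes $K_4$ (since $-2/3$ is not an algebraic integer) and $K_{3,3}$ (the $l=3$, $s=1$ instance of your parity collision) --- at the cost of the extra residual-case analysis, and it realizes the paper's ``halves of algebraic integers'' theme more directly than the paper's own proof of this theorem does.
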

%In particular, we provide the result for valency $4$ as the following first main theorem.
%Readers who want detailed terminologies and definitions can find them in later sections.

\begin{thm} \label{M1}
Let $X=X(\MB{Z}_{2l}, \{\pm a, \pm b\})$ be a connected $4$-regular circulant graph.
Let $x$, $y \in \MB{Z}_{2l}$.
Then, the graph $X$ admits perfect state transfer from $d^*e_x$ to $d^*e_{y}$ at minimum time $\tau  \in \MB{Z}_{\geq 1}$
if and only if $a+b=l$, $y=x+ l$ and one of the following hold.
\begin{enumerate}
\item $l$ is odd and $\tau=2l$.
\item $l \equiv 2 \pmod 4$ and $\tau=l$.
\end{enumerate}
\end{thm}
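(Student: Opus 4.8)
The plan is to combine the automorphism criterion of Corollary~\ref{1124-3} with a character (Fourier) decomposition of the Grover walk and a cyclotomic-integer analysis of the resulting eigenangles. First, since $X$ is vertex-transitive I may assume $x=0$. The connection set $\{\pm a,\pm b\}$ is symmetric, so the negation map $\sigma\colon z\mapsto -z$ is an automorphism of $X$ fixing $0$. By Corollary~\ref{1124-3}, if perfect state transfer occurs from $d^*e_0$ to $d^*e_y$ then $\sigma$ must fix $y$, that is $-y=y$ in $\MB{Z}_{2l}$, whence $2y\equiv 0$ and $y\in\{0,l\}$. As the two states are distinct, $y=l=x+l$. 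This already forces $y=x+l$ and reduces the problem to antipodal transfer $d^*e_0\mapsto d^*e_l$.

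Next I would diagonalize everything by the characters of $\MB{Z}_{2l}$. Writing $f_j=\tfrac{1}{\sqrt{2l}}(\omega^{jz})_{z}$ with $\omega=e^{\pi i/l}$, the discriminant $dUd^*$ equals the transition matrix $P=\tfrac14 A$, whose eigenvalues are $\mu_j=\tfrac12\bigl(\cos\tfrac{\pi ja}{l}+\cos\tfrac{\pi jb}{l}\bigr)$. A short computation gives $Ud^*f_j=Rd^*f_j$ and $U^2d^*f_j=2\mu_j\,Ud^*f_j-d^*f_j$, so on $\Span{d^*f_j,\,Rd^*f_j}$ the operator $U$ has eigenvalues $e^{\pm i\theta_j}$ with $\cos\theta_j=\mu_j$, and $U^\tau d^*f_j$ is a Chebyshev combination of $d^*f_j$ and $Rd^*f_j$. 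Decomposing $Rd^*f_j=\mu_j d^*f_j+w_j$ with $w_j\in\ker d=(d^*\MB{C}^V)^\perp$ and $\|w_j\|^2=1-\mu_j^2$, and using the key identity $f_j(x+l)=(-1)^j f_j(x)$, I would expand $d^*e_0$ and $d^*e_l$ in $\{d^*f_j\}$ and compare $U^\tau d^*e_0$ with $\gamma\,d^*e_l$ componentwise. Vanishing of the $w_j$-parts forces the second-kind Chebyshev value at $\mu_j$ to vanish, i.e.\ $\tau\theta_j\in\pi\MB{Z}$ for every $j$ (the exceptional modes $\mu_j=\pm1$ satisfying this trivially); writing $m_j=\tau\theta_j/\pi$, on each mode $U^\tau$ then acts as $(-1)^{m_j}$, and matching the $d^*f_j$-parts gives the phase condition $(-1)^{m_j}=\gamma(-1)^j$ for all $j$ with a single unimodular $\gamma$.

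Finally I would extract the arithmetic content. The requirement $\tau\theta_j\in\pi\MB{Z}$ means $\mu_j=\cos(\pi m_j/\tau)$, so $2\mu_j=\zeta^{m_j}+\zeta^{-m_j}$ with $\zeta=e^{\pi i/\tau}$ is an algebraic integer; combined with $4\mu_j=2\cos\tfrac{\pi ja}{l}+2\cos\tfrac{\pi jb}{l}\in\MC{O}_{\MB{Q}(\zeta_{2l})}$, this says each random-walk eigenvalue $\mu_j$ must be the half of an algebraic integer. Via the identity $2\mu_j=2\cos\tfrac{\pi j(a+b)}{2l}\cos\tfrac{\pi j(a-b)}{2l}$, the case $a+b=l$ makes every $\theta_j$ a rational multiple of $\pi$ (indeed $\theta_j=\pi/2$ for odd $j$); the substantial work is the converse, namely showing through known descriptions of $\MC{O}_{\MB{Q}(\zeta_{2l})}$ that $a+b=l$ is in fact forced. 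Once $a+b=l$ is established I would compute the $\theta_j$ explicitly, impose $\tau\theta_j\in\pi\MB{Z}$ together with the uniform-phase condition, and read off the minimal admissible $\tau$, obtaining $\tau=2l$ when $l$ is odd and $\tau=l$ when $l\equiv 2\pmod 4$, the two residues being separated precisely by the parity constraints coming from the odd modes $\theta_j=\pi/2$; sufficiency is the same computation run forward, verifying that $(-1)^{m_j-j}$ is constant in $j$ so that a single $\gamma$ works. I expect the decisive obstacle to be exactly this converse arithmetic step: proving that the simultaneous demand that all $\mu_j$ be halves of algebraic integers (equivalently, all eigenangles rational multiples of $\pi$) admits no possibility other than $a+b=l$, and then isolating the parity cases that pin down the exact minimum time.
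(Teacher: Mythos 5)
Your overall architecture is the same as the paper's: the automorphism reduction to $y=x+l$ is exactly Lemma~\ref{1214-3} (Corollary~\ref{1124-3} applied to the inversion $r$, after rotating $x$ to $0$); your two-dimensional invariant subspaces $\Span{d^*f_j, Rd^*f_j}$ with $\cos\theta_j=\mu_j$, the condition $\tau\theta_j\in\pi\mathbb{Z}$ from the vanishing of the $w_j$-parts, and the phase-matching $(-1)^{m_j}=\gamma(-1)^j$ are an equivalent packaging of the identity $dU^{\tau}d^*=T_{\tau}(P)$ (Lemma~\ref{1201-1}) and of conditions \ref{pst:C:a}--\ref{pst:C:c} in Theorem~\ref{pst}, with your mode-parity identity $f_j(x+l)=(-1)^jf_j(x)$ being precisely the computation behind~(\ref{uj0}) and~(\ref{ujl}); and the algebraic integrality of $2\mu_j$ is Lemma~\ref{0128-5}. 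Within this framework the sufficiency computation, the minimality of $\tau$, and the exclusion of $l\equiv 0\pmod 4$ (in your language: both an odd index $j=1$ and an even index $j=\frac{l}{2}$ give $\mu_j=0$, so no single $\gamma$ can satisfy $(-1)^{m_j}=\gamma(-1)^j$) all go through as in Lemma~\ref{1214-4} and the lemma following it, though your phrase about ``the two residues'' glosses over the $l\equiv 0 \pmod 4$ case and you should make that argument explicit, since it uses the connectivity-forced parity of $a$ and $b$, not algebraic integrality.

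The genuine gap is the step you yourself flag and defer: showing that $a+b\neq l$ forces $\Delta=\frac{1}{2}(\zeta_{2l}^a+\zeta_{2l}^{-a}+\zeta_{2l}^b+\zeta_{2l}^{-b})$ to fail to be an algebraic integer. This is the bulk of the paper, and it is not a routine appeal to known descriptions of the ring of integers of $\mathbb{Q}(\zeta_{2l})$. One first needs $\mathbb{Q}$-linear independence of $\zeta_m^{\pm p},\zeta_m^{\pm q}$, which rests on nontrivial results on rational linear relations among cosines of rational angles (Lemmas~\ref{0115-2} and~\ref{1208-1}, with the exceptional denominators $3$ and $5$ requiring separate treatment, as in Lemma~\ref{0115-4}); one then needs a careful choice of Bosma's canonical integral basis (Theorem~\ref{0117-1}), where the delicate point is the prime $3$: when the residues $\pi_m^{(3)}$ of all four roots occupy all of $\{0,1,2\}$, not all four can be placed in a single basis, and the paper must rewrite $\zeta_m^p=-\zeta_3\zeta_m^p-\zeta_3^2\zeta_m^p$ before comparing coefficients (Lemma~\ref{0120-1}). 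Moreover, your plan silently assumes the four roots are $\mathbb{Q}$-linearly independent, which fails in the degenerate case $4a\equiv 0\pmod{2l}$ (possible when $l$ is even, where $\zeta_{2l}^a+\zeta_{2l}^{-a}=0$); the paper needs the separate Lemma~\ref{0228-2} and a reduced two-root argument there, and without it your ``only if'' direction has a hole. Finally, a small misstatement: ``all $\mu_j$ are halves of algebraic integers'' is not equivalent to ``all eigenangles are rational multiples of $\pi$''; the latter implies the former but not conversely, and the paper deliberately works with the weaker necessary condition. Until the non-integrality argument, including the degenerate case, is actually carried out, the heart of Theorem~\ref{M1} remains unproved in your proposal.
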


\TB{We supplement Theorem~\ref{M1}.
In Theorem 6.1 of \cite{zhan2019infinite},
Zhan showed that perfect state transfer occurs in the case~(i) of our Theorem~\ref{M1}.
We newly show that perfect state transfer also occurs in the case~(ii) and reveal that it does not occur in any other $4$-regular circulant graphs.
\MNOTE[green]{Change B.1}
}
\begin{comment}
The cases of valency $2$ and $3$ are provided in Theorems~\ref{0331-1} and~\ref{0331-2}, respectively.
Roughly speaking, if the valency is $2$, perfect state transfer occurs when the number of vertices is even,
and if the valency is $3$, it does not occur.

To study perfect state transfer on circulant graphs,
we investigate relationships between perfect state transfer and symmetry of graphs, where ``symmetry of graphs" precisely refers to ``automorphisms of graphs".
It has been observed in many examples that perfect state transfer occurs between two ``antipodal" vertices.
This paper makes the reason mathematically more clear.
Relationships between perfect state transfer and automorphisms of graphs are the following, which is our second main theorem.

\begin{thm}	\label{second}
Let $\aut$ be an automorphism of a graph $\G = (V, E)$,
and let $x,y \in V$.
If perfect state transfer occurs from $d^*e_x$ to $d^*e_y$ on $\G$ at time $\tau \in \MB{Z}_{\geq 1}$,
then the following hold.
\begin{enumerate}
\item Perfect state transfer occurs from $d^*e_{\aut(x)}$ to $d^*e_{\aut(y)}$ at time $\tau$.
\item $\Aut(\G)_x = \Aut(\G)_y$.
\end{enumerate}
\end{thm}
\end{comment}

\section{Preliminaries}
\begin{comment}
Let $[n] = \{0,1, \dots, n-1\}$ for a positive integer $n$.
See \cite{godsil2013} for basic terminologies related to graphs.
Let $\G =(V, E)$ be a graph with the vertex set $V$ and the edge set $E$.
Throughout this paper, we assume that graphs are simple and finite,
i.e., $|V| < \infty$ and $E \subset \{\{x,y\} \subset V \mid x \neq y\}$.
%For $x \in V$,
%the set of neighbors of $x$ is denoted by $N(x)$.
Define $\MC{A} = \MC{A}(\G)=\{ (x, y), (y, x) \mid \{x, y\} \in E \}$,
which is the set of the \emph{symmetric arcs} of $\G$.
The origin $x$ and terminus $y$ of $a=(x, y) \in \MC{A}$ are denoted by $o(a)$ and $t(a)$, respectively.
We write the inverse arc of $a$ as $a^{-1}$.

\subsection{Grover walks and related matrices}
We define several matrices on Grover walks.
Note that Grover walks are also referred to as arc-reversal walks or arc-reversal Grover walks,
and they are known as a special case of bipartite walks \cite{chen2022hamiltonians}.
Let $\G = (V, E)$ be a graph, and set $\MC{A} = \MC{A}(\Gamma)$.
The \emph{boundary matrix} $d = d(\G) \in \MB{C}^{V \times \MC{A}}$ is defined by
$d_{x,a} = \frac{1}{\sqrt{\deg x}} \delta_{x, t(a)}$,
where $\delta_{a,b}$ is the Kronecker delta.
\end{comment}
\TB{
This section provides supplementary details that were not covered in the introduction.
First, with respect to boundary matrices,
we have the following by direct calculation.}

\begin{equation} \label{1124-2}
dd^* = \TR{I}\TB{I_V.}\MNOTE[green]{Change B.3}
\end{equation}
\TR{where $I$ is the identity matrix.}
The \emph{discriminant} $P=P(\G) \in \MB{C}^{V \times V}$ is defined by $P = dRd^*$.
\TR{The matrix $P$ corresponds to the time evolution matrix of a certain random walk.}\MNOTE[green]{Change B.6}
\TB{The matrix $P$ is isomorphic to the transition matrix of the isotropic random walk.}
Indeed, the matrix $P$ is in some sense a normalized adjacency matrix of a graph. Note that the \emph{adjacency matrix} $A=A(\G) \in \MB{C}^{V \times V}$ of a graph $\G = (V,E)$ is defined by
\[ A_{x,y} = \begin{cases}
1 \qquad &\text{if $\{ x,y \} \in E$,} \\
0 \qquad &\text{otherwise.}
\end{cases} \]
If a graph is regular, then the two matrices are closely related:

\begin{lem}
Let $\G$ be a $k$-regular graph,
and let $A$ and $P$ be the adjacency matrix and the discriminant of $\G$, respectively.
Then we have $P = \frac{1}{k}A$.
%Therefore, the absolute values of eigenvalues of $P$ does not exceed $1$.
\end{lem}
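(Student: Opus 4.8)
The plan is to compute the entries of $P = dRd^*$ directly from the definitions and show that each equals $\frac{1}{k}A_{x,y}$. Since $\G$ is $k$-regular, the boundary matrix simplifies to $d_{x,a} = \frac{1}{\sqrt{k}}\delta_{x,t(a)}$, and because $d$ is real its adjoint is $(d^*)_{b,y} = d_{y,b} = \frac{1}{\sqrt{k}}\delta_{y,t(b)}$. First I would expand the triple matrix product for a fixed pair $x,y \in V$:
\[
P_{x,y} = \sum_{a,b \in \MC{A}} d_{x,a}\, R_{a,b}\, (d^*)_{b,y}
= \frac{1}{k}\sum_{a,b \in \MC{A}} \delta_{x,t(a)}\,\delta_{a,b^{-1}}\,\delta_{y,t(b)}.
\]

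The key step is to collapse the sum over $b$ using $R_{a,b} = \delta_{a,b^{-1}}$, which forces $b = a^{-1}$, leaving a single sum over arcs $a$. Here I would invoke the identity $t(a^{-1}) = o(a)$, which is immediate from the definition of the inverse arc, to rewrite the result as
\[
P_{x,y} = \frac{1}{k}\sum_{a \in \MC{A}} \delta_{x,t(a)}\,\delta_{y,o(a)}.
\]
The summand is nonzero precisely for the arc $a = (y,x)$, and such an arc exists in $\MC{A}$ if and only if $\{x,y\} \in E$; in that case it is unique. Thus the sum counts exactly one arc when $\{x,y\}$ is an edge and none otherwise (in particular it vanishes when $x = y$, since $\G$ is simple).

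Finally I would conclude that $P_{x,y} = \frac{1}{k}\delta_{\{x,y\} \in E} = \frac{1}{k}A_{x,y}$ for every pair $x,y$, which gives $P = \frac{1}{k}A$. This argument is essentially a routine unwinding of definitions, so there is no substantial obstacle; the only point requiring minor care is correctly handling the adjoint $d^*$ together with the arc-reversal relation $t(a^{-1}) = o(a)$, ensuring that the surviving arc is $(y,x)$ rather than $(x,y)$. This asymmetry is harmless here because $A$ is symmetric, but it must be tracked to identify the correct entry.
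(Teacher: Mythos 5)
Your proof is correct: the entrywise expansion of $P = dRd^*$, the collapse of the double sum via $b = a^{-1}$, the identity $t(a^{-1}) = o(a)$, and the identification of the unique surviving arc $(y,x)$ (including the vanishing diagonal for simple graphs) all check out. The paper itself supplies no proof---it calls the lemma straightforward and defers to a more general claim in \cite{kubota2021quantum}---and your direct verification is precisely the routine computation it alludes to.
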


This can be verified straightforwardly.
A more general claim and its proof can be found
in Theorem~3.1 of \cite{kubota2021quantum} and Proposition~3.3 of \cite{kubota2021quantum}.
Since the discriminant $P$ is a normal matrix,
it has a spectral decomposition.
Let $\lambda_1, \dots, \lambda_s$ be the distinct eigenvalues of $P$,
and let $W_i$ be the eigenspace associated to $\lambda_i$.
Let $d_i = \dim W_i$,
and let $u^{(i,1)}, \dots, u^{(i,d_i)}$ be an \TR{orthogonal }\TB{orthonormal}\MNOTE{Change A.2} basis of $W_i$.
Then, the matrix
\begin{equation} \label{1225-1}
E_i = \sum_{j=1}^{d_i} u^{(i,j)} (u^{(i,j)})^*
\end{equation}
is called the \emph{projection matrix} associated to the eigenvalue $\lambda_i$,
and it satisfies 
\begin{align}
P &= \sum_{i=1}^{s} \lambda_i E_i, \notag \\
E_i E_j &= \delta_{i,j} E_i \label{E1}, \\
E_i^* &= E_i, \label{E2} \\
\sum_{i=1}^s E_i &= I. \label{E3}
\end{align}
%\begin{equation} \label{0804-1}
%$P = \sum_{i=1}^{s} \lambda_i E_i$,
%\end{equation}
%and the projection matrices satisfy
%\begin{equation} \label{E1}
%E_i E_j = \delta_{i,j} E_i,
%\end{equation}
%\begin{equation} \label{E2}
%E_i^* = E_i,
%\end{equation}
%and
%\begin{equation} \label{E3}
%\sum_{i=1}^s E_i = I.
%\end{equation}
See \cite{meyer2000matrix} for the spectral decomposition of matrices.
Using the above properties, the following can be obtained.
%In this study, we use polynomials of $P$.
%The following can be obtained by using Equality~(\ref{0804-2}).

\begin{lem} \label{1203-1}
Let $M$ be a normal matrix with the spectral decomposition $M = \sum_{i=1}^{s} \lambda_i E_i$,
and let $p(x)$ be a polynomial.
Then we have $p(M) = \sum_{i=1}^{s} p(\lambda_i) E_i$.
\end{lem}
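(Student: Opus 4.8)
The plan is to reduce the claim to the single fact that $M^k = \sum_{i=1}^s \lambda_i^k E_i$ for every nonnegative integer $k$, and then extend to arbitrary polynomials by linearity. The only structural inputs needed are the orthogonality relation \eqref{E1} and the completeness relation \eqref{E3}; the spectral decomposition $M = \sum_{i=1}^s \lambda_i E_i$ itself supplies the base of the induction.

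First I would prove the power formula by induction on $k$. For $k=0$ the claim reads $I = \sum_{i=1}^s E_i$, which is exactly \eqref{E3} (adopting the convention $\lambda_i^0 = 1$). For the inductive step, assuming $M^k = \sum_{i=1}^s \lambda_i^k E_i$, I would compute
\[
M^{k+1} = M^k M = \left( \sum_{i=1}^s \lambda_i^k E_i \right)\left( \sum_{j=1}^s \lambda_j E_j \right) = \sum_{i=1}^s \sum_{j=1}^s \lambda_i^k \lambda_j E_i E_j.
\]
Applying \eqref{E1}, every cross term with $i \neq j$ vanishes while $E_i E_i = E_i$, so the double sum collapses to $\sum_{i=1}^s \lambda_i^{k+1} E_i$, completing the induction.

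Finally I would write $p(x) = \sum_{k=0}^n c_k x^k$ and use linearity together with the power formula:
\[
p(M) = \sum_{k=0}^n c_k M^k = \sum_{k=0}^n c_k \sum_{i=1}^s \lambda_i^k E_i = \sum_{i=1}^s \left( \sum_{k=0}^n c_k \lambda_i^k \right) E_i = \sum_{i=1}^s p(\lambda_i) E_i,
\]
which is the assertion. I do not anticipate any genuine obstacle here, as the result is entirely standard. The only point meriting a moment's care is the constant term of $p$, that is, the $k=0$ case, where one must invoke completeness \eqref{E3} rather than idempotency \eqref{E1}; this is why the full resolution $\sum_i E_i = I$, and not merely the orthogonality of the projections, is essential.
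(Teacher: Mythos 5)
Your proof is correct, and it is exactly the argument the paper intends: the paper states this lemma without proof, remarking only that it follows from the listed properties of the spectral decomposition, and your induction on powers using the orthogonality relation~(\ref{E1}) together with the completeness relation~(\ref{E3}) is the standard way to fill in that gap. Your closing observation that the constant term forces the use of $\sum_i E_i = I$ rather than idempotency alone is a correct and worthwhile point of care.
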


\section{Automorphisms of graphs} \label{s3}

In this section, we discuss relationships between symmetry of graphs and perfect state transfer.
For a wide readership,
we begin by introducing automorphisms of graphs and their associated permutation matrices.
Let $\G = (V, E)$ be a graph.
A mapping $\aut: V \to V$ is an \emph{automorphism} of $\G$
if $\aut$ is bijective, and $\{x,y\} \in E$
if and only if $\{\aut(x), \aut(y) \} \in E$.
We denote the set of all automorphisms of $\G$ by $\Aut(\G)$,
and let $\Aut(\G)_x = \{ \aut \in \Aut(\G) \mid \aut(x) = x \}$
for $x \in V$.
We note that automorphisms preserve the degrees of vertices,
i.e., 
\begin{equation} \label{1121-4}
\deg \aut(x) = \deg x
\end{equation}
for any $x \in V$ \TB{and $\aut \in \Aut(\Gamma)$}.\MNOTE{Change A.3}
See Lemma~1.3.1 in \cite{godsil2013} for its proof.

Let $\aut$ be an automorphism of a graph $\G$.
Since $\aut$ is bijective,
it defines the permutation matrix $M_{\aut} \in \MB{C}^{V \times V}$ given by $(M_{\aut})_{x,y} = \delta_{x, \aut(y)}$.
Basic properties of permutation matrices are
%\begin{lem}
%{\it
%With the above notation, we have
%\begin{enumerate}
%$M_{\aut}$ is an orthogonal matrix, i.e.,
\begin{equation} \label{1121-1}
M_{\aut}^\top = M_{\aut}^{-1} = M_{\aut^{-1}},
\end{equation}
and
\begin{equation} \label{1121-2}
M_{\aut}e_x = e_{\aut(x)}
\end{equation}
for any $x \in V$ \TB{and $\aut \in \Aut(\Gamma)$}.\MNOTE{Change A.3}
%\end{enumerate}
%}
%\end{lem}

%For the proof,
%it can be computed based on the definition of permutation matrices
%and hence we omit its details.
The action of automorphisms of a graph is naturally extended to the set of symmetric arcs.
Let $\aut$ be an automorphism of a graph $\G$,
and let $\MC{A} = \MC{A}(\G)$ be the symmetric arc set.
Define $\tilde{\aut}: \MC{A} \to \MC{A}$ by
$\tilde{\aut}(\left(x,y)\right) = (\aut(x), \aut(y))$.
Cleary, $\tilde{\aut}$ is a bijection.
Thus, the permutation matrix $N_{\tilde{\aut}} \in \MB{C}^{\MC{A} \times \MC{A}}$ is defined by $(N_{\tilde{\aut}})_{a,b} = \delta_{a, \tilde{\aut}(b)}$,
and it holds that 
\begin{equation} \label{proN}
N_{\tilde{\aut}}^{\top} = N_{\tilde{\aut}}^{-1} = N_{\tilde{\aut}^{-1}}
\end{equation}
as in Equality~(\ref{1121-1}).
Moreover, we have
\begin{equation} \label{1121-3}
N_{\tilde{\aut}}\TR{e_{a}}\TB{\e_a}\MNOTE[green]{Change B.7} = \TR{e_{\tilde{\aut}(a)}}\TB{\e_{\tilde{\aut}(a)}}\MNOTE[green]{Change B.7}
\end{equation}
for $a \in \MC{A}$ \TB{and $g \in \Aut(\Gamma)$}\MNOTE{Change~A.3}.
%Note that 
%\begin{equation} \label{1121-5}
%t(\tilde{\aut}(a)) = \aut(t(a))
%\end{equation}
%for $a \in \MC{A}$.
The following claims are relationships between permutation matrices defined by automorphisms and matrices associated with Grover walks,
which plays an important role in this study.

\begin{lem} \label{1122-3}
Let $\aut$ be an automorphism of a graph $\G$.
Then we have $d^* M_{\aut} = N_{\tilde{\aut}}d^*$.
\end{lem}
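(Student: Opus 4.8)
The plan is to prove the matrix identity $d^* M_{\aut} = N_{\tilde{\aut}}d^*$ by comparing the two sides entrywise. Since both sides are matrices in $\MB{C}^{\MC{A} \times V}$, I would fix an arc $a \in \MC{A}$ and a vertex $x \in V$ and compute the $(a,x)$ entry of each side. The left-hand side expands as
\[
(d^* M_{\aut})_{a,x} = \sum_{z \in V} (d^*)_{a,z}(M_{\aut})_{z,x} = \sum_{z \in V} \overline{d_{z,a}}\,\delta_{z,\aut(x)} = \overline{d_{\aut(x),a}} = \frac{1}{\sqrt{\deg \aut(x)}}\,\delta_{\aut(x),t(a)},
\]
using the definitions of the boundary matrix and the permutation matrix $M_{\aut}$, and noting that $d$ is real-valued.

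For the right-hand side, I would similarly compute
\[
(N_{\tilde{\aut}}d^*)_{a,x} = \sum_{b \in \MC{A}} (N_{\tilde{\aut}})_{a,b}(d^*)_{b,x} = \sum_{b \in \MC{A}} \delta_{a,\tilde{\aut}(b)}\,\overline{d_{x,b}} = \overline{d_{x,\tilde{\aut}^{-1}(a)}} = \frac{1}{\sqrt{\deg x}}\,\delta_{x,t(\tilde{\aut}^{-1}(a))}.
\]
The two expressions must then be reconciled. The degree factors match because automorphisms preserve degree: by Equality~(\ref{1121-4}) we have $\deg \aut(x) = \deg x$. For the Kronecker deltas, the key observation is that the extended arc map respects termini, namely $t(\tilde{\aut}(b)) = \aut(t(b))$, which follows immediately from the definition $\tilde{\aut}((u,v)) = (\aut(u),\aut(v))$. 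Writing $a = \tilde{\aut}(b)$, i.e. $b = \tilde{\aut}^{-1}(a)$, this gives $t(a) = \aut(t(\tilde{\aut}^{-1}(a)))$, so that $\delta_{\aut(x),t(a)} = \delta_{\aut(x),\aut(t(\tilde{\aut}^{-1}(a)))} = \delta_{x,t(\tilde{\aut}^{-1}(a))}$, the last equality holding since $\aut$ is a bijection. This shows the two entries agree.

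The main obstacle, if any, is purely bookkeeping: one must be careful with the conjugate-transpose when writing out entries of $d^*$ and with the direction of the permutation (whether $\tilde{\aut}$ or $\tilde{\aut}^{-1}$ appears) so that the Kronecker deltas are matched correctly. There is no genuine analytic difficulty; the identity is a compatibility statement expressing that the boundary map $d$ intertwines the vertex-level and arc-level actions of the automorphism, and it reduces to the two elementary facts that $\aut$ preserves degrees and that $\tilde{\aut}$ commutes with the terminus map $t$. Once the entrywise computation is carried out and these two facts are invoked, the equality $d^* M_{\aut} = N_{\tilde{\aut}}d^*$ follows. An alternative, slightly slicker route would be to verify the adjoint identity $M_{\aut}^* d = d\, N_{\tilde{\aut}}^*$, equivalently (using (\ref{1121-1}) and (\ref{proN})) $M_{\aut^{-1}} d = d\, N_{\tilde{\aut}^{-1}}$, by checking $M_{\aut^{-1}} d\, \e_b = d\, N_{\tilde{\aut}^{-1}} \e_b$ against the action on basis arc-vectors via (\ref{1121-2}) and (\ref{1121-3}); but the direct entrywise comparison above is the most transparent.
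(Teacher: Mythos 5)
Your proposal is correct and takes essentially the same route as the paper: both are entrywise verifications that reduce the identity to degree preservation (Equality~(\ref{1121-4})), the compatibility $t(\tilde{\aut}(a)) = \aut(t(a))$ of the arc-level action with the terminus map, and the injectivity of $\aut$. The only cosmetic difference is that the paper checks $N_{\tilde{\aut}}^{-1} d^* M_{\aut} = d^*$ in a single entry computation, whereas you expand the $(a,x)$ entries of $d^* M_{\aut}$ and $N_{\tilde{\aut}} d^*$ separately and reconcile them.
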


\begin{proof}
We want to show that $N_{\tilde{\aut}}^{-1} d^* M_{\aut} = d^*$.
For $a \in \MC{A}(\G)$ and $x \in V(\G)$,
we have
\begin{align*}
(N_{\tilde{\aut}}^{-1} d^* M_{\aut})_{a,x}
&= \TR{e_{a}}\TB{\e_a}\MNOTE[green]{Change B.7}^{\top} (N_{\tilde{\aut}}^{-1} d^* M_{\aut}) e_x \\
&= (N_{\tilde{\aut}} \TR{e_{a}}\TB{\e_a}\MNOTE[green]{Change B.7})^{\top} d^* (M_{\aut} e_x) \tag{by (\ref{proN})} \\
&= \TR{e_{\tilde{\aut}(a)}}\TB{\e_{\tilde{\aut}(a)}}\MNOTE[green]{Change B.7}^{\top} d^* e_{\aut(x)} \tag{by (\ref{1121-2}) and (\ref{1121-3})} \\
&= (d^*)_{\tilde{\aut}(a), \aut(x)} \\
&= d_{\aut(x), \tilde{\aut}(a)} \\
&= \frac{1}{\sqrt{\deg \aut(x)}} \delta_{\aut(x), t(\tilde{\aut}(a))} \\
&= \frac{1}{\sqrt{\deg x}} \delta_{\aut(x), \aut(t(a))} \tag{by (\ref{1121-4})} \\
&= \frac{1}{\sqrt{\deg x}} \delta_{x, t(a)} \\
&= d_{x,a} \\
&= (d^*)_{a, x}.
\end{align*}
Thus, we have $N_{\tilde{\aut}}^{-1} d^* M_{\aut} = d^*$, i.e., $d^* M_{\aut} = N_{\tilde{\aut}}d^*$.
\end{proof}

\begin{lem} \label{1122-1}
Let $\aut$ be an automorphism of a graph $\G$.
Then we have $U N_{\tilde{\aut}} = N_{\tilde{\aut}} U$.
\end{lem}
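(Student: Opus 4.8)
The plan is to exploit the factorization $U = R(2d^*d - I_{\MC{A}})$ and reduce the problem to showing that $N_{\tilde{\aut}}$ commutes with each of the two building blocks $R$ and $d^*d$ separately; since $N_{\tilde{\aut}}$ trivially commutes with $I_{\MC{A}}$, commutation with $U$ then follows by a one-line algebraic manipulation. Concretely, once I have $N_{\tilde{\aut}}R = RN_{\tilde{\aut}}$ and $N_{\tilde{\aut}}d^*d = d^*d\,N_{\tilde{\aut}}$, I compute
\[
N_{\tilde{\aut}}U = N_{\tilde{\aut}}R(2d^*d - I_{\MC{A}}) = R\,N_{\tilde{\aut}}(2d^*d - I_{\MC{A}}) = R(2d^*d - I_{\MC{A}})N_{\tilde{\aut}} = U N_{\tilde{\aut}},
\]
which is exactly the claim.

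The first commutation, $N_{\tilde{\aut}}R = RN_{\tilde{\aut}}$, I would verify entrywise. Using $R_{a,b} = \delta_{a,b^{-1}}$ and $(N_{\tilde{\aut}})_{a,b} = \delta_{a, \tilde{\aut}(b)}$, one finds $(R N_{\tilde{\aut}})_{a,b} = \delta_{a, \tilde{\aut}(b)^{-1}}$ and $(N_{\tilde{\aut}} R)_{a,b} = \delta_{a, \tilde{\aut}(b^{-1})}$. These agree because $\tilde{\aut}$ respects arc reversal: writing $b = (x,y)$ gives $\tilde{\aut}(b^{-1}) = \tilde{\aut}((y,x)) = (\aut(y),\aut(x)) = (\aut(x),\aut(y))^{-1} = \tilde{\aut}(b)^{-1}$. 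For the second commutation I use Lemma~\ref{1122-3} in both its stated and adjoint forms. The lemma gives $N_{\tilde{\aut}}d^* = d^* M_{\aut}$ directly. Taking the conjugate transpose of the instance of Lemma~\ref{1122-3} applied to $\aut^{-1}$, namely $d^* M_{\aut^{-1}} = N_{\tilde{\aut}^{-1}}d^*$, and invoking the permutation-matrix identities (\ref{1121-1}) and (\ref{proN}), yields $M_{\aut}d = d\,N_{\tilde{\aut}}$. Combining the two, $N_{\tilde{\aut}}d^*d = d^* M_{\aut} d = d^* d\, N_{\tilde{\aut}}$, as required.

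I do not expect a serious obstacle here: the proof is essentially bookkeeping, and Lemma~\ref{1122-3} does the substantive work. The only point demanding a little care is producing the companion identity $M_{\aut}d = d\,N_{\tilde{\aut}}$ from Lemma~\ref{1122-3}, since the latter is phrased only for $d^*$; the clean way is to pass through $\aut^{-1}$ and take adjoints rather than re-deriving it entrywise. If one preferred to avoid the adjoint argument, an alternative is to establish $N_{\tilde{\aut}}d^*d N_{\tilde{\aut}}^{-1} = d^* d$ by writing $N_{\tilde{\aut}}d^* = d^*M_{\aut}$ and $d N_{\tilde{\aut}}^{-1} = M_{\aut}^{-1}d$, but the route above is shorter and reuses the structural identities already recorded in the excerpt.
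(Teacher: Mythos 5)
Your proposal is correct and takes essentially the same route as the paper: the paper also factors $U = R(2d^*d - I_{\MC{A}})$, verifies $N_{\tilde{\aut}}R = RN_{\tilde{\aut}}$ entrywise, and then uses Lemma~\ref{1122-3} (together with $M_{\aut}^*M_{\aut} = I_V$) to dispose of the $d^*d$ factor via $N_{\tilde{\aut}}^* U N_{\tilde{\aut}} = R(2d^*M_{\aut}^*M_{\aut}d - I_{\MC{A}}) = U$. Your derivation of the companion identity $M_{\aut}d = dN_{\tilde{\aut}}$ by applying Lemma~\ref{1122-3} to $\aut^{-1}$ and taking adjoints is just the transposed form of the same step, so the two arguments differ only in bookkeeping.
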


\begin{proof}
\TR{
We want to show that $N_{\tilde{\aut}}^{-1} U N_{\tilde{\aut}} = U$.
For $a, b \in \MC{A}(\G)$, we have
\begin{align*}
(N_{\tilde{\aut}}^{-1} U N_{\tilde{\aut}})_{a,b}
&= e_{a}^{\top} (N_{\tilde{\aut}}^{-1} U N_{\tilde{\aut}}) e_{b} \\
&= (N_{\tilde{\aut}} e_{a})^{\top} U (N_{\tilde{\aut}} e_{b}) \tag{by (\ref{proN})} \\
&= e_{\tilde{\aut}(a)}^{\top} U e_{\tilde{\aut}(b)} \tag{by (\ref{1121-3})} \\
&= U_{\tilde{\aut}(a), \tilde{\aut}(b)} \\
&= \frac{2}{\deg(t(\tilde{\aut}(b))} \delta_{o(\tilde{\aut}(a)), t(\tilde{\aut}(b))} - \delta_{\tilde{\aut}(a)^{-1}, \tilde{\aut}(b)} \tag{by (\ref{1223-1})} \\
&= \frac{2}{\deg(\aut(t(b))} \delta_{\aut(o(a)), \aut(t(b))} - \delta_{\tilde{\aut}(a^{-1}), \tilde{\aut}(b)} \\
&= \frac{2}{\deg(t(b))} \delta_{o(a), t(b)} - \delta_{a^{-1}, b} \tag{by (\ref{1121-4})} \\
&= U_{a,b}.
\end{align*}
Thus, we have $N_{\tilde{\aut}}^{-1} U N_{\tilde{\aut}} = U$,
i.e., $U N_{\tilde{\aut}} = N_{\tilde{\aut}} U$.
}
\TB{
Let $a, b \in \MC{A}(\Gamma)$.
In a similar way as in the proof of Lemma~\ref{1122-3}, we have
\begin{align*}
    (N_{\tilde{\aut}}^{-1} RN_{\tilde{\aut}})_ {a,b} 
%    &= \e_a^\top (N_{\tilde{\aut}}^{-1} RN_{\tilde{\aut}}) \e_b \\
%    &= (N_{\tilde{\aut}} \e_a)^\top R (N_{\tilde{\aut}} \e_b) \tag{by~\eqref{proN}} \\
%    &= \e_{\tilde{\aut}(a)}^\top R \e_{\tilde{\aut}(b)} \tag{by~\eqref{1121-3}}\\
    = R_{\tilde{\aut}(a), \tilde{\aut}(b)} 
    = \delta_{\tilde{\aut}(a), \tilde{\aut}(b)^{-1}} 
    = \delta_{a, b^{-1}}
    = R_{a,b}. 
\end{align*}
This means
$
    N_{\tilde {\aut}} R = R N_{\tilde{\aut}}.
$
Since $U = R(2d^*d - I_A)$, we have 
\begin{align*}
    N_{\tilde{\aut}}^* U N_{\tilde{\aut}} & =R  N_{\tilde{\aut}}^*\left(2 d^* d-I_A\right) N_{\tilde{\aut}} \\
    & =R\left(2 d^* M_\aut^* M_\aut d-I_A\right)    \tag{by Lemma~\ref{1122-3}} \\
    & =U.
\end{align*}
Thus, we have $U N_{\tilde{\aut}} = N_{\tilde{\aut}} U$.\MNOTE[green]{Change B.8}
}
\end{proof}

\TR{In the following,
we prove the second main theorem, Theorem~\ref{second}, provided in Section~\ref{s1}.
Indeed, the first and second claims in Theorem~\ref{second} are verified by Theorem~\ref{1124-1} and Corollary~\ref{1124-3}, respectively.
The first asserts, in brief,
that automorphisms preserve the occurrence of perfect state transfer between vertex type states.
}
%which can be explained colloquially as
%automorphisms preserve the occurrence of perfect state transfer between vertex type states.
\TB{These lemmas imply that automorphisms preserve the occurrence of perfect state transfer between vertex type states.}

\begin{thm} \label{1124-1}
Let $\aut$ be an automorphism of a graph $\G = (V, E)$,
and let $x,y \in V$.
If
\begin{equation} \label{1122-2}
U^{\tau}d^*e_x = \gamma d^*e_y
\end{equation}
for a positive integer $\tau$ and a complex number $\gamma$,
then we have $U^{\tau}d^*e_{\aut(x)} = \gamma d^*e_{\aut(y)}$.
In particular,
if perfect state transfer occurs from $d^*e_x$ to $d^*e_y$ on $\G$ at time $\tau$,
then so does from $d^*e_{\aut(x)}$ to $d^*e_{\aut(y)}$.
\end{thm}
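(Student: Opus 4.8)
The plan is to apply the arc permutation matrix $N_{\tilde{\aut}}$ to both sides of the hypothesis (\ref{1122-2}) and then transport it across the expression using the two intertwining relations established just above, namely $U N_{\tilde{\aut}} = N_{\tilde{\aut}} U$ (Lemma~\ref{1122-1}) and $N_{\tilde{\aut}} d^* = d^* M_{\aut}$ (Lemma~\ref{1122-3}). The underlying idea is that conjugating by $N_{\tilde{\aut}}$ converts a statement about the pair $x, y$ into the corresponding statement about $\aut(x), \aut(y)$, precisely because $M_{\aut} e_x = e_{\aut(x)}$ by (\ref{1121-2}).

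Concretely, I would first multiply (\ref{1122-2}) on the left by $N_{\tilde{\aut}}$ to obtain $N_{\tilde{\aut}} U^{\tau} d^* e_x = \gamma N_{\tilde{\aut}} d^* e_y$. On the left-hand side, since $N_{\tilde{\aut}}$ commutes with $U$ by Lemma~\ref{1122-1}, it commutes with every power $U^{\tau}$, so $N_{\tilde{\aut}} U^{\tau} = U^{\tau} N_{\tilde{\aut}}$; then Lemma~\ref{1122-3} together with (\ref{1121-2}) gives $U^{\tau} N_{\tilde{\aut}} d^* e_x = U^{\tau} d^* M_{\aut} e_x = U^{\tau} d^* e_{\aut(x)}$. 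On the right-hand side, the same two facts yield $\gamma N_{\tilde{\aut}} d^* e_y = \gamma d^* M_{\aut} e_y = \gamma d^* e_{\aut(y)}$. Equating the two sides produces the desired identity $U^{\tau} d^* e_{\aut(x)} = \gamma d^* e_{\aut(y)}$.

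For the ``in particular'' clause, I would additionally verify that the conclusion really describes perfect state transfer between two distinct vertex type states. Each $d^* e_z$ has unit norm, since $\| d^* e_z \|^2 = e_z^* d d^* e_z = e_z^* e_z = 1$ by (\ref{1124-2}); hence $d^* e_{\aut(x)}$ and $d^* e_{\aut(y)}$ are states, and the scalar $\gamma$ carried over from the hypothesis still has norm one. Distinctness follows because $\aut$ is a bijection, so $x \neq y$ forces $\aut(x) \neq \aut(y)$, and $d^*$ is injective (again because $d d^* = I_V$), whence $d^* e_{\aut(x)} \neq d^* e_{\aut(y)}$.

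I do not expect any genuine obstacle: the argument is a short manipulation once Lemmas~\ref{1122-3} and~\ref{1122-1} are available, and the only point requiring a moment's care is confirming that unit norm and distinctness of the transported states are preserved, both of which are immediate from $d d^* = I_V$ and the bijectivity of $\aut$.
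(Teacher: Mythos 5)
Your proposal is correct and follows essentially the same route as the paper: both rest on the intertwining relations $d^*M_{\aut}=N_{\tilde{\aut}}d^*$ (Lemma~\ref{1122-3}) and $UN_{\tilde{\aut}}=N_{\tilde{\aut}}U$ (Lemma~\ref{1122-1}) together with $M_{\aut}e_x=e_{\aut(x)}$, the only cosmetic difference being that you multiply the hypothesis by $N_{\tilde{\aut}}$ while the paper rewrites $U^{\tau}d^*e_{\aut(x)}$ directly. Your extra verification that the transported states remain unit-norm and distinct (via $dd^*=I_V$ and bijectivity of $\aut$) is a sound addition that the paper leaves implicit.
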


\begin{proof}
Indeed,
\begin{align*}
U^{\tau}d^*e_{\aut(x)}
&= U^{\tau}d^* M_{\aut} e_{x} \tag{by (\ref{1121-2})} \\
&= U^{\tau} N_{\tilde{\aut}} d^* e_{x} \tag{by Lemma~\ref{1122-3}} \\
&= N_{\tilde{\aut}} U^{\tau} d^* e_{x} \tag{by Lemma~\ref{1122-1}} \\
&= N_{\tilde{\aut}} \gamma d^* e_{y} \tag{by (\ref{1122-2})} \\
&= \gamma N_{\tilde{\aut}} d^* e_{y} \\
&= \gamma d^* M_{\aut} e_{y} \tag{by Lemma~\ref{1122-3}} \\
&= \gamma d^* e_{\aut(y)} \tag{by (\ref{1121-2})}.
\end{align*}
\end{proof}

As a consequence of the above theorem,
a necessary condition for perfect state transfer to occur between vertex type states $d^*e_x$ and $d^*e_y$ can be formulated in terms of automorphism groups.
In short, an automorphism that fixes the vertex $x$ must also fix the vertex $y$.

\begin{cor} \label{1124-3}
Let $\aut$ be an automorphism of a graph $\G = (V, E)$,
and let $x,y \in V$.
If perfect state transfer occurs from $d^*e_x$ to $d^*e_y$ on $\G$,
then we have $\Aut(\G)_x = \Aut(\G)_y$.
\end{cor}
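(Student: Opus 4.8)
The plan is to use Theorem~\ref{1124-1} as the main engine, together with two structural facts about the matrices involved. First, the map $x \mapsto d^* e_x$ is injective: since $dd^* = I_V$ by Equality~(\ref{1124-2}), the matrix $d^*$ admits the left inverse $d$, so $d^* e_p = d^* e_q$ forces $e_p = e_q$ and hence $p = q$. Second, $U^\tau$ is invertible, which follows from the unitarity of $U$; this in turn holds because $R$ is a real symmetric involution and $2d^*d - I_{\MC{A}}$ is a self-adjoint involution (as $d^*d$ is an orthogonal projection, again thanks to $dd^* = I_V$), so $U$ is a product of orthogonal matrices.

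Assume perfect state transfer occurs from $d^* e_x$ to $d^* e_y$, so that $U^\tau d^* e_x = \gamma d^* e_y$ for some positive integer $\tau$ and some $\gamma$ of norm one; in particular $\gamma \neq 0$. I would prove the two inclusions separately. For $\Aut(\G)_x \subseteq \Aut(\G)_y$, take $\aut \in \Aut(\G)_x$, so $\aut(x) = x$. Applying Theorem~\ref{1124-1} yields $U^\tau d^* e_{\aut(x)} = \gamma d^* e_{\aut(y)}$, and since $\aut(x) = x$ the left-hand side equals $U^\tau d^* e_x = \gamma d^* e_y$. Cancelling $\gamma$ gives $d^* e_{\aut(y)} = d^* e_y$, and injectivity of $x \mapsto d^* e_x$ forces $\aut(y) = y$, i.e.\ $\aut \in \Aut(\G)_y$.

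For the reverse inclusion $\Aut(\G)_y \subseteq \Aut(\G)_x$, take $\aut \in \Aut(\G)_y$, so $\aut(y) = y$. Theorem~\ref{1124-1} now gives $U^\tau d^* e_{\aut(x)} = \gamma d^* e_{\aut(y)} = \gamma d^* e_y = U^\tau d^* e_x$. Unlike the previous case, the hypothesis here constrains the image rather than the source, so I would use invertibility of $U^\tau$ to cancel it, obtaining $d^* e_{\aut(x)} = d^* e_x$ and hence $\aut(x) = x$ by injectivity. Combining the two inclusions gives $\Aut(\G)_x = \Aut(\G)_y$.

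The argument is short and there is no serious obstacle; the only point requiring care is the passage from an equality of vertex type states back to an equality of vertices, which is precisely where injectivity of $d^*$ (equivalently $dd^* = I_V$) is essential — without it the two stabilizers could in principle differ. A secondary point is that the two inclusions are not perfectly symmetric: one needs only cancellation of $\gamma$, whereas the other needs invertibility of $U^\tau$, so I would ensure the unitarity of $U$ is available (or explicitly cited) before running the second half.
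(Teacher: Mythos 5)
Your proposal is correct and takes essentially the same approach as the paper: Theorem~\ref{1124-1} combined with left-multiplication by $d$ and $dd^* = I_V$ (i.e.\ injectivity of $v \mapsto d^* e_v$) to pass from $d^* e_{\aut(y)} = d^* e_y$ back to $\aut(y) = y$. The paper dismisses the reverse inclusion with ``can be verified in a similar way,'' and your explicit observation that it additionally requires invertibility of $U^\tau$ --- which holds since $R$ and $2d^*d - I_{\MC{A}}$ are self-adjoint involutions, so $U$ is unitary --- correctly supplies the one detail the paper leaves implicit.
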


\begin{proof} %\sout{***}
%\sout{We note that although there are differences in symbols,
%Zhan shows that if perfect state transfer occurs from $d^*e_x$ to $d^*e_y$, then it also occurs from $d^*e_y$ to $d^*e_x$ in Corollary~5.4 of \cite{zhan2019infinite}.}
%\TR{(Should the notation distortion be mentioned?
%Alternatively, the reverse inclusion can be shown without %Zhan's result.)}
Let $\aut \in \Aut(\G)_x$,
and we want to show that $\aut(y) = y$.
Since $U^{\tau}d^*e_x = \gamma d^*e_y$ for a positive integer $\tau$ and a complex number $\gamma$, we have
\begin{align*}
d^* e_y &= \gamma^{-1} U^{\tau}d^*e_x \\
&= \gamma^{-1} U^{\tau}d^*e_{\aut(x)} \tag{by $\aut \in \Aut(\G)_x$} \\
&= \gamma^{-1} \gamma d^*e_{\aut(y)} \tag{by Theorem~\ref{1124-1}} \\
&= d^*e_{\aut(y)}.
\end{align*}
Multiply both sides by the matrix $d$.
Then by Equality~(\ref{1124-2}),
we have $\aut(y) = y$,
and hence $\aut \in \Aut(\G)_y$.
The reverse inclusion can be verified in a similar way.
\end{proof}
\TB{
In fact, the same statement holds true for continuous-time quantum walks as well.
See Lemma~4.1 in \cite{godsil2010can} for details.
}\MNOTE[black]{Change C.1}

\TB{
Using Corollary~\ref{1124-3}, we demonstrate in Example~\ref{ex:K3} how to verify that perfect state transfer does not occur between a specific pair of vertices. As this example shows, the corollary is useful if we know part of the automorphism group.\MNOTE[green]{Change B.9}
\begin{ex}  \label{ex:K3}
    Let $K_3$ be the complete graph of order $3$ with vertex set $[3]=\{0,1,2\}$ in Figure~\ref{fig:K3}.
    \begin{figure}[ht]
    \begin{center}
    \begin{tikzpicture} %1
    [scale = 0.6,
    every node/.style = {circle, fill = black, inner sep = 0.7mm}
    ]
    \node[label=right:$0$] (0) at (2,0) {};
    \node[label=left:$1$] (1) at (-1, 1.73) {};
    \node[label=left:$2$] (2) at (-1, -1.73) {};
    \draw[line width = 1pt] (0) -- (1) -- (2) -- (0);
    \end{tikzpicture}
    \end{center}
    \caption{The complete graph of order $3$ \label{fig:K3}}
\end{figure}
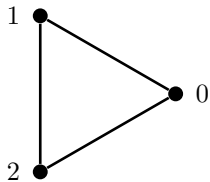
Let $g$ be the automorphism of $K_3$ defined by $g(0)=0$, $g(1)=2$ and $g(2)=1$.
Then, $g \in \Aut(K_3)_0$ and $g \not\in \Aut(K_3)_1$.
Hence, by Corollary~\ref{1124-3}, we see that perfect state transfer does not occur from $0$ to $1$.
\end{ex}
}
\TB{We will use Corollary~\ref{1124-3} to provide a necessary condition for perfect state transfer to occur. This assertion can also be interpreted as saying that the Grover walk captures symmetry of graphs through perfect state transfer. \MNOTE[green]{Change B.2. Slight changes from reviewer's suggestion}
}

\section{Circulant graphs}  \label{sec:circulant}
One of the main objects in this paper is circulant graphs,
which are Cayley graphs over the additive group $\MB{Z}_n = \MB{Z}/ n\MB{Z}$ of integers modulo $n$.
\TB{Here, we may assume that $n$ is a positive integer.}\MNOTE[black]{Change C.2}
If no confusion arises, we identify an integer $x \in \MB{Z}$ with the equivalent class $x + n\MB{Z} \in \MB{Z}_n$.

\TR{By using Corollary~\ref{1124-3},
we can determine two vertices in circulant graphs such that perfect state transfer occurs between those two vertices.}
\TB{
In this section, we present two useful lemmas for circulant graphs. 
Combining Corollary~\ref{1124-3} with some automorphisms of circulant graphs, we present Lemma~\ref{1214-3} to identify candidates for pairs of vertices in circulant graphs between which perfect state transfer can occur. 
After that, we introduce the concept of eigenvalue support to investigate whether perfect state transfer occurs. 
We then prove Lemma~\ref{1206-3}, which asserts that the eigenvalue support coincides with the set of eigenvalues for circulant graphs.}
\MNOTE{Change A.4: This paragraph is rewritten.}

We recall the definition and basic properties of circulant graphs.
Let $S$ be a subset of $\MB{Z}_n \setminus \{0\}$ such that
\begin{equation} \label{1206-1}
S = -S,
\end{equation}
i.e., $s \in S$ if and only if $-s \in S$.
The \emph{circulant graph} $X(\MB{Z}_n, S)$ is the graph with the vertex set $\MB{Z}_n$ and the edge set $\{ \{x,y\} \mid y-x \in S \}$.
In circulant graphs,
``rotations" and ``inversions" are automorphisms as claimed in the following lemma.

\begin{lem}[{pp.~8--9 \cite{godsil2013}}] \label{1212-1}
Let $X = X(\MB{Z}_n, S)$.
Then we have the following.
\begin{enumerate}
\item For $z \in \MB{Z}_n$, the mapping $\rho_z: \MB{Z}_n \to \MB{Z}_n$ defined by $\rho_z(x) = x+z$ is an automorphism of $X$.
\item The mapping $r: \MB{Z}_n \to \MB{Z}_n$ defined by
$r(x) = -x$ is an automorphism of $X$.
\end{enumerate}
\end{lem}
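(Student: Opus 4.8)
The plan is to verify directly that each of the two maps satisfies the definition of an automorphism given in Section~\ref{s3}: it must be bijective, and it must preserve adjacency in both directions. The only structural fact I would use is that the edge set of $X(\MB{Z}_n, S)$ consists precisely of the pairs $\{x,y\}$ with $y-x \in S$, and I would keep the symmetry hypothesis $S=-S$ from~(\ref{1206-1}) in reserve, since it is exactly what makes this adjacency relation well defined on unordered pairs and what the second map relies on.

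For part (i), I would first observe that $\rho_z$ is a bijection of $\MB{Z}_n$, because $\rho_{-z}$ is a two-sided inverse. The heart of the argument is the translation-invariance of differences: for any $x,y \in \MB{Z}_n$ we have $\rho_z(y) - \rho_z(x) = (y+z)-(x+z) = y-x$. Hence $\{\rho_z(x),\rho_z(y)\}$ is an edge if and only if $\rho_z(y)-\rho_z(x) = y-x$ lies in $S$, which happens if and only if $\{x,y\}$ is an edge. This gives the required equivalence in both directions, so $\rho_z \in \Aut(X)$.

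For part (ii), I would note that $r$ is an involution, $r(r(x)) = x$, and is therefore a bijection of $\MB{Z}_n$. For adjacency, the analogous computation gives $r(y)-r(x) = -y-(-x) = -(y-x)$. Thus $\{r(x),r(y)\}$ is an edge if and only if $-(y-x) \in S$, and here is the one place the symmetry condition is essential: by~(\ref{1206-1}) we have $-(y-x) \in S$ if and only if $y-x \in -S = S$, i.e.\ if and only if $\{x,y\}$ is an edge. Hence $r \in \Aut(X)$.

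There is no genuine obstacle in this lemma; it is a routine verification. The only point that requires care is that the edge condition $y-x \in S$ must be treated as symmetric in $x$ and $y$, which is guaranteed by $S=-S$, and it is precisely this hypothesis that is invoked in part (ii) rather than in part (i). I would therefore phrase the difference computations as strings of ``if and only if'' statements to make transparent that adjacency is preserved in both directions.
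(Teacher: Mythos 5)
Your proof is correct and is essentially the standard direct verification that the paper delegates to its citation (pp.~8--9 of \cite{godsil2013}): bijectivity via an explicit inverse, translation-invariance of differences for $\rho_z$, and the computation $r(y)-r(x)=-(y-x)$ combined with $S=-S$ for $r$. You also correctly isolate that the symmetry hypothesis~(\ref{1206-1}) is needed only for part~(ii); nothing further is required.
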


In considering perfect state transfer on circulant graphs between vertex type states, from Theorem~\ref{1124-1} we may assume that the initial state is $d^*e_0$ without loss of generality.
%\TB{Combining the above lemma and the discussion in Section~\ref{s3},}
Moreover, we obtain simple necessary conditions for perfect state transfer to occur on $X(\MB{Z}_n, S)$.

\begin{lem} \label{1214-3}
Let $X = X(\MB{Z}_n, S)$,
and let $x,y$ be distinct vertices of $X$.
If perfect state transfer occurs on $X$ from $d^*e_x$ to $d^*e_y$, then $n$ is even and $y = x+\frac{n}{2}$.
\end{lem}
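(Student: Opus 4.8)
The plan is to apply Corollary~\ref{1124-3} together with a well-chosen automorphism of $X$ that fixes $x$ and thereby forces a strong constraint on $y$. The natural candidate is the ``reflection through $x$'', namely the map $\aut \colon \MB{Z}_n \to \MB{Z}_n$ defined by $\aut(w) = 2x - w$. This is an automorphism of $X$ because it is the composition $\rho_{2x} \circ r$ of the inversion $r(w) = -w$ and the rotation $\rho_{2x}(w) = w + 2x$, both of which are automorphisms by Lemma~\ref{1212-1}. By construction $\aut(x) = 2x - x = x$, so $\aut \in \Aut(X)_x$.

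Now suppose perfect state transfer occurs from $d^*e_x$ to $d^*e_y$. By Corollary~\ref{1124-3} we have $\Aut(X)_x = \Aut(X)_y$, so $\aut \in \Aut(X)_y$ as well, which means $\aut(y) = y$. Writing this out gives $2x - y = y$ in $\MB{Z}_n$, equivalently $2(y - x) \equiv 0 \pmod{n}$.

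It remains to solve this congruence, and here is where the hypothesis $x \neq y$ enters. Set $d = y - x$, regarded as a residue in $\{1, \dots, n-1\}$; it is nonzero precisely because $y \neq x$. The condition $2(y - x) \equiv 0 \pmod{n}$ says $n \mid 2d$, and since $2d$ lies strictly between $0$ and $2n$, the only multiple of $n$ it can equal is $n$ itself, so $2d = n$. Hence $n$ is even and $d = \frac{n}{2}$, that is, $y = x + \frac{n}{2}$, as claimed. I do not expect a serious obstacle: once the reflection automorphism is identified, the conclusion follows from Corollary~\ref{1124-3} and an elementary congruence. The only point requiring care is to invoke $x \neq y$ to discard the trivial solution $d = 0$, which is exactly what forces $n$ to be even rather than merely allowing the degenerate case $y = x$.
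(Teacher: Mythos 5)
Your proof is correct and follows essentially the same route as the paper: both hinge on Corollary~\ref{1124-3} applied to an inversion-type automorphism, yielding $2(y-x)=0$ in $\MB{Z}_n$ and then solving that congruence using $x \neq y$. The only cosmetic difference is that the paper first translates by $\rho_{-x}$ (via Theorem~\ref{1124-1}) to reduce to the base point $0$ and then applies $r$, whereas you conjugate $r$ by the translation to obtain the reflection $w \mapsto 2x - w$ fixing $x$ directly --- these are trivially equivalent.
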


\begin{proof}
Suppose that perfect state transfer occurs from $d^*e_x$ to $d^*e_y$.
From $\rho_{-x} \in \Aut(X)$ and Theorem~\ref{1124-1},
perfect state transfer occurs from $d^*e_{\rho_{-x}(x)} = d^*e_{0}$ to $d^*e_{\rho_{-x}(y)} = d^*e_{y-x}$.
Also,
the automorphism $r$ fixes the vertex $0$,
so Corollary~\ref{1124-3} implies that $r$ also fixes the vertex $y-x$,
i.e., $y-x = r(y-x) = -(y-x)$.
We have
\begin{equation} \label{1128-1}
2(y-x) = 0.
\end{equation}
We note that the operations and equal signs here are on $\MB{Z}_n$.
Since $x \neq y$ in $\MB{Z}_n$, we see that $n$ is even.
\TR{
Next, we prove $y = x + \frac{n}{2}$.
Let $n=2l$.
By Equality~(\ref{1128-1}),
we have $2(y-x) = 0$ on $\MB{Z}_{2l}$,
i.e., $y-x \equiv 0 \pmod l$.
Thus, there exists an integer $k$ such that $y-x = kl$.
In particular, we have $y-x \in \{0, l\}$ on $\MB{Z}_{2l}$.
Since $x \neq y$, we obtain $y-x = l$.}
\TB{Furthermore, we have either $y-x = 0$ or $y-x = \frac{n}{2}$.
As $x \neq y$, we conclude $y = x+\frac{n}{2}$.}\MNOTE{Change A.5}
\end{proof}

Next, we describe the eigenvalues and eigenvectors of circulant graphs.
A general discussion on eigenvalues of Cayley graphs on abelian groups is given in Section~1.4.9 in~\cite{brouwer2011spectra}.
In this paper,
we avoid representation theoretic discussion and explicitly provide eigenvectors of circulant graphs.
%Let $\chi$ be a character of the circulant graph $X=X(\mathbb{Z}_n,S)$, that is, a map $\chi : \mathbb{Z}_n \to \mathbb{C}^*$ satisfying $\chi(x+y)=\chi(x)\chi(y)$
%for $x,y \in \MB{Z}_n$.
%A character can be regarded as a vector in $\MB{C}^{\MB{Z}_n}$,
%which is known to be 
%%The vector $(\chi(x))_{x \in \mathbb{Z}_n}$ is
%an eigenvector of the adjacency matrix $A(X)$ belonging to the eigenvalue $\chi(S) := \sum_{s \in S}\chi(S)$
%(See Section~1.4.9 in~\cite{brouwer2011spectra}).
%Furthermore, it is well-known in representation theory that the set of characters of $\mathbb{Z}_n$ is an orthogonal basis of $\mathbb{C}^{\mathbb{Z}_n}$.
%In this paper, we explicitly provide the eigenvectors of $X$ for later discussion.
\TB{Let $[n] = \{0,1, \dots, n-1\}$ for a positive integer $n$.}\MNOTE[green]{Change B.11}
Let $\zeta_{n} = e^{\frac{2\pi}{n}i}$.
For $j \in [n]$, define the vector
$u_j \in \MB{C}^{\MB{Z}_n}$ by
\begin{equation} \label{1212-2}
(u_j)_k = \zeta_n^{jk}
\end{equation}
for $k \in [n]$.

%Then, $u_k$ is an eigenvector of $A(X)$ belonging to the eigenvalue $\sum_{s \in S} \zeta_n^{ks}$.
%Furthermore, the eigenvectors $u_0, u_1, \dots, u_{n-1}$ are pairwise orthogonal.

\begin{lem}[Section~1.4.9 in~\cite{brouwer2011spectra}] \label{lem:ortho}
Let $X = X(\MB{Z}_n, S)$.
%and let $\zeta_{n} = e^{\frac{2\pi}{n}i}$.
%For $k \in \{0,\ldots,n-1\}$, define the vector
%$\TR{\chi_{k}} \in \MB{C}^{\MB{Z}_n}$ by $(\chi_k)_j = \zeta_n^{jk}$.
Then $u_j$ is an eigenvector of $A(X)$ \TR{belonging to}\TB{of}\MNOTE[green]{Change B.10} the eigenvalue $\sum_{s \in S} \zeta_n^{js}$.
Furthermore, the eigenvectors $u_0, u_1, \dots, u_{n-1}$ are pairwise orthogonal.
\end{lem}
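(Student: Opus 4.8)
The plan is to verify directly from the definition of the adjacency matrix that each $u_j$ is an eigenvector, and then to establish orthogonality by a standard geometric-series computation. Both parts are concrete calculations rather than structural arguments, so the main work is bookkeeping with the roots of unity $\zeta_n$.

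First I would compute the $k$-th entry of $A(X)u_j$. By the definition of the adjacency matrix, $A_{k,m}=1$ exactly when $m-k\in S$, so
\begin{align*}
(A(X)u_j)_k
&= \sum_{m\in\MB{Z}_n} A_{k,m}(u_j)_m
= \sum_{s\in S} (u_j)_{k+s}
= \sum_{s\in S}\zeta_n^{j(k+s)}
= \Bigl(\sum_{s\in S}\zeta_n^{js}\Bigr)\zeta_n^{jk}
= \Bigl(\sum_{s\in S}\zeta_n^{js}\Bigr)(u_j)_k.
\end{align*}
Here I would use the reindexing $m=k+s$ as $m$ ranges over the neighbours of $k$, which are exactly $\{k+s\mid s\in S\}$; this is legitimate because $S=-S$ guarantees the edge set is symmetric and the sum is over a well-defined set of neighbours. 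Since this identity holds for every $k\in[n]$, the vector $u_j$ is an eigenvector of $A(X)$ with eigenvalue $\sum_{s\in S}\zeta_n^{js}$, proving the first assertion.

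For orthogonality I would compute the Hermitian inner product of $u_j$ and $u_{j'}$ for $j\neq j'$ in $[n]$. Using $\overline{(u_{j'})_k}=\zeta_n^{-j'k}$, we get
\begin{align*}
\langle u_j, u_{j'}\rangle
= \sum_{k=0}^{n-1}(u_j)_k\overline{(u_{j'})_k}
= \sum_{k=0}^{n-1}\zeta_n^{(j-j')k}.
\end{align*}
Since $j\neq j'$ and $j,j'\in[n]$, the quantity $j-j'$ is not a multiple of $n$, so $\zeta_n^{j-j'}\neq 1$, and the finite geometric series sums to $\frac{\zeta_n^{(j-j')n}-1}{\zeta_n^{j-j'}-1}=0$ because $\zeta_n^{(j-j')n}=1$. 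Hence $\langle u_j,u_{j'}\rangle=0$, establishing pairwise orthogonality.

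I do not expect a serious obstacle here: both steps are routine, and the only point requiring care is the reindexing in the eigenvector computation, where one must invoke $S=-S$ (and the convention that vertices are labelled by $\MB{Z}_n$) to ensure the neighbours of $k$ are precisely $\{k+s\mid s\in S\}$ and that the summation index wraps correctly modulo $n$. The orthogonality argument is the classical statement that the characters of $\MB{Z}_n$ form an orthogonal system, so the geometric-series identity $\sum_{k=0}^{n-1}\zeta_n^{tk}=0$ for $t\not\equiv 0\pmod n$ is the entire content.
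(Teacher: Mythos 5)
Your proof is correct; note that the paper itself states this lemma without proof, citing Section~1.4.9 of Brouwer--Haemers, and your direct computation (reindexing the neighbour sum via $m = k+s$ and the geometric-series orthogonality of the characters $k \mapsto \zeta_n^{jk}$ of $\MB{Z}_n$) is exactly the standard argument behind that citation. The one point you rightly flag --- that $S = -S$ makes the edge set well-defined and the neighbours of $k$ precisely $\{k+s \mid s \in S\}$ --- is the only subtlety, and you handle it correctly.
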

%\begin{proof}
%For each $i=0,\ldots,n-1$, the map $\chi : \mathbb{Z}_n \to \mathbb{C}^*$ defined as $\chi(j)=\zeta_n^{ij}$ is a character of $\mathbb{Z}_n$.
%Hence the desired result follows immediately.
%\end{proof}

%\TR{(Eigenvalue supports)}
\begin{defi}
Let $M \in \MB{C}^{n \times n}$ be a normal matrix
with the spectral decomposition $M = \sum_{i=1}^{s} \lambda_i E_i$,
where $\lambda_1, \dots, \lambda_s$ are the distinct eigenvalues of $M$,
and $E_1, \ldots,E_s$ are the orthogonal projections associated to these eigenvalues, respectively.
We denote by $\spec(M)$ the set of \TR{the }\MNOTE{Change A.6}distinct eigenvalues of $M$.
For a vector $x \in \MB{C}^n$, we define $\Theta_M(x) = \{ \lambda_i \in \spec(M) \mid E_i x \neq 0 \}$.
This is called the \emph{eigenvalue support} of the vector $x$ with respect to $M$.
\end{defi}
Note that $\spec(M)$ is simply a set, so the multiplicities of the eigenvalues are ignored.

\begin{lem} \label{1206-3}
Let $X = X(\MB{Z}_n, S)$,
and let $A$ be the adjacency matrix of $X$.
Then we have $\Theta_A(e_0) = \spec(A)$.
\end{lem}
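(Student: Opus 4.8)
The plan is to show that the projection of $e_0$ onto every eigenspace of $A$ is nonzero, which is exactly the statement $\Theta_A(e_0) = \Spec(A)$. First I would recall from Lemma~\ref{lem:ortho} that the vectors $u_0, u_1, \dots, u_{n-1}$ defined by $(u_j)_k = \zeta_n^{jk}$ form an orthogonal basis of eigenvectors of $A$, with $u_j$ belonging to the eigenvalue $\lambda^{(j)} = \sum_{s \in S} \zeta_n^{js}$. Since these eigenvectors are pairwise orthogonal and each has squared norm $\|u_j\|^2 = \sum_{k \in [n]} |\zeta_n^{jk}|^2 = n$, the normalized vectors $\frac{1}{\sqrt{n}} u_j$ form an orthonormal eigenbasis.

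The key computation is to expand $e_0$ in this basis. Writing $e_0 = \sum_{j \in [n]} c_j \, u_j$, the coefficient is $c_j = \frac{\langle u_j, e_0 \rangle}{\|u_j\|^2}$, and since $(u_j)_0 = \zeta_n^{0} = 1$ for every $j$, the inner product $\langle u_j, e_0\rangle = \overline{(u_j)_0} = 1$. Hence $c_j = \frac{1}{n} \neq 0$ for all $j \in [n]$; in other words, $e_0$ has a nonzero component along every single eigenvector $u_j$.

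It remains to translate this into a statement about the spectral projections $E_i$ associated to the \emph{distinct} eigenvalues $\lambda_i$. For a distinct eigenvalue $\lambda_i$, the projection $E_i e_0$ is the sum of the components of $e_0$ along those $u_j$ with $\lambda^{(j)} = \lambda_i$. Because the $u_j$ are mutually orthogonal and each contributes a nonzero coefficient $c_j = \frac{1}{n}$, these contributions cannot cancel: $E_i e_0 = \sum_{j : \lambda^{(j)} = \lambda_i} \frac{1}{n} u_j$ is a nontrivial linear combination of nonzero orthogonal vectors, hence nonzero. Therefore $\lambda_i \in \Theta_A(e_0)$ for every $i$, giving $\Spec(A) \subseteq \Theta_A(e_0)$; the reverse inclusion is immediate from the definition, so $\Theta_A(e_0) = \Spec(A)$.

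The only subtle point — and the step I would be most careful about — is the last one: one must not confuse the orthogonal eigenvectors $u_j$ with the distinct eigenvalues, since several $u_j$ may share an eigenvalue. The nonvanishing of each individual coefficient $c_j$ is what guarantees that, after grouping the $u_j$ by eigenvalue, no projection $E_i e_0$ collapses to zero; this uses orthogonality of the $u_j$ in an essential way so that the grouped sum has nonzero norm $\|E_i e_0\|^2 = \sum_{j : \lambda^{(j)} = \lambda_i} |c_j|^2 \|u_j\|^2 = \frac{|\{ j : \lambda^{(j)} = \lambda_i\}|}{n} > 0$. There is otherwise no real obstacle; the result is essentially the observation that $e_0$ is a ``uniform'' combination of all Fourier modes.
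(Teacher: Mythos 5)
Your proof is correct and takes essentially the same approach as the paper: both arrive at $E_\lambda e_0 = \frac{1}{n}\sum_{j \in I_\lambda} u_j$ via the Fourier eigenvectors of Lemma~\ref{lem:ortho} and use their pairwise orthogonality to rule out cancellation within each eigenspace. Your explicit norm computation $\|E_\lambda e_0\|^2 = |I_\lambda|/n > 0$ is simply a quantitative version of the paper's appeal to linear independence of the $u_j$.
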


\begin{proof}
It is enough to show that $\Theta_A(e_0) \supset \spec(A)$.
Let $\lambda \in \spec(A)$.
Define 
\[
I_\lambda = \left\{ j \in [n] \, \middle | \, \sum_{s \in S} \zeta_n^{js} = \lambda \right\}.
\]
Let $E_\lambda$ be the projection matrix associated to the eigenvalue $\lambda$.
%    We write
%    \begin{equation} \label{1212-2}
%        u_i := (\zeta_n^{ij})_{j=0}^{n-1}.
%    \end{equation}
Then by~(\ref{1212-2}) and~(\ref{1225-1}), we have %Lemma~\ref{lem:ortho},
    \begin{align*}
        E_\lambda = \frac{1}{n} \sum_{j \in I_\lambda} u_j u_j^*. 
        %\sum_{j \in I_\lambda} \frac{u_j u_j^*}{\langle u_j, u_j \rangle}
    \end{align*}
    Hence
    \begin{align*}
        E_\lambda e_0
        = \frac{1}{n} \left( \sum_{j \in I_\lambda} u_j u_j^* \right) e_0
        = \frac{1}{n} \sum_{j \in I_\lambda} u_j.
    \end{align*}
    By Lemma~\ref{lem:ortho},
    $u_i$'s are pairwise orthogonal,
    so they are linearly independent.
    This implies that $E_\lambda e_0 \neq 0$.
    Therefore, we have $\lambda \in \Theta_A(e_0)$, i.e.,
    $\spec(A) = \Theta_A(e_0)$.
\end{proof}

\section{Cycle graphs}
As a simple application of the previous sections,
we characterize circulant graphs with valency $2$ that admit perfect state transfer.
Connected $2$-regular graphs are cycle graphs.
Thus, it is sufficient to consider only $C_n = X(\MB{Z}_n, \{\pm 1\})$ without loss of generality.
By Lemma~\ref{1214-3},
if the cycle graph $C_n$ admits perfect state transfer,
then $n$ is even.
Moreover, let $n = 2l$,
and if perfect state transfer occurs from $d^* e_x$,
then it must occur from $d^* e_x$ to $d^* e_{x+l}$.
Conversely, it can be easily verified that
$C_{2l}$ indeed admits perfect state transfer
by visually capturing the action of the time evolution matrix $U$.
Readers should recall the action of $U$ described in Figure~\ref{44}.
For simplicity, we confirm this with $C_6$
as shown in Figure~\ref{0327-1},
but readers will be convinced that
the same is true for general $C_{2l}$.

\begin{figure}[hbt]
\begin{center}
\begin{tikzpicture} %1
[scale = 0.6,
v/.style = {circle, fill = black, inner sep = 0.7mm},
u/.style = {circle, fill = white, inner sep = 0.1mm}
]
\node[v] (1) at (2,0) {};
\node[v] (2) at (1, 1.73) {};
\node[v] (3) at (-1, 1.73) {};
\node[v] (4) at (-2, 0) {};
\node[v] (5) at (-1, -1.73) {};
\node[v] (6) at (1, -1.73) {};
\draw[line width = 1pt] (1) to (2);
\draw[line width = 1pt] (2) to (3);
\draw[line width = 1pt] (3) to (4);
\draw[line width = 1pt] (4) to (5);
\draw[line width = 1pt] (5) to (6);
\draw[line width = 1pt] (6) to (1);
\draw[draw = blue, line width = 1pt, ->] (-1.3, 1.75) to node[left] {$\textcolor{blue}{\frac{1}{\sqrt{2}}}$} (-2.16, 0.3);
\draw[draw = blue, line width = 1pt, ->] (-1.3, -1.75) to node[left] {$\textcolor{blue}{\frac{1}{\sqrt{2}}}$} (-2.16, -0.3);
\end{tikzpicture}
\raisebox{28pt}{$\quad \overset{U}{\mapsto} \quad$}
\begin{tikzpicture} %2
[scale = 0.6,
v/.style = {circle, fill = black, inner sep = 0.7mm},
u/.style = {inner sep = 0.1mm}%{circle, fill = transparent!50, inner sep = 0.1mm}
]
%\node[u, blue] (32) at (0, 0.7) {{\scriptsize$\frac{2}{2} - 1$}};
%\node[u, blue] (21) at (2, 1.5) {{\scriptsize$\frac{2}{2}$}};
\node[v] (1) at (2,0) {};
\node[v] (2) at (1, 1.73) {};
\node[v] (3) at (-1, 1.73) {};
\node[v] (4) at (-2, 0) {};
\node[v] (5) at (-1, -1.73) {};
\node[v] (6) at (1, -1.73) {};
\draw[line width = 1pt] (1) to (2);
\draw[line width = 1pt] (2) to (3);
\draw[line width = 1pt] (3) to (4);
\draw[line width = 1pt] (4) to (5);
\draw[line width = 1pt] (5) to (6);
\draw[line width = 1pt] (6) to (1);
\node[u] (23o) at (0.7,1.45) {};
\node[u] (23t) at (-0.7, 1.45) {};
\node[u] (21o) at (1.3,1.7) {};
\node[u] (21t) at (2.15, 0.3) {};
\draw[draw = blue, line width = 1pt, ->] (-2.16, 0.3) to node[left] {$\textcolor{blue}{\frac{1}{\sqrt{2}}}$} (-1.3, 1.75);
\draw[draw = blue, line width = 1pt, ->] (-2.16, -0.3) to node[left] {$\textcolor{blue}{\frac{1}{\sqrt{2}}}$} (-1.3, -1.75);
\end{tikzpicture}
\raisebox{28pt}{$\quad \overset{U}{\mapsto} \quad$}
\begin{tikzpicture} %3
[scale = 0.6,
v/.style = {circle, fill = black, inner sep = 0.7mm},
u/.style = {circle, fill = white, inner sep = 0.1mm},
baseline = -1.12cm
]
\node[v] (1) at (2,0) {};
\node[v] (2) at (1, 1.73) {};
\node[v] (3) at (-1, 1.73) {};
\node[v] (4) at (-2, 0) {};
\node[v] (5) at (-1, -1.73) {};
\node[v] (6) at (1, -1.73) {};
\draw[line width = 1pt] (1) to (2);
\draw[line width = 1pt] (2) to (3);
\draw[line width = 1pt] (3) to (4);
\draw[line width = 1pt] (4) to (5);
\draw[line width = 1pt] (5) to (6);
\draw[line width = 1pt] (6) to (1);
\draw[draw = blue, line width = 1pt, ->] (-0.7, 2) to node[above] {$\textcolor{blue}{\frac{1}{\sqrt{2}}}$} (0.7, 2);
\draw[draw = blue, line width = 1pt, ->] (-0.7, -2) to node[below] {$\textcolor{blue}{\frac{1}{\sqrt{2}}}$} (0.7, -2);
\end{tikzpicture}
\raisebox{28pt}{$\quad \overset{U}{\mapsto} \quad$}
\begin{tikzpicture} %4
[scale = 0.6,
v/.style = {circle, fill = black, inner sep = 0.7mm},
u/.style = {circle, fill = white, inner sep = 0.1mm}
]
\node[v] (1) at (2,0) {};
\node[v] (2) at (1, 1.73) {};
\node[v] (3) at (-1, 1.73) {};
\node[v] (4) at (-2, 0) {};
\node[v] (5) at (-1, -1.73) {};
\node[v] (6) at (1, -1.73) {};
\draw[line width = 1pt] (1) to (2);
\draw[line width = 1pt] (2) to (3);
\draw[line width = 1pt] (3) to (4);
\draw[line width = 1pt] (4) to (5);
\draw[line width = 1pt] (5) to (6);
\draw[line width = 1pt] (6) to (1);
\draw[draw = blue, line width = 1pt, ->] (1.3, 1.75) to node[right] {$\textcolor{blue}{\frac{1}{\sqrt{2}}}$} (2.16, 0.3);
\draw[draw = blue, line width = 1pt, ->] (1.3, -1.75) to node[right] {$\textcolor{blue}{\frac{1}{\sqrt{2}}}$} (2.16, -0.3);
\end{tikzpicture}
\caption{Visual proof that $C_6$ admits perfect state transfer at time $3$\MNOTE{Change A.7}} \label{0327-1}
\end{center}
\end{figure}
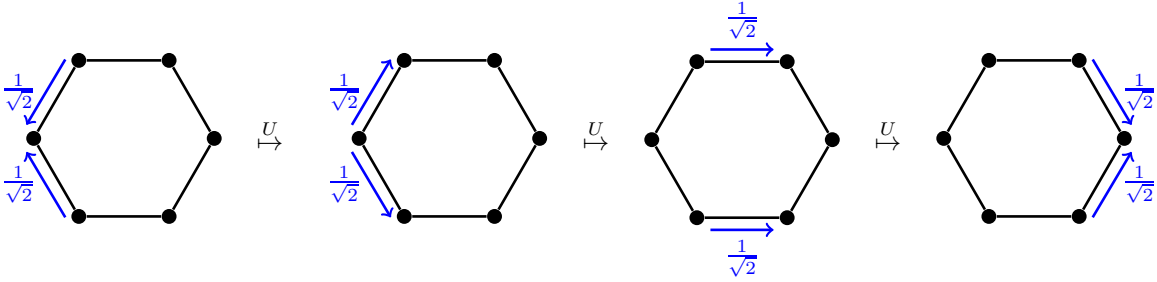

In summarizing the above discussion,
we have the following.

\begin{thm} \label{0331-1}
Let $C_n$ be the cycle graph on $n$ vertices.
\begin{enumerate}
\item If $n$ is odd, then $C_n$ does not admit perfect state transfer between distinct vertex type states.
\item If $n$ is even, say $n = 2l$,
then $C_n$ admits perfect state transfer from $d^*e_x$ to $d^*e_{x+l}$ at minimum time $l$.
\end{enumerate}
\end{thm}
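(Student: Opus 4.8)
The plan is to dispose of part~(i) immediately and then spend all the effort on part~(ii) by turning the ``perfect transmission'' picture of Figure~\ref{44} into an exact computation. For part~(i), observe that Lemma~\ref{1214-3} already asserts that perfect state transfer on any $X(\MB{Z}_n,S)$ forces $n$ to be even. Since the odd cycle is $C_n=X(\MB{Z}_n,\{\pm1\})$ with $n$ odd, no perfect state transfer can occur between distinct vertex type states, and part~(i) is finished. Thus the whole content lies in part~(ii).

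For the existence statement in part~(ii), I would first record the action of $U$ on $C_{2l}$ exactly. Since every vertex has degree $2$, the coefficient $\tfrac{2}{\deg_\G t(b)}$ appearing in~(\ref{1223-1}) equals $1$ and the reflection weight $\tfrac{2}{\deg}-1$ vanishes, so for any $\Psi\in\MB{C}^{\MC{A}}$ and any arc $a$ one computes $(U\Psi)_a=\Psi_{b^\ast}$, where $b^\ast$ is the unique arc into $o(a)$ other than $a^{-1}$. Concretely this reads $U\,\e_{(k,k+1)}=\e_{(k+1,k+2)}$ and $U\,\e_{(k+1,k)}=\e_{(k,k-1)}$, i.e.\ forward arcs advance and backward arcs retreat by one step, which is precisely the straight-through transmission of Figure~\ref{44}. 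Iterating gives $U^{\tau}\e_{(x-1,x)}=\e_{(x+\tau-1,\,x+\tau)}$ and $U^{\tau}\e_{(x+1,x)}=\e_{(x-\tau+1,\,x-\tau)}$.

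Applying this to $d^*e_x=\tfrac{1}{\sqrt2}\bigl(\e_{(x-1,x)}+\e_{(x+1,x)}\bigr)$ and setting $\tau=l$, I would invoke the identity $-l\equiv l\pmod{2l}$ to rewrite the two images as the two arcs pointing into $x+l$, obtaining $U^{l}d^*e_x=\tfrac{1}{\sqrt2}\bigl(\e_{(x+l-1,\,x+l)}+\e_{(x+l+1,\,x+l)}\bigr)=d^*e_{x+l}$. Hence perfect state transfer occurs from $d^*e_x$ to $d^*e_{x+l}$ at time $l$ with phase $\gamma=1$.

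It remains to prove minimality, which I expect to be the only delicate point. By Lemma~\ref{1214-3} the only possible target is $d^*e_{x+l}$, so I must rule out $U^{\tau}d^*e_x\in\MB{C}\,d^*e_{x+l}$ for $1\le\tau\le l-1$. The state $U^{\tau}d^*e_x$ is supported on the two arcs $(x+\tau-1,x+\tau)$ and $(x-\tau+1,x-\tau)$, whose termini are $x+\tau$ and $x-\tau$; these coincide in $\MB{Z}_{2l}$ exactly when $2\tau\equiv0$, i.e.\ when $\tau\equiv0\pmod l$. For $1\le\tau\le l-1$ the two termini are therefore distinct, so in particular they are not both equal to $x+l$, the support of $U^{\tau}d^*e_x$ is disjoint from the two arcs into $x+l$, and $U^{\tau}d^*e_x$ cannot be a scalar multiple of $d^*e_{x+l}$. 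Hence $l$ is the minimum time. The main obstacle is purely the modular bookkeeping in $\MB{Z}_{2l}$—tracking the forward and backward packets and using $-l\equiv l$ correctly—rather than any conceptual difficulty; the degree-$2$ hypothesis is what collapses $U$ to a clean shift and drives the entire argument.
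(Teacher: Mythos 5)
Your proof is correct and takes essentially the same route as the paper: part (i) is dispatched by Lemma~\ref{1214-3}, and part (ii) rests on the perfect transmission of arcs through degree-$2$ vertices, which the paper presents only pictorially via Figure~\ref{0327-1} and declares ``easily verified.'' Your explicit shift computation $U^{\tau}\e_{(x-1,x)}=\e_{(x+\tau-1,\,x+\tau)}$, $U^{\tau}\e_{(x+1,x)}=\e_{(x-\tau+1,\,x-\tau)}$, together with the distinct-termini argument for $1\le\tau\le l-1$, rigorously supplies both the transfer at time $l$ and the minimality claim that the paper leaves to visual inspection.
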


\section{Characterization of perfect state transfer}

For circulant graphs with valency 3 or greater,
the superposition of quantum walkers,
which corresponds to the arrows in our model,
is too complex to consider in the same way
as in the previous section.
Therefore, it is necessary to switch strategies to use of eigenvalues.
The main goal of this section is to characterize perfect state transfer by eigenvalues of graphs.
We slightly generalize Zhan's result and connect it to Chebyshev polynomials.

The \emph{Chebyshev polynomial of the first kind}, denoted by $T_n(x)$,
is the polynomial defined by $T_0(x) = 1$, $T_1(x) = x$ and
$T_{n}(x) = 2xT_{n-1}(x) - T_{n-2}(x)$ for $n \geq 2$.
It is well-known that 
%\begin{equation}
$T_n(\cos \theta) = \cos (n \theta)$,
and hence we have the following.
%\end{equation}
%This implies
%\begin{equation}
%|T_n(x)| \leq 1
%\end{equation}
%for $|x| \leq 1$.

\begin{lem} \label{1203-5}
Let $\lambda \in [-1,1]$.
\begin{enumerate}
\item $|T_\tau(\lambda)| \leq 1$.
\item $T_{\tau}(\lambda) = 1$ if and only if $\lambda = \cos \frac{j}{\tau} \pi$ for some even $j$.
\item $T_{\tau}(\lambda) = -1$ if and only if $\lambda = \cos \frac{j}{\tau} \pi$ for some odd $j$.
\end{enumerate}
\end{lem}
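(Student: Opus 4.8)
The plan is to reduce everything to the identity $T_\tau(\cos\theta)=\cos(\tau\theta)$, which the paper has already recalled, by writing $\lambda=\cos\theta$ with $\theta\in[0,\pi]$. This substitution is legitimate precisely because $\lambda\in[-1,1]$, and the map $\theta\mapsto\cos\theta$ is a bijection from $[0,\pi]$ onto $[-1,1]$. With this in hand, the three parts become elementary statements about $\cos(\tau\theta)$.

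For part (i), I would simply note that $|T_\tau(\lambda)|=|\cos(\tau\theta)|\le 1$, which is immediate. For parts (ii) and (iii), I would observe that $T_\tau(\lambda)=\cos(\tau\theta)$ equals $1$ exactly when $\tau\theta$ is an even multiple of $\pi$, and equals $-1$ exactly when $\tau\theta$ is an odd multiple of $\pi$. Concretely, $\cos(\tau\theta)=1$ iff $\tau\theta=j\pi$ for some even integer $j$, i.e.\ $\theta=\frac{j}{\tau}\pi$; since $\lambda=\cos\theta=\cos\frac{j}{\tau}\pi$, this gives the claimed form. The odd case is identical with $j$ odd. I would carry out (ii) in full and remark that (iii) follows by the same argument replacing ``even'' by ``odd''.

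The only genuine care needed is checking that the ranges of indices match up: when $\theta$ ranges over $[0,\pi]$ and we set $\theta=\frac{j}{\tau}\pi$, the admissible integers $j$ run over $\{0,1,\dots,\tau\}$, and one should confirm that restricting to even (resp.\ odd) such $j$ captures exactly the solutions of $T_\tau(\lambda)=1$ (resp.\ $-1$) without spurious or missing values. Because $\cos$ is injective on $[0,\pi]$, distinct values of $\theta$ in this range give distinct $\lambda$, so there is no overcounting, and every solution $\lambda$ arises from a unique such $j$. This bookkeeping is the main — though still routine — obstacle; there is no deeper difficulty, since the substitution collapses the problem to the defining property of the Chebyshev polynomials.

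I expect the proof to be short. The forward direction of (ii) and (iii) uses $T_\tau(\cos\frac{j}{\tau}\pi)=\cos(j\pi)=(-1)^j$, which is $1$ for even $j$ and $-1$ for odd $j$; the reverse direction uses injectivity of $\cos$ on $[0,\pi]$ together with the classification of solutions of $\cos(\tau\theta)=\pm1$. No results beyond the already-stated identity $T_n(\cos\theta)=\cos(n\theta)$ are required.
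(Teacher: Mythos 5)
Your proposal is correct and matches the paper's approach exactly: the paper gives no separate proof, deriving the lemma directly from the identity $T_\tau(\cos\theta) = \cos(\tau\theta)$, which is precisely the substitution $\lambda = \cos\theta$, $\theta \in [0,\pi]$, that you carry out. Your additional bookkeeping about the range of $j$ and injectivity of $\cos$ on $[0,\pi]$ simply makes explicit what the paper leaves to the reader.
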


\begin{lem}	\label{lem:r(P)}
Let $P$ be the discriminant of a graph $\Gamma$, and $\tau$ be a non-negative integer.
Then, the spectral radius of $P$ is at most $1$, and that of $T_\tau(P)$ is at most $1$.
In particular, $|T_\tau(\lambda)| \leq 1$ for every $\lambda \in \spec(P)$.
\end{lem}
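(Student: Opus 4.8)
The plan is to bound the spectral radius of $P$ by its operator norm, exploiting the factorization $P = dRd^*$, and then to transfer the bound to $T_\tau(P)$ through the spectral mapping property of Lemma~\ref{1203-1}.

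First I would assemble the relevant norm estimates. Since $R$ is a symmetric involution ($R^2 = I_{\MC{A}}$ and $R_{a,b} = \delta_{a,b^{-1}}$ gives $R^* = R$), its eigenvalues are $\pm 1$, so $\|R\| = 1$. From $dd^* = I_V$ in~(\ref{1124-2}) we get $\|d\|^2 = \|dd^*\| = 1$, hence $\|d\| = \|d^*\| = 1$. Submultiplicativity of the operator norm then yields $\|P\| = \|dRd^*\| \leq \|d\|\,\|R\|\,\|d^*\| = 1$. Moreover $P$ is Hermitian, since $P^* = (dRd^*)^* = dR^*d^* = dRd^* = P$; in particular its eigenvalues are real and its spectral radius equals $\|P\|$. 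Therefore the spectral radius of $P$ is at most $1$, and every $\lambda \in \spec(P)$ lies in the real interval $[-1,1]$.

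For the second assertion I would apply Lemma~\ref{1203-1} to the spectral decomposition $P = \sum_{i=1}^s \lambda_i E_i$, which gives $T_\tau(P) = \sum_{i=1}^s T_\tau(\lambda_i) E_i$. Thus the eigenvalues of $T_\tau(P)$ are exactly the numbers $T_\tau(\lambda_i)$ with $\lambda_i \in \spec(P) \subset [-1,1]$, and by Lemma~\ref{1203-5}(i) each satisfies $|T_\tau(\lambda_i)| \leq 1$; hence the spectral radius $\max_i |T_\tau(\lambda_i)|$ of $T_\tau(P)$ is at most $1$. The concluding ``in particular'' statement is then immediate, as every $\lambda \in \spec(P)$ is one of the $\lambda_i$. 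The only point needing care is the step from $\|P\| \leq 1$ to the eigenvalues lying in the real interval $[-1,1]$, which is what Lemma~\ref{1203-5} requires; this rests on $P$ being Hermitian rather than merely normal, so I would make the identity $P^* = P$ explicit. Beyond that the argument is routine and presents no genuine obstacle.
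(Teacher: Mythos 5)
Your proof is correct, but it takes a different route from the paper: where you give a self-contained argument, the paper simply cites Proposition~3.3 of \cite{kubota2021quantum} (with $\eta = 0$) for the bound on the spectral radius of $P$, and then concludes via Lemma~\ref{1203-5} exactly as you do. Your direct argument --- $R^* = R$ and $R^2 = I_{\MC{A}}$ give $\|R\| = 1$; the identity $dd^* = I_V$ from~(\ref{1124-2}) gives $\|d\| = \|d^*\| = 1$; submultiplicativity then yields $\|P\| = \|dRd^*\| \leq 1$ --- is a clean unpacking of what the cited proposition specializes to in this setting, and it has the merit of making visible a point the paper leaves implicit: Lemma~\ref{1203-5} needs $\lambda \in [-1,1]$, i.e.\ real eigenvalues, which you justify by observing $P^* = (dRd^*)^* = dRd^* = P$, so that the spectral radius equals $\|P\|$ and $\spec(P) \subset [-1,1]$. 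You also make explicit the appeal to Lemma~\ref{1203-1} for the spectral mapping $T_\tau(P) = \sum_i T_\tau(\lambda_i)E_i$, which the paper uses silently. What the paper's citation buys is generality (the referenced result covers the twisted model with parameter $\eta$) and brevity; what your version buys is a proof readable without consulting \cite{kubota2021quantum}, at the cost of a few lines. There is no gap in your argument.
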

\begin{proof}
    It follows from~\cite[Proposition~3.3]{kubota2021quantum} with $\eta = 0$ that the spectral radius of $P$ is at most $1$.
    This together with Lemma~\ref{1203-5} implies that the spectral radius of $T_\tau(P)$ is at most $1$.
\end{proof}

Furthermore, Chebyshev polynomials connect Grover walks and random walks.
Kubota and Segawa presented the following equality
and obtained a necessary condition on eigenvalues for perfect state transfer to occur~\cite{kubota2022perfect}.

\begin{lem}[Lemma~3.1 in \cite{kubota2022perfect}] \label{1201-1}
Let $\G$ be a graph with the time evolution matrix $U$ and the discriminant $P$.
Then we have $d U^{\tau} d^{*} = T_{\tau}(P)$ for \TR{a}\TB{any}\MNOTE[green]{Change~B.13} non-negative integer $\tau$.
\end{lem}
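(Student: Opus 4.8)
The plan is to prove the identity by induction on $\tau$, matching the three-term recurrence $T_{\tau+1}(x) = 2x T_{\tau}(x) - T_{\tau-1}(x)$ that defines the Chebyshev polynomials. Writing $M_\tau = dU^\tau d^*$, the base cases are quick: since $T_0 = 1$ and $dd^* = I_V$ by~(\ref{1124-2}), we get $M_0 = dd^* = I_V = T_0(P)$; and using $U = R(2d^*d - I_{\MC{A}})$ together with $P = dRd^*$ and $dd^* = I_V$ we compute
\[ M_1 = dR(2d^*d - I_{\MC{A}})d^* = 2(dRd^*)(dd^*) - dRd^* = 2P - P = P = T_1(P). \]
It therefore suffices to establish the operator recurrence $M_{\tau+1} = 2P M_\tau - M_{\tau-1}$ for every $\tau \geq 1$.

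To this end I would first record three auxiliary identities obtained by direct manipulation. From $U = R(2d^*d - I_{\MC{A}})$ and $P = dRd^*$,
\[ dU = 2(dRd^*)d - dR = 2Pd - dR. \]
From $R^2 = I_{\MC{A}}$,
\[ RU = R^2(2d^*d - I_{\MC{A}}) = 2d^*d - I_{\MC{A}}. \]
The crucial one is obtained by composing the last identity with $d$ on the left and using $dd^* = I_V$:
\[ dRU = d(2d^*d - I_{\MC{A}}) = 2(dd^*)d - d = 2d - d = d. \]
This identity $dRU = d$ is the heart of the argument: it lets the factor $dR$ be absorbed at the cost of lowering a single power of $U$.

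With these in hand the recurrence follows. For $\tau \geq 1$, factoring one copy of $U$ on the left and applying $dU = 2Pd - dR$ gives
\[ M_{\tau+1} = (dU)U^\tau d^* = 2P\,dU^\tau d^* - dRU^\tau d^* = 2P M_\tau - dRU^\tau d^*. \]
Applying $dRU = d$ to the remaining term yields $dRU^\tau d^* = (dRU)U^{\tau-1}d^* = dU^{\tau-1}d^* = M_{\tau-1}$, so $M_{\tau+1} = 2P M_\tau - M_{\tau-1}$, which is exactly the Chebyshev recurrence. Induction then gives $M_\tau = T_\tau(P)$ for all $\tau \geq 0$. I expect the only subtle point to be the bookkeeping of the auxiliary identities — in particular verifying $dRU = d$ — since once that is in place the three-term recurrence drops out mechanically and the rest reduces to the routine check of the base cases.
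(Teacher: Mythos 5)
Your proof is correct and follows essentially the same route as the proof of this lemma, which the paper itself does not reprove but cites from Lemma~3.1 of \cite{kubota2022perfect}: an induction on $\tau$ driven by the identities $dU = 2Pd - dR$ and $dRU = d$ (both consequences of $U = R(2d^*d - I_{\MC{A}})$, $R^2 = I_{\MC{A}}$, $P = dRd^*$ and $dd^* = I_V$), with base cases $dd^* = I_V = T_0(P)$ and $dUd^* = P = T_1(P)$ and the three-term Chebyshev recurrence closing the induction. No gaps; nothing further is needed.
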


%\begin{thm}[Theorem~3.3 in \cite{kubota2022perfect}] \label{1203-2}
%Let $\G$ be a graph with the discriminant $P$,
%and let $x,y$ be vertices of $\G$.
%If perfect state transfer occurs from $d^*e_x$ to $d^*e_y$ at time $\tau$,
%then $T_{\tau}(\lambda) = \pm 1$ holds for any $\lambda \in \Theta_{P}(e_x)$.
%\end{thm}

Interestingly, an equality analogous to Lemma~\ref{1201-1} has recently been presented also in another quantum walk model, which is called vertex-face walks.
For more details see Section~4 of the paper written by Guo and Schmeits~\cite{guo2022perfect}.

Zhan has characterized in Theorem~5.3 of~\cite{zhan2019infinite} the occurrence of perfect state transfer in terms of eigenvalues and eigenprojections.
We slightly extend Zhan's theorem by using Lemma~\ref{1201-1}.
%The following is immediately derived from properties of the Chebyshev polynomials.
%Only the statements are provided for later discussion.
%%%
%The following is shown by Zhan~\cite{zhan2019infinite}.
%The same claim is cited,
%but the setting is adapted to ours.
%
%\begin{lem}[Lemma~5.1 in \cite{zhan2019infinite}] \label{1205-1}
%Let $\G = (V,E)$ be a regular graph,
%and let $x,y \in V$.
%We assume that a state $\Psi \in \MB{C}^{\MC{A}}$ satisfies $\Psi_a = 0$ if $t(a) \neq y$.
%If $U^{\tau} d^* e_x = \Psi$ for some positive integer $\tau$, then we have $\Psi = d^*e_y$.
%\end{lem}
%
%\sout{The following theorem is a characterization of perfect state transfer associated with Chebyshev polynomials.}
Our claim does not require that graphs are regular.
%\TB{We present two lemmas before stating our theorem.}

\TB{       
\begin{lem}[Section~4 in \cite{chan2021pretty}] \label{1201-2} \MNOTE[black]{Change C.3}
Let $\G$ be a graph,
and let $U = U(\G)$ be the time evolution matrix.
Perfect state transfer occurs from a state $\Phi$ to a state $\Psi$ at time $\tau$
if and only if $|\Span{ U^{\tau}\Phi, \Psi }| = 1$.
\end{lem}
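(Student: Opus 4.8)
The plan is to recognize that $\Span{U^{\tau}\Phi, \Psi}$ denotes the Hermitian inner product of the two unit vectors $U^{\tau}\Phi$ and $\Psi$, so that the claimed criterion $|\Span{U^{\tau}\Phi, \Psi}| = 1$ is precisely the equality case of the Cauchy--Schwarz inequality. The whole argument then reduces to two ingredients: that $U$ is unitary, so that $U^{\tau}\Phi$ remains a unit vector, and that equality in Cauchy--Schwarz forces linear dependence of the two vectors involved.

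First I would verify that $U = R(2d^*d - I_{\MC{A}})$ is unitary. Since $R_{a,b} = \delta_{a,b^{-1}}$ is real and symmetric with $R^2 = I_{\MC{A}}$, the shift matrix $R$ is a unitary involution. For the second factor, note that $d^*d$ is Hermitian, and using $dd^* = I_V$ from Equality~(\ref{1124-2}) one gets $(d^*d)^2 = d^*(dd^*)d = d^* d$; hence $(2d^*d - I_{\MC{A}})^2 = 4(d^*d)^2 - 4 d^*d + I_{\MC{A}} = I_{\MC{A}}$, so $2d^*d - I_{\MC{A}}$ is a Hermitian involution and therefore unitary as well. Consequently $U$, being a product of unitaries, is unitary, and so $\|U^{\tau}\Phi\| = \|\Phi\| = 1$ for any state $\Phi$ and any $\tau$.

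With $U^{\tau}\Phi$ and $\Psi$ both of unit norm, Cauchy--Schwarz yields $|\Span{U^{\tau}\Phi, \Psi}| \le \|U^{\tau}\Phi\|\,\|\Psi\| = 1$, with equality if and only if $U^{\tau}\Phi$ and $\Psi$ are linearly dependent, say $U^{\tau}\Phi = \gamma \Psi$ for some $\gamma \in \MB{C}$. Taking norms and using $\|U^{\tau}\Phi\| = \|\Psi\| = 1$ forces $|\gamma| = 1$. This is exactly the defining condition that perfect state transfer occurs from $\Phi$ to $\Psi$ at time $\tau$, so the two implications are established simultaneously: $|\Span{U^{\tau}\Phi, \Psi}| = 1$ holds if and only if there is a unimodular $\gamma$ with $U^{\tau}\Phi = \gamma \Psi$.

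I do not anticipate a serious obstacle. The only points that require care are the unitarity of $U$, which rests entirely on $dd^* = I_V$, and the observation that the scalar $\gamma$ furnished by the equality case of Cauchy--Schwarz automatically has modulus one precisely because both $U^{\tau}\Phi$ and $\Psi$ are normalized; without normalization the inner-product condition would only determine $\gamma$ up to its phase.
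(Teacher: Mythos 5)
Your proposal is correct. The paper gives no proof of this lemma at all---it is imported directly from Section~4 of \cite{chan2021pretty}---so there is no internal argument to diverge from; your route (unitarity of $U$, since $R$ is a symmetric involution and $dd^* = I_V$ makes $2d^*d - I_{\MC{A}}$ a Hermitian involution, so $U^\tau\Phi$ stays a unit vector, and then the equality case of Cauchy--Schwarz forces $U^\tau\Phi = \gamma\Psi$ with $|\gamma|=1$) is exactly the standard argument behind the citation, and indeed an earlier draft of this paper contained a now-deleted auxiliary lemma proving precisely the Cauchy--Schwarz equality case ($\|X\|\leq 1$, $\|Y\|=1$, $\langle X,Y\rangle = 1$ iff $X=Y$) that you reprove inline. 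Your explicit verification of unitarity via $(d^*d)^2 = d^*d$ is a worthwhile addition, since the paper never states it.
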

}

\TR{
\begin{lem} \label{lem:CS}
	For two vectors $X$ and $Y$ with $\|X\| \leq 1$ and $\|Y\|=1$,
	$\langle X,Y \rangle = 1$ if and only if $X = Y$.
\end{lem}
}
\TR{
\begin{proof}
    We assume $\langle X,Y \rangle = 1$.
    We have $1 = |\langle X,Y \rangle | \leq \|X\|\|Y\| =  \|X\| \leq 1$ by the Cauchy-Schwarz inequality.
    Hence, both equalities hold, i.e, $\|X\|=1$ and there exists a complex number $k$ such that $X=kY$.
    Therefore, $1=\langle X,Y\rangle = k\langle Y,Y \rangle=k$.
    The converse is clear.
    \MNOTE[black]{Change B.15}
\end{proof}
}

\begin{thm} \label{pst}
Let $\G = (V, E)$ be a graph.
Let $x,y \in V$ and $\tau \in \mathbb{Z}_{\geq 1}$.
For each eigenvalue $\lambda$ of $P=P(\G)$,
let $E_{\lambda}$ be the eigenprojection of $P$ associated to $\lambda$.
The following are equivalent.
\begin{enumerate}[\textup{(}A\textup{)}]
	\item Perfect state transfer occurs on $\G$ from $d^*e_x$ to $d^*e_y$ at time $\tau$.\label{pst:A}
	\item %$||T_{\tau}(P)e_x|| = 1$ and 
		There exists $\gamma \in \{\pm 1\}$ such that $T_{\tau}(P)e_x = \gamma e_y$.\label{pst:B}
	\item There exists $\gamma \in \{\pm 1\}$ such that all of the following three conditions are satisfied:\label{pst:C}
	\begin{enumerate}[\textup{(}a\textup{)}]
		\item For any $\lambda \in \spec(P)$, $E_{\lambda}e_x = \pm E_{\lambda}e_y$.\label{pst:C:a}
		\item For any $\lambda \in \Theta_{P}(e_x)$,\label{pst:C:b}
			if $E_{\lambda}e_x = \gamma E_{\lambda}e_y$,
			then $\lambda = \cos \frac{j}{\tau}\pi$ for some even $j$.
		\item For any $\lambda \in \Theta_{P}(e_x)$,\label{pst:C:c}
			if $E_{\lambda}e_x = -\gamma E_{\lambda}e_y$,
			then $\lambda = \cos \frac{j}{\tau}\pi$ for some odd $j$.
	\end{enumerate}
\end{enumerate}
Moreover,
if $\G$ is regular, then $\gamma = 1$ in both~\ref{pst:B} and~\ref{pst:C}.
\end{thm}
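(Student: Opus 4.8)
The plan is to establish the chain \ref{pst:A} $\Leftrightarrow$ \ref{pst:B} $\Leftrightarrow$ \ref{pst:C} and then read off the value of $\gamma$ in the regular case. First I would record two standing facts. By Equality~(\ref{1124-2}) both $d^*e_x$ and $d^*e_y$ are unit vectors, since $\|d^*e_x\|^2=e_x^*dd^*e_x=e_x^*I_Ve_x=1$, and likewise for $y$. Moreover $P=dRd^*$ is real symmetric (as $d$ is real and $R=R^\top$), so its spectrum is real, lies in $[-1,1]$ by Lemma~\ref{lem:r(P)}, its eigenprojections $E_\lambda$ are real, and $T_\tau(P)$ is again real symmetric.

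For \ref{pst:A} $\Leftrightarrow$ \ref{pst:B} I would use Lemma~\ref{1201-1} as the bridge. Computing the relevant overlap,
\[
\langle U^\tau d^*e_x,\,d^*e_y\rangle=(d^*e_y)^*U^\tau d^*e_x=e_y^*(dU^\tau d^*)e_x=e_y^*T_\tau(P)e_x,
\]
which is a real scalar. By Lemma~\ref{1201-2}, perfect state transfer at time $\tau$ is exactly $|e_y^*T_\tau(P)e_x|=1$. Writing $X=T_\tau(P)e_x$ and $Y=e_y$, Lemma~\ref{lem:r(P)} gives $\|X\|\le1$ while $\|Y\|=1$; since $\langle X,Y\rangle$ is real, $|\langle X,Y\rangle|=1$ means $\langle X,\pm Y\rangle=1$ for one choice of sign, and Lemma~\ref{lem:CS} applied to the unit vector $\pm Y$ then forces $X=\pm Y$, i.e.\ $T_\tau(P)e_x=\gamma e_y$ with $\gamma\in\{\pm1\}$. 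The converse is immediate, since then $\langle X,Y\rangle=\gamma$ has modulus one.

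For \ref{pst:B} $\Leftrightarrow$ \ref{pst:C} I would expand $T_\tau(P)=\sum_\lambda T_\tau(\lambda)E_\lambda$ via Lemma~\ref{1203-1} and use Equalities~(\ref{E1})--(\ref{E3}) to see that $T_\tau(P)e_x=\gamma e_y$ is equivalent to the per-eigenspace system $T_\tau(\lambda)E_\lambda e_x=\gamma E_\lambda e_y$ for all $\lambda\in\spec(P)$. Taking norms and summing, with $|T_\tau(\lambda)|\le1$,
\[
1=\sum_\lambda\|E_\lambda e_y\|^2=\sum_\lambda|T_\tau(\lambda)|^2\|E_\lambda e_x\|^2\le\sum_\lambda\|E_\lambda e_x\|^2=1,
\]
so each summand with $E_\lambda e_x\neq0$ must satisfy $|T_\tau(\lambda)|=1$, hence $T_\tau(\lambda)=\pm1$ on $\Theta_P(e_x)$. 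Substituting $T_\tau(\lambda)=\pm1$ back into the system yields $E_\lambda e_x=\pm E_\lambda e_y$ on $\Theta_P(e_x)$ and $E_\lambda e_y=0$ off it, which is condition~\ref{pst:C:a}; the parity statements \ref{pst:C:b} and \ref{pst:C:c} then follow from Lemma~\ref{1203-5}(ii),(iii), translating $T_\tau(\lambda)=1$ (resp.\ $-1$) into $\lambda=\cos\frac{j}{\tau}\pi$ with $j$ even (resp.\ odd). For the converse I would run this in reverse: given~\ref{pst:C}, for each $\lambda\in\Theta_P(e_x)$ the sign in~\ref{pst:C:a} is $\gamma$ or $-\gamma$, and Lemma~\ref{1203-5} converts the parity of $j$ into $T_\tau(\lambda)=1$ or $-1$, from which $T_\tau(\lambda)E_\lambda e_x=\gamma E_\lambda e_y$ follows in each case (the off-support $\lambda$ being trivial); summing over $\lambda$ recovers~\ref{pst:B}.

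For the \emph{moreover} clause I would use that a regular graph has $P=\frac1kA$, so $1\in\spec(P)$ with $E_1e_x=E_1e_y\neq0$: condition~\ref{pst:C:a} already forces $x$ and $y$ into a single component, on which $E_1$ is the normalized all-ones projection. Since $T_\tau(1)=\cos(\tau\cdot0)=1$, evaluating~\ref{pst:B} on this eigenspace gives $E_1e_x=\gamma E_1e_y=\gamma E_1e_x$, so $\gamma=1$; the equivalence then passes the same value to~\ref{pst:C}. The hard part will be the equality-case argument in \ref{pst:B} $\Rightarrow$ \ref{pst:C:a}: one must notice that the total-mass equality $\|e_x\|=\|e_y\|=1$ combined with $|T_\tau(\lambda)|\le1$ pins $|T_\tau(\lambda)|$ to exactly $1$ on the support, which is precisely what upgrades the loose relation $T_\tau(\lambda)E_\lambda e_x=\gamma E_\lambda e_y$ into the rigid $E_\lambda e_x=\pm E_\lambda e_y$. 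The remaining steps are bookkeeping with the spectral decomposition and the two parity identities of Lemma~\ref{1203-5}.
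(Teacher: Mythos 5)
Your proof is correct, and its core coincides with the paper's: the expansion $T_\tau(P)e_x=\sum_\lambda T_\tau(\lambda)E_\lambda e_x$, the norm chain $1=\sum_\lambda|T_\tau(\lambda)|^2\|E_\lambda e_x\|^2\le\sum_\lambda\|E_\lambda e_x\|^2=1$ forcing $T_\tau(\lambda)=\pm1$ on $\Theta_P(e_x)$, and the parity translation via Lemma~\ref{1203-5} are exactly the published proof of \ref{pst:B}$\Rightarrow$\ref{pst:C}. You deviate, validly, in two local steps. First, you prove \ref{pst:A}$\Leftrightarrow$\ref{pst:B} through Lemma~\ref{1201-2} plus the equality case of Cauchy--Schwarz; be aware that the auxiliary Cauchy--Schwarz lemma you cite was cut from the final version of the paper, so you would have to prove that equality case inline. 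The paper's published argument avoids extremal analysis altogether: from $U^\tau d^*e_x=\gamma d^*e_y$ it gets $\gamma\in\{\pm1\}$ because $U=R(2d^*d-I_{\MC{A}})$ and $d^*$ are real, then left-multiplies by $d$ to obtain $T_\tau(P)e_x=dU^\tau d^*e_x=\gamma dd^*e_y=\gamma e_y$ via Lemma~\ref{1201-1} and $dd^*=I_V$. Consequently the paper closes the equivalence as a cycle, proving \ref{pst:C}$\Rightarrow$\ref{pst:A} by computing $(d^*e_y)^*U^\tau(d^*e_x)=\gamma$ over the split $\Theta_\pm$ and invoking Lemma~\ref{1201-2} once, whereas your two-way use of Lemma~\ref{1201-2} lets you argue \ref{pst:C}$\Rightarrow$\ref{pst:B} directly; these are equivalent in substance, and your reverse direction correctly uses \ref{pst:C:a} to kill the off-support terms $E_\lambda e_y$, which is where that condition genuinely earns its keep. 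Second, for the \emph{moreover} clause the paper never touches the structure of the $1$-eigenspace: it deduces $E_1e_x=\gamma E_1e_y$ from \ref{pst:C:b} and \ref{pst:C:c} (the odd-parity alternative is impossible at $\lambda=1$ since $T_\tau(1)=1$) and then pairs with the all-ones vector, $1=\langle\mathbf{j},e_x\rangle=\langle\mathbf{j},E_1e_x\rangle=\gamma\langle\mathbf{j},e_y\rangle=\gamma$, using only $E_1\mathbf{j}=\mathbf{j}$ and self-adjointness of $E_1$. Your variant --- identifying $E_1$ with the normalized component-indicator projection and excluding the sign $-1$ by nonnegativity --- is also sound, but it imports the standard description of the top eigenspace of a regular graph that the $\mathbf{j}$-pairing renders unnecessary; in exchange it yields the extra geometric observation that \ref{pst:C:a} already places $x$ and $y$ in the same component.
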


\begin{proof}
    Let $U$ be the time evolution matrix of $\Gamma$,
    and $P = \sum_{\lambda \in \spec(P)} \lambda E_\lambda$ the spectral decomposition.
 
    We assume~\ref{pst:A}, and prove~\ref{pst:B}.
    \TR{We set $\gamma := (d^*e_y)^* U^\tau (d^* e_x)$, and verify $\gamma \in \{\pm 1\}$ and $T_{\tau}(P)e_x = \gamma e_y$.
    Indeed,  we have $|\gamma| = 1$ from Lemma~\ref{1201-2}.
    Since $d$ and $U$ are real, we obtain $\gamma \in \{\pm 1\}$.
    In addition, we have $\| T_\tau(P) e_x\| \leq 1$ by Lemma~\ref{lem:r(P)}.
    Also, we obtain $\gamma = e_y^* T_\tau(P) e_x$ by Lemma~\ref{1201-1}.
    We can rewrite this as $1 = \langle T_\tau(P)e_x, \gamma e_y \rangle$.
    Hence by Lemma~\ref{lem:CS}, we have $T_\tau(P)e_x = \gamma e_y$ as desired.}
    \TB{By~\ref{pst:A}, we have $U^\tau d^* e_x = \gamma d^* e_y$ for some $\gamma \in \mathbb{C}$ with $|\gamma| = 1$. Since $U = R(2d^* d - \TR{I}\TB{I_{\MC{A}}}\MNOTE[green]{Change B.3})$ and $d^*$ are real matrices, $\gamma \in \{\pm 1\}$ follows.
    By~\ref{pst:A} again and Lemma~\ref{1201-1}, we have\MNOTE{Change A.8}
    \[
    T_\tau(P) e_x = (dU^\tau d^*) e_x = d (\gamma d^* e_y) = (dd^*) \gamma e_y = \gamma e_y.\MNOTE[green]{Chane B.15 (=A.8)}
    \]
    }
	
    Next, we assume~\ref{pst:B}, and prove~\ref{pst:C}.
    Take $\gamma \in \{\pm 1\}$ such that $T_\tau(P)e_x = \gamma e_y$.
    Since
    \begin{align*}
	1 
	= \| \gamma e_y \|^2 
	= \| T_\tau(P) e_x \|^2
	= \| \sum_{\lambda \in \spec(P)} T_\tau(\lambda) (E_\lambda e_x) \|^2
        = \sum_{\lambda \in \spec(P)} |T_\tau(\lambda)|^2 \|E_\lambda e_x \|^2
        \leq \sum_{\lambda \in \spec(P)} \|E_\lambda e_x \|^2=1
    \end{align*}
    by Lemmas~\ref{1203-1} and~\ref{lem:r(P)},
    we have 
    $
	T_\tau(\lambda) \in \{\pm 1\}$ for every $\lambda \in \Theta_P(e_x).
    $
    Also, from $T_\tau(P) e_x = \gamma e_y$, we have by Lemma~\ref{1203-1},
    $T_\tau(\lambda) E_\lambda e_x = \gamma E_\lambda e_y$ for every $\lambda \in \spec(P)$.
    These together with Lemma~\ref{1203-5} implies~\ref{pst:C}.

    We assume~\ref{pst:C}, and prove~\ref{pst:A}.
    By Lemmas~\ref{1201-1} and~\ref{1203-1}, we have
    \begin{align}	\label{pst:1}
	(d^*e_y)^* U^\tau (d^* e_x)
	= e_y^* T_\tau(P) e_x 
	= \sum_{\lambda \in \spec(P)} T_\tau(\lambda) (E_\lambda e_y)^* (E_\lambda e_x).
    \end{align}
    Let $\Theta_\pm := \{ \lambda \in \Theta_P(e_x) \mid E_\lambda e_x = \pm \gamma E_\lambda e_y\}$.
    Then, we have
    \begin{align*}
       \sum_{\lambda \in \spec(P)} T_\tau(\lambda) (E_\lambda e_y)^* (E_\lambda e_x)
        &=\sum_{\lambda \in \Theta_P(e_x)} T_\tau(\lambda) (E_\lambda e_y)^* (E_\lambda e_x) \\
        &= \sum_{\lambda \in \Theta_+} T_\tau(\lambda) (E_\lambda e_y)^* (E_\lambda e_x)
        +\sum_{\lambda \in \Theta_-} T_\tau(\lambda) (E_\lambda e_y)^* (E_\lambda e_x) \\
        &= \sum_{\lambda \in \Theta_+}  (E_\lambda e_y)^* (E_\lambda e_x)
        +\sum_{\lambda \in \Theta_-} - (E_\lambda e_y)^* (E_\lambda e_x) \tag{by~\ref{pst:C:b} and~\ref{pst:C:c}} \\
        &= \sum_{\lambda \in \Theta_+}  \gamma (E_\lambda e_y)^* (E_\lambda e_y)
        +\sum_{\lambda \in \Theta_-} \gamma (E_\lambda e_y)^* (E_\lambda e_y) \\
        &= \gamma \sum_{\lambda \in \spec(P)} (E_\lambda e_y)^* (E_\lambda e_y) \\
        &=\gamma.
    \end{align*}
    By Lemma~\ref{1201-2}, we derive~\ref{pst:A} as desired.

    \TB{
    Finally, we assume that $\Gamma$ is regular of valency $k$ and~\ref{pst:A} holds, and prove $\gamma = 1$ in~\ref{pst:B} and~\ref{pst:C}.
    By~\ref{pst:A}, we have $U^\tau d^* e_x = \gamma d^* e_y$ for some $\gamma \in \{\pm 1\}$.
    As discussed above, \ref{pst:B} and~\ref{pst:C} hold for this $\gamma$.
    By the assumption that $\Gamma$ is regular, we see that $P = A/k$,     
    where $A$ is the adjacency matrix of $\Gamma$.
    Hence, $1$ is an eigenvector of $P$, and the all-ones vector $\mathbf{j}$ is an eigenvector of $P$ belonging to $1$.
    This implies $1 \in \Theta_P(e_x)$.
    By~\ref{pst:C:b} and~\ref{pst:C:c} in \ref{pst:C}, $E_1 e_x = \gamma E_1 e_y$ holds.
    Thus, 
    \begin{align*}
        1 = \langle \mathbf{j} , e_x \rangle
        = \langle \mathbf{j} , E_1 e_x \rangle
        = \langle \mathbf{j} , \gamma E_1 e_y \rangle
        = \gamma \langle \mathbf{j} , e_y \rangle 
        = \gamma.
    \end{align*}
    Therefore, $\gamma = 1$ holds.}\MNOTE{Change A.9}
\end{proof}

Currently, we have not found examples of non-regular graphs that admit perfect state transfer with $\gamma = -1$.

\section{Circulant graphs with valency $3$}

In this section,
we show that $3$-regular circulant graphs do not admit perfect state transfer.
We note from~(\ref{1206-1}) that any 3-regular circulant graph $X(\MB{Z}_n, S)$ requires that $n$ is even, $a \neq 0, \frac{n}{2}$ and $S = \{\pm a, \frac{n}{2}\}$.

\TR{We remark some notations.}\MNOTE{Change A.10}
Let $a',b' \in \MB{Z}$, and set $a := a' + n\MB{Z}$ and $b := b' + n\MB{Z}$.
In the case of $n \in 2\MB{Z}$, $a$ is said to be \emph{odd} if $a'$ is an odd integer, and \emph{even} otherwise.
\TR{Two }\TB{The}\MNOTE{Change A.11 } elements $a$ and $b$ are said to be \emph{coprime} if the greatest common divisor of $a'$, $b'$ and $n$ equals $1$.
In other words, they are coprime if $n_1 a + n_2 b = 1$ in $\MB{Z}_n$ for some integers $n_1$ and $n_2$.
Also, for a function $f$, we will write $f(a')$ as $f(a)$ if $f(a')=f(a'')$ for any $a'' \in a$.
For example, $e^{\frac{2\pi}{n}a} = e^{\frac{2\pi}{n}a'}$ and $\cos(\frac{2\pi a}{n}) = \cos(\frac{2\pi a'}{n})$.

Let $X = X(\MB{Z}_{2l}, \{\pm a, l \} )$ be a $3$-regular circulant graph,
and let $P = P(X)$.
Then by Lemma~\ref{lem:ortho}, the eigenvalues of $P$ are
\[
\lambda_j = \frac{\zeta_{2l}^{aj} + \zeta_{2l}^{-aj} + \zeta_{2l}^{jl}}{3}
= \frac{2\cos \frac{aj}{l}\pi + (-1)^j}{3}
\]
for $j \in [2l]$.
Note that for $X$ to be connected, $a$ and $l$ are coprime since there exists a path from $0$ to $1$, i.e., there exist \TB{non-negative} integers $n_1$ and $n_2$ such that $n_1 a + n_2 l = 1$ in $\MB{Z}_{2l}$.
\TB{Here, the length of the path is $n_1+n_2$.}\MNOTE[green]{Change B.17}

\begin{defi}
	Let $r$ be a rational number $p/q$ with $p \in \MB{Z}$ and $q \in \MB{Z}_{\geq 1}$ where $p$ and $q$ are coprime.
	Then define $N(r) := q$.
\end{defi}

\begin{lem}[Fact~1.1 in \cite{berger2018linear}] \label{0115-2}
Let $r \in \mathbb{Q}$.
The numbers $1$ and $\cos (r \pi)$ are $\mathbb{Q}$-linearly independent if and only if $N(r) \geq 4$.
In particular, $\MB{Q} \cap \{ \cos(r \pi) \mid r \in \MB{Q} \} = \{0,\pm 1 ,\pm \frac{1}{2}\}.$
\end{lem}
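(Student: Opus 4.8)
The plan is to reduce this statement to Niven's theorem on the rational values of the cosine. First I would dispose of the linear-independence formulation: $1$ and $\cos(r\pi)$ are $\MB{Q}$-linearly dependent precisely when $\cos(r\pi) \in \MB{Q}$. Indeed, a nontrivial relation $a + b\cos(r\pi) = 0$ with $a,b \in \MB{Q}$ forces $b \neq 0$ (if $b=0$ then $a=0$ too, contradicting nontriviality), whence $\cos(r\pi) = -a/b \in \MB{Q}$; conversely, if $\cos(r\pi) = c \in \MB{Q}$ then $c \cdot 1 + (-1)\cdot \cos(r\pi) = 0$ is a nontrivial relation. Hence $1$ and $\cos(r\pi)$ are $\MB{Q}$-linearly independent if and only if $\cos(r\pi) \notin \MB{Q}$, and it suffices to prove both the explicit list of rational values and the equivalence $\cos(r\pi) \notin \MB{Q} \iff N(r) \geq 4$.

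Next I would establish the ``in particular'' clause, which is exactly Niven's theorem, by an algebraic-integer argument. Write $\theta = r\pi$ and $x = 2\cos\theta$. Define auxiliary polynomials by $p_0(X) = 2$, $p_1(X) = X$, and $p_{n+1}(X) = X p_n(X) - p_{n-1}(X)$. The product-to-sum identity $2\cos\theta \cdot 2\cos(n\theta) = 2\cos((n+1)\theta) + 2\cos((n-1)\theta)$ shows $p_n(2\cos\theta) = 2\cos(n\theta)$ for every $n$ (so in fact $p_n(X) = 2T_n(X/2)$), and a straightforward induction on the recurrence shows that each $p_n$ is monic of degree $n$ with integer coefficients. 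Writing $r = p/q$ in lowest terms, so $N(r) = q$ and $q\theta = p\pi$, we obtain $p_q(x) = 2\cos(q\theta) = 2\cos(p\pi) = \pm 2 \in \MB{Z}$. Thus $x$ is a root of the monic integer polynomial $p_q(X) \mp 2$, so $x = 2\cos\theta$ is an algebraic integer.

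Now suppose additionally that $\cos(r\pi) \in \MB{Q}$. Then $x = 2\cos(r\pi)$ is a rational algebraic integer, hence an ordinary integer, since $\MB{Z}$ is integrally closed in $\MB{Q}$. As $|x| \leq 2$, this forces $x \in \{0, \pm 1, \pm 2\}$ and therefore $\cos(r\pi) \in \{0, \pm \tfrac{1}{2}, \pm 1\}$; the reverse inclusion is immediate by direct evaluation, which proves the displayed equality $\MB{Q} \cap \{\cos(r\pi) \mid r \in \MB{Q}\} = \{0, \pm 1, \pm \tfrac{1}{2}\}$.

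Finally I would match each admissible value to $N(r)$. Solving for rational $r$, the value $\cos(r\pi) = \pm 1$ occurs exactly when $r \in \MB{Z}$, i.e.\ $N(r)=1$; the value $0$ exactly when $r = p/2$ with $p$ odd, i.e.\ $N(r) = 2$; and the values $\pm \tfrac{1}{2}$ exactly when $r = p/3$ with $\gcd(p,3)=1$, i.e.\ $N(r) = 3$. Hence $\cos(r\pi) \in \MB{Q}$ forces $N(r) \leq 3$, and conversely each of $N(r) \in \{1,2,3\}$ yields one of these rational values; equivalently $\cos(r\pi) \notin \MB{Q} \iff N(r) \geq 4$, which by the first paragraph is precisely the claimed linear-independence criterion. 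I expect the only substantive step to be the algebraic-integer argument of the second paragraph; the induction that the $p_n$ are monic with integer coefficients and the appeal to integral closedness of $\MB{Z}$ are routine, and the remaining work is bookkeeping over the five rational cosine values.
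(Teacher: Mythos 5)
Your proof is correct. Note, though, that the paper does not prove this lemma at all: it is quoted verbatim as Fact~1.1 of the cited reference \cite{berger2018linear}, so there is no internal proof to match against. What you have done is supply a complete, self-contained proof of the cited fact, and your route is the classical one: reduce linear independence of $1$ and $\cos(r\pi)$ to irrationality of $\cos(r\pi)$, prove Niven's theorem by observing that $2\cos(r\pi)$ is an algebraic integer (via the monic integer recursion $p_{n+1}=Xp_n-p_{n-1}$, i.e.\ $p_n(X)=2T_n(X/2)$), conclude that a rational value forces $2\cos(r\pi)\in\{0,\pm1,\pm2\}$, and then match the five values to $N(r)\in\{1,2,3\}$. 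Every step checks out: the dependence/rationality equivalence, the induction that $p_n$ is monic with integer coefficients for $n\geq 1$ (your blanket claim is off only for the irrelevant $p_0=2$, which has leading coefficient $2$; you never use it since $N(r)\geq 1$), the integral-closedness of $\MB{Z}$ in $\MB{Q}$, and the case bookkeeping $\cos(r\pi)=\pm1\iff N(r)=1$, $=0\iff N(r)=2$, $=\pm\tfrac12\iff N(r)=3$. Your argument also harmonizes with the paper's own toolkit: it uses the same Chebyshev polynomials $T_n$ that drive Lemma~\ref{1201-1}, and the same observation the paper makes in Section~8 that $2\cos\frac{2k}{m}\pi=\zeta_m^k+\zeta_m^{-k}$ is an algebraic integer. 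What the citation buys the paper is brevity and access to the stronger three-term result (Lemma~\ref{1208-1}, Theorem~1.2 of the same reference), which your elementary method does not yield; what your proof buys is independence from the reference for this particular two-term fact.
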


\begin{lem}[Theorem~1.2 in \cite{berger2018linear}] \label{1208-1}
Let $r_1, r_2$ be rational numbers such that neither $r_1-r_2$ nor $r_1+r_2$ is an integer. 
Then the following are equivalent.
\begin{enumerate}
\item The numbers $1, \cos \left(r_1 \pi\right)$ and $\cos \left(r_2 \pi\right)$ are $\mathbb{Q}$-linearly independent.
\item $N\left(r_j\right) \geq 4$ for $j \in\{1,2\}$, and $\left(N\left(r_1\right), N\left(r_2\right)\right) \neq(5,5)$.
\end{enumerate}
\end{lem}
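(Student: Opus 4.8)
The statement is purely number-theoretic, so my plan is to translate everything into cyclotomic fields and argue by Galois theory. Writing $\cos(r\pi)=\tfrac12(\eta+\eta^{-1})$ for a suitable root of unity $\eta$, every value $\cos(r\pi)$ lies in the maximal real subfield $\mathbb{Q}(\zeta_N)^{+}$ of a cyclotomic field, which is Galois over $\mathbb{Q}$ with abelian group $(\mathbb{Z}/N\mathbb{Z})^{\times}/\{\pm1\}$. A direct degree computation gives $[\mathbb{Q}(\cos(r\pi)):\mathbb{Q}]=\varphi(2N(r))/2$ for $N(r)\ge 2$. Together with Lemma~\ref{0115-2} this records the two facts I would use repeatedly: $\cos(r\pi)$ is rational exactly when $N(r)\le 3$, and it has degree $2$ exactly when $N(r)\in\{4,5,6\}$, in which cases $\mathbb{Q}(\cos(r\pi))$ equals $\mathbb{Q}(\sqrt2)$, $\mathbb{Q}(\sqrt5)$, $\mathbb{Q}(\sqrt3)$ respectively.

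First I would treat the implication (i)$\Rightarrow$(ii) in contrapositive form, by exhibiting dependence for the two families of exceptions. If $N(r_j)\le 3$ for some $j$, then $\cos(r_j\pi)\in\mathbb{Q}$, so $1$ and $\cos(r_j\pi)$ are already dependent. If $(N(r_1),N(r_2))=(5,5)$, then both cosines lie in the two-dimensional $\mathbb{Q}$-vector space $\mathbb{Q}(\sqrt5)$, so the three numbers $1,\cos(r_1\pi),\cos(r_2\pi)$ are dependent; explicitly $\cos(\pi/5)-\cos(2\pi/5)=\tfrac12$.

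The substantive direction is (ii)$\Rightarrow$(i). Assume $N(r_1),N(r_2)\ge 4$ and $(N(r_1),N(r_2))\neq(5,5)$, and suppose toward a contradiction that $c_0+c_1\cos(r_1\pi)+c_2\cos(r_2\pi)=0$ is a nontrivial rational relation. Since both cosines are irrational, $c_1,c_2\neq0$, so $\cos(r_2\pi)=u+v\cos(r_1\pi)$ with $u,v\in\mathbb{Q}$ and $v\neq0$; in particular $\mathbb{Q}(\cos(r_1\pi))=\mathbb{Q}(\cos(r_2\pi))=:K$. Applying $\mathrm{Gal}(K/\mathbb{Q})$ and using that the full-degree cosine values of $K$ form a single Galois orbit $O\subset[-1,1]$, I get that the affine map $x\mapsto u+vx$ permutes the finite set $O$. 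A monotone affine self-bijection of a finite real set is either the identity or a reflection $x\mapsto 2c-x$ about the (rational) mean $c=\mu(N)/\varphi(N)$ of $O$. The identity forces $\cos(r_1\pi)=\cos(r_2\pi)$, hence $r_1\pm r_2\in\mathbb{Z}$, contrary to hypothesis. In the reflection case, when $K$ is quadratic one recovers exactly the $N\in\{4,5,6\}$ bookkeeping: for $N=4,6$ only $\pm\tfrac{\sqrt2}{2},\pm\tfrac{\sqrt3}{2}$ occur, so $\cos(r_2\pi)=\pm\cos(r_1\pi)$ and again $r_1\pm r_2\in\mathbb{Z}$, while $N=5$ is the excluded case.

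The remaining case, and what I expect to be the main obstacle, is a reflection when $[K:\mathbb{Q}]\ge 3$ and $4\nmid N$. Here the reflection is realized by a Galois involution $\sigma$ with $\sigma(x)=2c-x$ for all $x\in O$, equivalently the identity $\cos(2\pi j/N)+\cos(2\pi/N)=2c$ for a nontrivial residue $j$. Clearing denominators recasts this as a vanishing rational combination $\zeta_N^{j}+\zeta_N^{-j}+\zeta_N+\zeta_N^{-1}-2c=0$ of at most five roots of unity, and the natural engine to close the argument is the Conway--Jones classification of minimal vanishing sums, every such sum being assembled from rotated copies of $1+\zeta_p+\cdots+\zeta_p^{p-1}=0$. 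Matching the admissible configurations of length $\le 5$ against this symmetric relation, the survivors force either some $N(r_j)\le 3$ (the $p\in\{2,3\}$ relations) or place both roots of unity among the primitive tenth roots, i.e.\ the excluded $(5,5)$ case; equivalently one can kill it by character orthogonality on $(\mathbb{Z}/N\mathbb{Z})^{\times}$, which shows the four frequencies cannot cancel unless $j\equiv\pm1$. The delicate point throughout is precisely this exclusion of short vanishing sums once the degree exceeds $2$: it is where the fine arithmetic of roots of unity, rather than the soft linear algebra of the earlier steps, is indispensable.
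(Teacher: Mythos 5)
You should first note the ground truth here: the paper does not prove this lemma at all --- it is imported verbatim as Theorem~1.2 of \cite{berger2018linear} --- so your argument has to stand entirely on its own, and it does not. The parts that work: the degree formula $[\mathbb{Q}(\cos r\pi):\mathbb{Q}]=\varphi(2N(r))/2$, the contrapositive direction (rationality for $N\le 3$, dependence inside the two-dimensional space $\mathbb{Q}(\sqrt5)$ for $(5,5)$), the reduction of a nontrivial relation to $\cos(r_2\pi)=u+v\cos(r_1\pi)$ with $v\neq 0$ and $\mathbb{Q}(\cos r_1\pi)=\mathbb{Q}(\cos r_2\pi)=K$, and the disposal of the quadratic cases $N\in\{4,6\}$, where the hypothesis $r_1\pm r_2\notin\mathbb{Z}$ does the killing. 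All of that is sound.

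The genuine gap is your pivotal claim that ``the full-degree cosine values of $K$ form a single Galois orbit $O$,'' from which you conclude that $x\mapsto u+vx$ is a self-bijection of a finite real set and hence the identity or a reflection. That claim is false whenever the denominator is odd: already in $K=\mathbb{Q}(\sqrt5)$ the generators split into the two orbits $\{\cos(\pi/5),\cos(3\pi/5)\}$ and $\{\cos(2\pi/5),\cos(4\pi/5)\}$, and in $K=\mathbb{Q}(\zeta_7)^{+}$ into $\{\cos(b\pi/7): b \text{ odd}\}$ and $\{\cos(2\pi a/7)\}$. Galois equivariance only gives that the affine map carries the orbit $O_1$ of $\cos(r_1\pi)$ bijectively onto the orbit $O_2$ of $\cos(r_2\pi)$; when $O_1\neq O_2$ the min/max argument behind your identity-or-reflection dichotomy has no purchase, since an affine bijection between two \emph{different} finite sets is a priori unconstrained. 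This is not a fringe case but the heart of the theorem: take $(r_1,r_2)=(1/7,2/7)$, which satisfies all hypotheses. There $O_2=-O_1$, so affine bijections $O_1\to O_2$ certainly exist; comparing first and second moments of the orbits (both orbit sums are $\pm\tfrac12$ and both sums of squares equal $\tfrac54$) leaves the candidates $v=\pm1$, $u\in\{-\tfrac13,0\}$ alive, so no soft moment argument can finish --- one must exploit the specific $\sigma$-equivariant pairing, e.g.\ via a third moment or a genuine vanishing-sums-of-roots-of-unity analysis. Your closing paragraph defers exactly this to Conway--Jones, but only sketches it (``matching the admissible configurations''), and only for the relation $\cos(2\pi j/N)+\cos(2\pi/N)=2c$ that arises in your reflection-within-one-orbit scenario, which the $(1/7,2/7)$ configuration never enters; the aside that one could ``equivalently'' use character orthogonality is likewise unsubstantiated. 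So the main implication (ii)$\Rightarrow$(i) is not proved as written, which you half-acknowledge in your final sentence: the ``delicate point'' you flag is precisely the missing proof.
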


\begin{thm} \label{0331-2}
Let $X = X(\MB{Z}_{2l}, \{\pm a, l \} )$ be a connected $3$-regular graph,
where $a \in \{1, \dots, l-1\}$.
Then, perfect state transfer does not occur on $X$ between vertex type states.
\end{thm}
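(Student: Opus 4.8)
The plan is to turn the problem into a statement about eigenvalues and then show those eigenvalues cannot all be cosines of rational multiples of $\pi$. First I would apply Lemma~\ref{1214-3} together with the translation automorphism $\rho_{-x}$ and Theorem~\ref{1124-1} to reduce to $x=0$, $y=l$: if perfect state transfer occurs at all it must go from $d^*e_0$ to $d^*e_l$. Since $X$ is $3$-regular we have $P=\frac13 A$, so $\gamma=1$ in Theorem~\ref{pst}, and perfect state transfer at time $\tau$ is equivalent to condition~\ref{pst:C} with $\gamma=1$, i.e. $T_\tau(P)e_0=e_l$. Because $\Theta_P(e_0)=\spec(P)$ by Lemma~\ref{1206-3}, conditions~\ref{pst:C:a}, \ref{pst:C:b} and~\ref{pst:C:c} together force $T_\tau(\lambda)\in\{\pm1\}$, hence $\lambda=\cos\frac{k}{\tau}\pi$ for an integer $k$, for \emph{every} eigenvalue $\lambda$ of $P$. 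Thus a necessary condition for perfect state transfer is that every $\lambda_j=\frac{2\cos\frac{aj}{l}\pi+(-1)^j}{3}$ equals $\cos(r\pi)$ for some $r\in\MB{Q}$.

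The heart of the argument is to rule this out using the single eigenvalue $\lambda_1=\frac{2\cos\frac{a}{l}\pi-1}{3}$. Suppose $\lambda_1=\cos(r\pi)$ with $r\in\MB{Q}$. Then $3\cos(r\pi)-2\cos\frac{a}{l}\pi=-1$ is a nontrivial $\MB{Q}$-linear relation among $1$, $\cos(r\pi)$ and $\cos\frac{a}{l}\pi$. Connectivity gives $\gcd(a,l)=1$, so $N(\frac{a}{l})=l$. I would then split into the cases of Lemma~\ref{1208-1}: if $r\pm\frac{a}{l}\in\MB{Z}$ then $\cos(r\pi)=\pm\cos\frac{a}{l}\pi$, and substitution gives $\cos\frac{a}{l}\pi\in\{-1,\tfrac15\}$, both impossible (the first needs $a=l$, the second contradicts Lemma~\ref{0115-2}); otherwise $\MB{Q}$-linear dependence forces $N(r)\le 3$, or $l=N(\frac{a}{l})\le 3$, or $(N(r),l)=(5,5)$. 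In the case $N(r)\le 3$ we have $\cos(r\pi)\in\{0,\pm\frac12,\pm1\}$, and substituting into the relation and applying Lemma~\ref{0115-2} to the rational-angle cosine $\cos\frac{a}{l}\pi$ leaves only $\cos\frac{a}{l}\pi=\frac12$, i.e. $l=3$. Hence for every $l\ge 4$ with $l\neq 5$ the value $\lambda_1$ is not of the form $\cos(r\pi)$, contradicting the necessary condition above and ruling out perfect state transfer.

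It then remains to dispose of the finitely many cases $l\in\{2,3,5\}$. For $l=2$ (so $X=K_4$) and for $l=3$ with $a=2$ one computes directly $\lambda_1=-\frac13$ and $\lambda_1=-\frac23$ respectively, neither a rational cosine by Lemma~\ref{0115-2}. The exceptional value $l=5$ is precisely the one permitted by the $(5,5)$ clause of Lemma~\ref{1208-1}; here I would evaluate $\lambda_1=\frac{2\cos\frac{a}{5}\pi-1}{3}$ for $a\in\{1,2,3,4\}$ and observe, using the explicit values of $\cos\frac{a}{5}\pi$ in $\MB{Q}(\sqrt5)$, that none equals any $\cos\frac{k}{5}\pi$, so $\lambda_1$ again fails to be a rational cosine. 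The genuinely different case is $X(\MB{Z}_6,\{\pm1,3\})$ (that is $l=3$, $a=1$), where $\spec(P)=\{1,0,-1\}$ consists entirely of rational cosines, so the spectral obstruction degenerates. For this graph I would instead use condition~\ref{pst:C:a}: the eigenvalue $0$ has index set $I_0=\{1,2,4,5\}$, which contains both parities, so $E_0e_0=\frac16\sum_{j\in I_0}u_j$ and $E_0e_3=\frac16\sum_{j\in I_0}(-1)^j u_j$ cannot be scalar multiples of one another (the $u_j$ being linearly independent), violating $E_0e_0=\pm E_0e_3$.

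The main obstacle is the number-theoretic bookkeeping in the second paragraph: the linear-independence criterion of Lemma~\ref{1208-1} has real exceptions (small denominators and the $(5,5)$ case), each of which must be cleared either through Lemma~\ref{0115-2} on rational-angle cosines or, for the single degenerate graph $X(\MB{Z}_6,\{\pm1,3\})$, by retreating to the parity obstruction coming from the automorphism-theoretic condition~\ref{pst:C:a}. Checking that the spectral argument and this parity argument \emph{jointly} cover every connected $3$-regular circulant graph, with no case slipping through both, is the part demanding the most care.
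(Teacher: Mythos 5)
Your proposal is correct, and its skeleton coincides with the paper's own proof: both isolate the single eigenvalue $\lambda_1=\frac{1}{3}\left(2\cos\frac{a}{l}\pi-1\right)$, which lies in $\Theta_P(e_x)$ by Lemma~\ref{1206-3}, use Theorem~\ref{pst} to force $\lambda_1=\cos(r\pi)$ with $r\in\MB{Q}$, dispose of the degenerate case $r\pm\frac{a}{l}\in\MB{Z}$ via Lemma~\ref{0115-2} (your values $\cos\frac{a}{l}\pi\in\{-1,\frac15\}$ are the paper's $\cos\frac{s}{\tau}\pi=-\frac15$ in disguise), and then run the identical trichotomy from Lemma~\ref{1208-1} ($N(\frac{a}{l})<4$, $N(r)<4$, or the $(5,5)$ case, the last killed by comparing coefficients of $\sqrt5$). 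Where you genuinely diverge is in the two exceptional graphs surviving the branch $l\le 3$: the paper handles $K_4$ ($l=2$) and $K_{3,3}$ ($l=3$, $a=1$) by citing Section~4.3 and Lemma~4.2 of \cite{kubota2022perfect}, whereas you keep the argument self-contained. For $K_4$ you simply note $\lambda_1=-\frac13$ is rational but not in $\{0,\pm1,\pm\frac12\}$, so the same spectral obstruction already applies; for $K_{3,3}$, where the spectral obstruction degenerates because $\spec(P)=\{1,0,-1\}$ consists entirely of rational cosines, you first pin the target to $y=x+l$ via Lemma~\ref{1214-3} and Theorem~\ref{1124-1} (a reduction the paper's proof of this theorem never needs) and then break condition~\ref{pst:C:a} of Theorem~\ref{pst} by parity: with $\MC{I}_0=\{j\in[6]\mid\lambda_j=0\}=\{1,2,4,5\}$ containing both parities, $E_0e_0=\frac16\sum_{j\in\MC{I}_0}u_j$ and $E_0e_3=\frac16\sum_{j\in\MC{I}_0}(-1)^ju_j$ cannot satisfy $E_0e_0=\pm E_0e_3$, since the $u_j$ are linearly independent. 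That parity device is precisely the one the paper deploys later for the $4$-regular case $l\equiv 0\pmod 4$, so your route buys a citation-free proof at the cost of the extra antipodal reduction, while the paper's buys brevity; I verified your remaining computations ($\lambda_1=-\frac23$ for $l=3$, $a=2$; the impossibility $3\epsilon_2=2\epsilon_4$ for $l=5$), and the four branches do jointly cover every connected case.
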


\begin{proof}
Let $x,y \in \MB{Z}_{2l}$ be distinct vertices in $X$.
We suppose that perfect state transfer occurs from $d^*e_x$ to $d^*e_y$ at time $\tau$, and want to derive a contradiction.
Let $P = P(X)$.
By Lemma~\ref{1206-3},
the eigenvalue $\lambda_1 = \frac{1}{3}(2\cos \frac{a}{l}\pi -1)$ of $P$ is in the eigenvalue support $\Theta_{P}(e_x)$.
Thus, Theorem~\ref{pst} implies that there exists an integer $s$ such that 
\begin{equation}\label{1208-2}
\frac{1}{3}(2\cos \frac{a}{l}\pi -1) = \cos \frac{s}{\tau}\pi.
\end{equation}
\TB{In particular, this means that the elements $1$, $\cos \frac{a}{l} \pi$ and $\cos \frac{a}{\tau}\pi$ are not $\mathbb{Q}$-linearly independent.}\MNOTE{Change A.12}

To apply Lemma~\ref{1208-1},
we want to check that neither $\frac{a}{l} + \frac{s}{\tau}$ nor $\frac{a}{l} - \frac{s}{\tau}$ is an integer.
We suppose that either $\frac{a}{l} + \frac{s}{\tau}$ or $\frac{a}{l} - \frac{s}{\tau}$ is an integer.
Then we have $\cos \frac{a}{l}\pi = \pm \cos \frac{s}{\tau}\pi$.
If $\cos \frac{a}{l}\pi = - \cos \frac{s}{\tau}\pi$,
then Equality~(\ref{1208-2}) if and only if $\cos \frac{s}{\tau}\pi = -\frac{1}{5}$.
This contradicts $\cos \MB{Q}\pi \cap \MB{Q} = \{\pm 1, \pm \frac{1}{2}, 0 \}$ which is asserted in Lemma~\ref{0115-2}.
Thus, we have $\cos \frac{a}{l}\pi \TR{=}\TB{\neq}\MNOTE[green]{Change~B.18} - \cos \frac{s}{\tau}\pi$.
Then Equality~(\ref{1208-2}) if and only if $\cos \frac{a}{l}\pi = -1$.
In particular, we have $a \in l\MB{Z}$,
but this is inconsistent with the way $a$ is taken.
We are therefore in a situation to apply Lemma~\ref{1208-1}, and hence either $N(\frac{a}{l}) < 4$, $N(\frac{s}{\tau}) < 4$, or $N(\frac{a}{l}) = N(\frac{s}{\tau}) = 5$ happens.

First, we want to reject $N(\frac{a}{l}) < 4$.
We assume $N(\frac{a}{l}) < 4$.
Since $a$ and $l$ are coprime,
we have $l = N(\frac{a}{l}) < 4$, i.e., $l \leq 3$.
If $l = 2$, then $X=X(\MB{Z}_{4}, \{\pm 1, 2\})$ is isomorphic to the complete graph $K_4$ with $4$ vertices.
However, $K_4$ does not admit perfect state transfer.
See Section~4.3 in~\cite{kubota2022perfect} for details.
Let $l = 3$.
If $a = 1$, then $X=X(\MB{Z}_{6}, \{\pm 1, 3\})$ is isomorphic to the complete bipartite graph $K_{3,3}$.
It has also been shown in Lemma~4.2 of \cite{kubota2022perfect} that this graph does not admit perfect state transfer.
If $a = 2$, then we have by~\eqref{1208-2} $\cos\frac{s}{\tau}\pi = \frac{-1+2\cos\frac{2}{3}\pi}{3} = -\frac{2}{3}$.
However, this contradicts Lemma~\ref{0115-2}

Next, we want to reject $N(\frac{s}{\tau}) < 4$.
We assume $N(\frac{s}{\tau}) < 4$.
Then we have $\cos \frac{s}{\tau} \pi \in \MB{Q}$ by Lemma~\ref{0115-2},
so from Equality~(\ref{1208-2}) we obtain $\cos \frac{a}{l} \pi \in \MB{Q}$.
Thus we have $N(\frac{a}{l}) < 4$ by Lemma~\ref{0115-2}.
The situation has been returned to the case $N(\frac{a}{l}) < 4$,
which has already been rejected.

Finally, 
we want to reject the case $N(\frac{a}{l}) = N(\frac{s}{\tau}) = 5$.
Then we have
\[ \cos\frac{a}{l}\pi, \cos\frac{s}{\tau}\pi \in \left\{ \frac{\pm 1 \pm \sqrt{5}}{4} \right\}. \]
However, it can be seen by inspecting the coefficient of $\sqrt{5}$ that none of the values satisfy Equality~(\ref{1208-2}).
\end{proof}

\section{Circulant graphs with valency $4$ and $a+b = l$}

Let $X=X(\MB{Z}_{2l}, \{\pm a, \pm b\})$ be a $4$-regular circulant graph.
Zhan \cite{zhan2019infinite} proved that $X$ admits perfect state transfer between vertex type states when $l$ is odd and $a+b=l$.
We completely characterize $X$ which admits perfect state transfer between vertex type states \TB{for every $l$}\MNOTE[green]{Change B.19}.
First, we consider the case of $l \not\equiv 0 \pmod 4$ and $a+b = l$.

\begin{lem} \label{1214-4}
Let $X=X(\MB{Z}_{2l}, \{\pm a, \pm b\})$ be a $4$-regular circulant graph.
Assume that $a + b = l$ with $l \not\equiv 0 \pmod 4$.
\begin{enumerate}
	\item If $l \equiv 2 \pmod 4$, then $X$ admits perfect state transfer from $d^*e_x$ to $d^*e_{x+l}$ at time $l$. \label{1214-4:2}
 	\item If $l$ is odd, then $X$ admits perfect state transfer from $d^*e_x$ to $d^*e_{x+l}$ at time $2l$.	\label{1214-4:1}
\end{enumerate}
Moreover, if $X$ is connected, then these times at which perfect state transfer occurs are minimum.
\end{lem}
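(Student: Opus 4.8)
The plan is to run everything through Theorem~\ref{pst}. Since $X$ is $4$-regular we have $P=\tfrac14 A$, so the sign $\gamma$ equals $1$; and by Theorem~\ref{1124-1} together with Lemma~\ref{1214-3} it suffices to treat the initial state $d^*e_0$, whose only admissible target is $d^*e_l$. Thus the entire lemma reduces to determining for which $\tau$ one has $T_\tau(P)e_0=e_l$, and I would organize the argument around the eigenvalue support of $e_0$.

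The first step is eigenvalue bookkeeping. By Lemma~\ref{lem:ortho} the eigenvalue attached to $u_j$ is $\lambda_j=\tfrac14(\zeta_{2l}^{aj}+\zeta_{2l}^{-aj}+\zeta_{2l}^{bj}+\zeta_{2l}^{-bj})$. Substituting $b=l-a$ and using $\zeta_{2l}^{lj}=(-1)^j$ collapses this to the clean form
\[
\lambda_j=\frac{1+(-1)^j}{2}\cos\frac{aj}{l}\pi .
\]
Hence $\lambda_j=0$ for every odd $j$, while each even $j=2k$ contributes $\cos\frac{2ak}{l}\pi$. The decisive observation — and the only place the hypothesis $l\not\equiv 0\pmod 4$ enters — is that no even index can produce the eigenvalue $0$: solving $\cos\frac{2ak}{l}\pi=0$ would force an even integer to equal an odd one once $l$ is odd or $l\equiv 2\pmod 4$. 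Consequently the index set $I_0=\{j:\lambda_j=0\}$ consists of exactly the odd $j$, and every nonzero eigenvalue is supported only on even indices. I expect this to be the main obstacle, in the sense that it is the conceptual crux: it is precisely what makes the zero eigenspace behave coherently, and it is what distinguishes this case from $l\equiv 0\pmod 4$.

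The second step compares the projections at $0$ and $l$. Writing $E_\lambda=\frac{1}{2l}\sum_{j\in I_\lambda}u_ju_j^*$ gives $E_\lambda e_0=\frac{1}{2l}\sum_{j\in I_\lambda}u_j$, and since $\overline{(u_j)_l}=\zeta_{2l}^{-jl}=(-1)^j$, also $E_\lambda e_l=\frac{1}{2l}\sum_{j\in I_\lambda}(-1)^j u_j$. Combining these with the parity structure of the first step yields $E_\lambda e_l=E_\lambda e_0$ for every nonzero $\lambda$ and $E_0 e_l=-E_0 e_0$. Because $\Theta_P(e_0)=\spec(P)$ by Lemma~\ref{1206-3}, all these projections are nonzero and live in mutually orthogonal eigenspaces, so by Lemma~\ref{1203-1} the equation $T_\tau(P)e_0=e_l$ holds if and only if $T_\tau(\lambda)=1$ for every nonzero $\lambda\in\spec(P)$ and $T_\tau(0)=-1$. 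For existence I would then use $T_\tau(\cos\theta)=\cos\tau\theta$: for a nonzero eigenvalue $\cos\frac{2ak}{l}\pi$ one gets $T_l=\cos(2ak\pi)=1$ and $T_{2l}=\cos(4ak\pi)=1$, while $T_\tau(0)=\cos\frac{\tau\pi}{2}=-1$ exactly when $\tau\equiv 2\pmod 4$. Writing $l=2l'$ with $l'$ odd shows $\tau=l$ works in case~\ref{1214-4:2}, and $l$ odd forces $\tau=2l$ in case~\ref{1214-4:1}.

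Finally, for minimality I would note that connectivity is equivalent to $\gcd(a,l)=1$. The requirement $T_\tau(\lambda)=1$ for all nonzero $\lambda$ amounts to $l\mid ak\tau$ for all relevant $k$; taking $k=1$ (nonzero by the first step) together with $\gcd(a,l)=1$ forces $l\mid\tau$, and conversely $l\mid\tau$ suffices. Hence the set of admissible times is exactly $\{\tau: l\mid\tau,\ \tau\equiv 2\pmod 4\}$. Writing $\tau=2u$ with $u$ odd, the divisibility $l\mid 2u$ reduces to $l\mid u$ when $l$ is odd, giving minimum $\tau=2l$, and to $l'\mid u$ when $l=2l'$, giving minimum $\tau=2l'=l$. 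This matches the two stated times and completes the plan.
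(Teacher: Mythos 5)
Your proposal is correct, and it runs on the same engine as the paper's proof (Theorem~\ref{pst} with $\gamma=1$ by regularity, the collapse $\lambda_j=\frac{1+(-1)^j}{2}\cos\frac{aj}{l}\pi$ via $b=l-a$, the identity $\MC{I}_0=\{1,3,\dots,2l-1\}$ when $l\not\equiv 0\pmod 4$, and the projection relations $E_\lambda e_l=E_\lambda e_0$ for $\lambda\neq 0$, $E_0e_l=-E_0e_0$), but it departs from the paper in two worthwhile ways. First, you treat both parity cases uniformly: the paper only proves case~(i) in full and defers case~(ii) (existence at time $2l$ for $l$ odd) to Zhan's Theorem~6.1, whereas your parity argument ($4ak=ls$ forcing even $=$ odd) covers $l$ odd and $l\equiv 2\pmod 4$ simultaneously, so you get a self-contained proof of both. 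Second, your minimality argument is cleaner and actually stronger: using $\Theta_P(e_0)=\spec(P)$ (Lemma~\ref{1206-3}) and orthogonality of the nonzero projections, you upgrade the paper's necessary condition $T_\tau(\lambda)=\pm 1$ to an exact characterization --- PST at time $\tau$ iff $T_\tau(\lambda)=1$ for all nonzero $\lambda$ and $T_\tau(0)=-1$ --- and then read off the full set of admissible times $\{\tau : l\mid\tau,\ \tau\equiv 2\pmod 4\}$ by taking the single eigenvalue $\cos\frac{2a}{l}\pi$ (index $j=2$) with $\gcd(a,l)=1$; the paper instead works with the eigenvalue at the even index $\frac{l}{2}-1$ and a two-step coprimality argument with $\frac{l}{2}$, and only asserts minimality for case~(ii) ``in the same manner.'' The one assertion you should spell out is that connectivity is equivalent to $\gcd(a,l)=1$: since $b=l-a$, one has $\gcd(a,b,2l)=\gcd(a,l-a,2l)=\gcd(a,l)$, which is a one-line check but is doing real work in your $k=1$ step (and also justifies, more transparently than the paper's phrasing, the coprimality the paper extracts from connectivity).
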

\begin{proof}
First, we prove the case~\ref{1214-4:2}.
Let $l \equiv 2 \pmod 4$.
By Lemma~\ref{1212-1}~\ref{1214-4:1} and Theorem~\ref{1124-1},
we may assume that $x=0$ without loss of generality.
Let $P = P(X)$.
%We therefore prove the case~\ref{1214-4:2}.
%By Lemma~\ref{1212-1}~\ref{1214-4:1} and Theorem~\ref{1124-1},
%we may assume that $x=0$ without loss of generality.
%Let $P = P(X)$.
%To prove~\ref{1214-4:2},
We will check the three conditions \ref{pst:C:a}--\ref{pst:C:c} of~\ref{pst:C} in Theorem~\ref{pst}.
Before that, we calculate eigenvalues and some projections.
The eigenvalues of $P$ are
\begin{equation} \label{1213-1}
\lambda_j = \begin{cases}
0 \quad &\text{if $j$ is odd,} \\
\cos\frac{aj}{l}\pi \quad &\text{if $j$ is even}
\end{cases}
\end{equation}
for $j \in \TR{[2l-1] }\TB{[2l]}$\MNOTE{Change A.13}, because
\begin{align*}
\lambda_j &= \frac{\zeta_{2l}^{aj} + \zeta_{2l}^{-aj} + \zeta_{2l}^{bj} + \zeta_{2l}^{-bj}}{4} \tag{by Lemma~\ref{lem:ortho}} \\
&= \frac{2\cos \frac{aj}{l}\pi + 2\cos \frac{bj}{l}\pi}{4} \\
&= \frac{2\cos \frac{aj}{l}\pi + 2\cos \frac{(l-a)j}{l}\pi}{4} \tag{by $a+b=l$} \\
&= \frac{1+(-1)^j}{2} \cos \frac{aj}{l}\pi.
\end{align*}
Next we verify
\begin{align}	\label{1213-1:1}
	E_\lambda e_0	= \begin{cases}	
		-E_\lambda e_l & \text{ if } \lambda = 0 \\
		E_\lambda e_l & \text{ otherwise }.
	\end{cases}
\end{align}
Recall the vectors $u_j$ defined by~(\ref{1212-2}).
We have
\begin{equation} \label{uj0}
\Span{u_j, e_0} = \zeta_{2l}^0 = 1
\end{equation}
and
\begin{equation} \label{ujl}
\Span{u_j, e_l} = \zeta_{2l}^{lj} = \zeta_{2}^j = (-1)^j.
\end{equation}
Let 
$\MC{I}_{\lambda} = \{ j \in [2l] \mid \lambda_j = \lambda \}$ for a real number $\lambda$.
%\MNOTE[green]{Change~B.20}
%\TB{
%\begin{align}
%    \MC{I}_{\lambda} = \{ j \in [2l] \mid \lambda_j = \lambda \} \text{ for a real number } \lambda.
%\end{align}
%}
We check
\begin{equation} \label{1213-2}
	\MC{I}_{0} = \{1,3, \dots, 2l-1\}.
\end{equation}
By~(\ref{1213-1}), we immediately have $\MC{I}_0 \supset \{1,3, \dots, 2l-1\}$.
We suppose that an even integer $j$ satisfies $\lambda_j = 0$.
Then by~(\ref{1213-1}) we have $0 = \lambda_j = \cos\frac{aj \pi}{l}$, so there exists an odd integer $s$ such that $\frac{aj \pi}{l} = \frac{s}{2}\pi$.
Thus, we obtain
$ j = \frac{ls}{2a} = \frac{\frac{l}{2}s}{a}.$
Since $l \equiv 2 \pmod 4$,
both $\frac{l}{2}$ and $s$ are odd,
which contradicts that $j$ is even.
Therefore, we have $\MC{I}_{0} = \{1,3, \dots, 2l-1\}$.
Let $\lambda \in \spec(P)$, and calculate $E_\lambda e_0$.
If $\lambda = 0$, then
\begin{align*}
\TR{n}\TB{2l}\MNOTE{Change A.14} \cdot E_0 e_0 &= \sum_{j \in \MC{I}_0}u_j u_j^* e_0 \tag{by (\ref{1225-1})} \\
&= \sum_{j \in \MC{I}_0}u_j \cdot 1 \tag{by (\ref{uj0})} \\
&= -\sum_{j \in \MC{I}_0}u_j \cdot (-1) \\
&= -\sum_{j \in \MC{I}_0}u_j u_j^* e_l \tag{by (\ref{ujl}) and (\ref{1213-2})} \\
&= -\TR{n}\TB{2l}\MNOTE{Change A.14} \cdot E_0 e_l. \tag{by (\ref{1225-1})}
\end{align*}
On the other hand, if $\lambda \neq 0$,
then from Equality~(\ref{1213-2}) it is guaranteed that the indices in the set $[2l] \setminus \MC{I}_0$ are even.
Thus we have
\begin{align*}
\TR{n}\TB{2l}\MNOTE{Change A.14} \cdot E_{\lambda} e_0 &= \sum_{j \in \MC{I}_{\lambda}}u_j u_j^* e_0 \tag{by (\ref{1225-1})} \\
&= \sum_{j \in \MC{I}_{\lambda}} u_j \cdot 1 \tag{by (\ref{uj0})} \\
&= \sum_{j \in \MC{I}_{\lambda}} u_j u_j^* e_l \tag{by (\ref{ujl})} \\
&= \TR{n}\TB{2l}\MNOTE{Change A.14} \cdot E_{\lambda} e_l.
\end{align*}

Next, we check the three conditions \ref{pst:C:a}--\ref{pst:C:c} of~\ref{pst:C} in Theorem~\ref{pst} with $\gamma=1$ and $\tau=l$.
First, \ref{pst:C:a} follows from~\eqref{1213-1:1}.
Next we check~\ref{pst:C:b}.
Note that $\Theta_P(e_0) = \spec(P)$ holds by Lemma~\ref{1206-3}.
Let $\lambda \in \spec(P)$, and assume $E_\lambda e_0 =  E_\lambda e_l$.
Then, $\lambda \neq 0$ follows from~\eqref{1213-1:1}, and this together with~\eqref{1213-2} implies that $\lambda = \lambda_j$ holds for some even $j$.
From~\eqref{1213-1}, we see that \ref{pst:C:b} is satisfied.
We check~\ref{pst:C:c}. 
Assume $E_\lambda e_0 = - E_\lambda e_l$.
Then, by~\eqref{1213-1:1}, we have
$
	\lambda = 0 = \cos \frac{\pi}{2} = \cos \frac{\frac{l}{2}}{l}\pi.
$
Since $l/2$ is odd, we see \ref{pst:C:c} is satisfied. 
Therefore, by Theorem~\ref{pst}, we see that $X$ admits perfect state transfer from $d^*e_0$ to $d^*e_{l}$ at time $l$.

Finally, we show minimality of the time at which perfect state transfer occurs.
By Theorem~\ref{pst}~\ref{pst:C}, if perfect state transfer occurs at time $\tau$, then $T_{\tau}(\lambda) = \pm 1$ for any $\lambda \in \Theta_{P}(e_0)$.
%This is claimed in Theorem~3.3 of \cite{kubota2022perfect}.
By~(\ref{1213-1}),
we have $0 \in \spec(P) = \Theta_{P}(e_0)$,
so $\pm 1 = T_{\tau}(0) = T_{\tau}(\cos\frac{\pi}{2}) = \cos\frac{\tau}{2}\pi$,
and hence $\frac{\tau}{2} \in \MB{Z}$,
i.e.,
\begin{equation} \label{1214-1}
\tau \in 2\MB{Z}.  
\end{equation}
In addition,
since $l \equiv 2 \pmod 4$,
$\frac{l}{2} -1$ is even integer.
Thus, (\ref{1213-1}) implies that
$ \cos \frac{a(\frac{l}{2} - 1)\pi }{l} \in \spec(P),$
and hence
\[ \pm 1 = T_{\tau}\left( \cos \frac{a(\frac{l}{2} - 1)}{l}\pi \right) = \cos \frac{a \tau (\frac{l}{2} - 1)}{l}\pi. \]
We have
$\frac{a \tau (\frac{l}{2} - 1)}{l} \in \MB{Z},$
so
$ a\tau\left( \frac{l}{2} - 1 \right) \in l\MB{Z} \subset \frac{l}{2}\MB{Z}.$
Since $a$ and $\frac{l}{2}$ are coprime by the connectivity of $X$, and $\frac{l}{2} - 1$ and $\frac{l}{2}$ are coprime,
we have
$
\tau \in \frac{l}{2}\MB{Z}.    
$
Hence, by (\ref{1214-1}),
we obtain $\tau \in l\MB{Z}$, i.e., $\tau \geq l$.
The minimum time at which perfect state transfer occurs is $l$.

The case~\ref{1214-4:1} has already been shown in Theorem~6.1 of \cite{zhan2019infinite}.
We can show that the time $l$ is minimum in the same manner in~\ref{1214-4:2}.
\end{proof}

In contrast to the above lemma,
perfect state transfer does not occur if $l \equiv 0 \pmod 4$.

\begin{lem}
Let $X=X(\MB{Z}_{2l}, \{\pm a, \pm b\})$ be a connected $4$-regular graph.
Assume that $a + b = l$.
If $l \equiv 0 \pmod 4$,
then $X$ does not admit perfect state transfer between vertex type states.
\end{lem}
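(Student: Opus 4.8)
The plan is to assume perfect state transfer occurs and to contradict the necessary condition~\ref{pst:C:a} of Theorem~\ref{pst} at the eigenvalue $0$. First I would reduce to a single pair of vertices. If perfect state transfer occurs from $d^*e_x$ to $d^*e_y$, then Lemma~\ref{1214-3} forces $y = x + l$, and applying Theorem~\ref{1124-1} to the rotation $\rho_{-x} \in \Aut(X)$ from Lemma~\ref{1212-1} shows that perfect state transfer also occurs from $d^*e_0$ to $d^*e_l$. Since $X$ is regular, Theorem~\ref{pst} then guarantees condition~\ref{pst:C} with $\gamma = 1$; in particular~\ref{pst:C:a} asserts $E_\lambda e_0 = \pm E_\lambda e_l$ for every $\lambda \in \spec(P)$, where $P = P(X)$ and $E_\lambda$ is the eigenprojection for $\lambda$. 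My goal is to show that this fails for $\lambda = 0$.

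Next I would analyze the index set $\MC{I}_0 = \{ j \in [2l] \mid \lambda_j = 0 \}$. The eigenvalue computation of Lemma~\ref{1214-4} only uses $a+b=l$, so the formula~(\ref{1213-1}) remains valid here: $\lambda_j = 0$ for odd $j$ and $\lambda_j = \cos\frac{aj}{l}\pi$ for even $j$. Thus $\MC{I}_0$ always contains every odd index, and the crux of the proof is to exhibit an \emph{even} index in $\MC{I}_0$ when $l \equiv 0 \pmod 4$. For even $j$ one has $\lambda_j = 0$ exactly when $aj \equiv \frac{l}{2} \pmod l$. Connectivity together with $a+b=l$ gives $\gcd(a,l)=1$ (since $\gcd(a,b,2l)=\gcd(a,l-a)=\gcd(a,l)$), so this congruence has a solution $j_0$, unique modulo $l$ and satisfying $j_0 \neq 0$. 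The parity of $j_0$ is the key point: reducing $aj_0 \equiv \frac{l}{2} \pmod l$ modulo $2$ (legitimate because $l$ is even) yields $a j_0 \equiv \frac{l}{2} \pmod 2$; as $l \equiv 0 \pmod 4$ the number $\frac{l}{2}$ is even, while $a$ is odd (being coprime to the even integer $l$), so $j_0$ is even. Hence $j_0$ is an even index lying in $\MC{I}_0$. This step is precisely where the hypothesis $l \equiv 0 \pmod 4$ enters, and it is the main obstacle: for $l \equiv 2 \pmod 4$ the quantity $\frac{l}{2}$ is odd and the identical reasoning forces $j_0$ to be odd, which is exactly why the preceding lemma produces transfer instead.

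Finally I would combine the two parities to break condition~\ref{pst:C:a}. By~(\ref{1225-1}) and the orthogonality in Lemma~\ref{lem:ortho}, $E_0 = \frac{1}{2l} \sum_{j \in \MC{I}_0} u_j u_j^*$, so using the inner products~(\ref{uj0}) and~(\ref{ujl}) we obtain $E_0 e_0 = \frac{1}{2l} \sum_{j \in \MC{I}_0} u_j$ and $E_0 e_l = \frac{1}{2l} \sum_{j \in \MC{I}_0} (-1)^j u_j$. Since the vectors $u_j$ are linearly independent, an odd index in $\MC{I}_0$ (which always exists) shows $E_0 e_0 \neq E_0 e_l$, whereas the even index $j_0 \in \MC{I}_0$ shows $E_0 e_0 \neq -E_0 e_l$. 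Therefore $E_0 e_0 \neq \pm E_0 e_l$, contradicting condition~\ref{pst:C:a} of Theorem~\ref{pst}. Consequently $X$ admits no perfect state transfer between vertex type states when $l \equiv 0 \pmod 4$.
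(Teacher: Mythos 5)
Your proposal is correct and follows essentially the same route as the paper: reduce to the pair $(0,l)$ via Lemma~\ref{1214-3} and Theorem~\ref{1124-1}, then contradict condition~\ref{pst:C:a} of Theorem~\ref{pst} at $\lambda = 0$ by showing $\MC{I}_0$ contains indices of both parities. The only (immaterial) difference is how the even index is produced: the paper observes that $a$ and $b$ are both odd and plugs in $j = \frac{l}{2}$ directly, giving $\lambda_{l/2} = \cos\frac{a}{2}\pi = 0$, whereas you solve the congruence $aj \equiv \frac{l}{2} \pmod{l}$ abstractly using $\gcd(a,l)=1$ and read off the parity mod $2$ --- your $j_0$ is in fact $\frac{l}{2}$ modulo $l$.
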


\begin{proof}
As confirmed in Lemma~\ref{1214-3},
if perfect state transfer occurs from $d^*e_x$ to $d^*e_y$, then $y = x+l$.
By Lemma~\ref{1212-1}~(i) and Theorem~\ref{1124-1},
we may assume that $x=0$ without loss of generality.
We check that the first condition of Theorem~\ref{pst}~(C) does not hold.
We use the same notation as in the proof of Lemma~\ref{1214-4}.
\TR{Since $X$ is connected, $a$ and $b$ are odd.
Hence, $\frac{l}{2} \in \MC{I}_0$.}\MNOTE{Change A.15}
\TB{
Since $X$ is connected, either $a$ or $b$ is odd.
Furthermore, by the assumptions $a+b = l$ and $l \equiv 0 \pmod 4$, both $a$ and $b$ are odd.
Noting that $\frac{l}{2}$ is even, we have by~\eqref{1213-1},
\begin{align*}  
    \lambda_1 = 0 
    \quad \text{ and } \quad
    \lambda_{\frac{l}{2}} 
    = \cos \left( \frac{a \cdot \frac{l}{2}}{l} \pi\right)
    = \cos \frac{a}{2}\pi = 0,
\end{align*}
and obtain \MNOTE[green]{Change B.20}
\begin{align}  \label{1022}
    1, \frac{l}{2} \in \MC{I}_0 = \{ j \in [2l] \mid \lambda_j = 0 \}.
\end{align}
}
By Equalities~(\ref{uj0}) and~(\ref{ujl}),
we have $\TB{2l \cdot}\MNOTE[black]{Change C.4}E_0e_0 = \sum_{j \in \MC{I}_0} u_j$ and $\TB{2l \cdot} E_0e_l = \sum_{j \in \MC{I}_0} (-1)^j u_j$.
Supposing that the first condition of Theorem~\ref{pst}~(C) holds,
from linearly independence of eigenvectors we obtain either $1 = (-1)^j$ for any $j \in \MC{I}_0$,
or $1 = -(-1)^j$ for any $j \in \MC{I}_0$.
However, by~\eqref{1022} and $\frac{l}{2} \in 2\MB{Z}$,
none of the above is valid.
We see that perfect state transfer does not occur.
\end{proof}

\section{Circulant graphs with valency $4$ and $a+b \neq l$}
%do not admit perfect state transfer
%The case $a+b \neq l$
Following the previous section,
we study $4$-regular circulant graphs $X=X(\MB{Z}_{2l}, \{\pm a, \pm b\})$ with $a+b \neq l$.
To conclude first, such circulant graphs do not admit perfect state transfer.
However, its proof requires a lot of effort.
To begin, we check obvious conditions required by the circulant graph $X$ with valency $4$.
For the graph $X$ to have valency $4$,
the following must be satisfied:
\begin{align}
a,b &\not\equiv 0 \pmod{l}, \label{0128-2} \\
a \pm b &\not\equiv 0 \pmod{2l}. \label{0128-3}
\end{align}
Furthermore, we may assume that $a,b \equiv 1,2,\dots, l-1 \pmod{2l}$ without loss of generality, and since $a-b \equiv \pm (l-2), \dots, \pm 1 \pmod{2l}$, we must have
\begin{equation}
a - b \not\equiv l \pmod{2l}. \label{0128-4}
\end{equation}

\subsection{Necessary conditions for a graph to admit perfect state transfer}

A complex number $\alpha$ is said to be an \emph{algebraic integer}
if there exists a monic polynomial $p(x) \in \MB{Z}[x]$ such that $p(\alpha) = 0$.
Let $\Omega$ denote the set of algebraic integers.
The set of algebraic integers forms a ring,
in particular, $\Omega$ is closed under addition and multiplication.
A specific example is that $\zeta_m^k$ and $\zeta_m^{-k}$ are algebraic integers because $(\zeta_m^k)^m = (\zeta_m^{-k})^m = 1$.
Hence, for a rational number $2k/m$, $2\cos \frac{2k}{m}\pi = \zeta_m^k + \zeta_m^{-k}$ is also an algebraic integer. 

Let $K$ be a finite extension field of the field $\MB{Q}$ of rational numbers.
The set $\{ \omega_1, \dots, \omega_t \}$ is an \emph{integral basis} for $K$ when every element of $K \cap \Omega$ is uniquely expressible as a $\MB{Z}$-linear combination of elements of the set.
For example, an integral basis for $\MB{Q}$ is $\{1\}$,
i.e., $\MB{Q} \cap \Omega = \MB{Z}$,
and an integral basis for $\MB{Q}(\sqrt{2})$ is $\{1, \sqrt{2}\}$, i.e., $\MB{Q}(\sqrt{2}) \cap \Omega = \{ p+q\sqrt{2} \mid p,q \in \MB{Z} \}$.
Readers who want to know more details on integral bases should refer to \cite{jarvis2014algebraic}.

\begin{lem} \label{0128-5}
Let $\G$ be a graph with the discriminant $P$,
and let $x,y$ be vertices of $\G$.
If perfect state transfer occurs from $d^*e_x$ to $d^*e_y$ at time $\tau$,
then $2\lambda$ is an algebraic integer for any $\lambda \in \Theta_{P}(e_x)$.
\end{lem}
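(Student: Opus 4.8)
The plan is to reduce the statement to the characterization of perfect state transfer already established in Theorem~\ref{pst} and then to exhibit $2\lambda$ explicitly as a sum of roots of unity. First I would invoke the equivalence of~\ref{pst:A} and~\ref{pst:C} in Theorem~\ref{pst}: since perfect state transfer occurs from $d^*e_x$ to $d^*e_y$ at time $\tau$, there is a sign $\gamma \in \{\pm 1\}$ for which conditions~\ref{pst:C:a}--\ref{pst:C:c} hold. Now fix $\lambda \in \Theta_{P}(e_x)$. By~\ref{pst:C:a} we have $E_\lambda e_x = \pm E_\lambda e_y$, so exactly one of~\ref{pst:C:b} or~\ref{pst:C:c} applies to $\lambda$; in either case the conclusion is that $\lambda = \cos \frac{j}{\tau}\pi$ for some integer $j$ (even in the first case, odd in the second). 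Equivalently, one can recall from the proof of Theorem~\ref{pst} that $T_\tau(\lambda) \in \{\pm 1\}$ for every $\lambda \in \Theta_{P}(e_x)$ and then apply Lemma~\ref{1203-5} to reach the same form for $\lambda$.

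Next I would rewrite $2\lambda$ as a sum of two roots of unity. Writing $\theta = \frac{j}{\tau}\pi$, we have
\[
2\lambda = 2\cos \frac{j}{\tau}\pi = e^{i\theta} + e^{-i\theta} = \zeta_{2\tau}^{\,j} + \zeta_{2\tau}^{-j}.
\]
Since $(\zeta_{2\tau}^{\,j})^{2\tau} = (\zeta_{2\tau}^{-j})^{2\tau} = 1$, both $\zeta_{2\tau}^{\,j}$ and $\zeta_{2\tau}^{-j}$ are roots of the monic integer polynomial $x^{2\tau}-1$, hence algebraic integers. Because $\Omega$ is closed under addition, their sum $2\lambda$ is an algebraic integer, which is exactly the claim. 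This step is just the observation recorded immediately before the lemma, that $2\cos \frac{2k}{m}\pi = \zeta_m^{k} + \zeta_m^{-k}$ is an algebraic integer, specialized to denominator $m = 2\tau$ and numerator $k = j$.

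I do not expect a genuine obstacle here; the argument is short and essentially packages Theorem~\ref{pst} with the ring structure of $\Omega$. The only point that needs care is the bookkeeping that \emph{every} $\lambda$ in the eigenvalue support $\Theta_{P}(e_x)$ --- irrespective of which of the two sign cases~\ref{pst:C:b} or~\ref{pst:C:c} it falls into --- is forced into the form $\cos \frac{j}{\tau}\pi$, so that the final computation applies uniformly. That uniformity is precisely what the characterization delivers, and the spectral radius bound of Lemma~\ref{lem:r(P)} guarantees $\lambda \in [-1,1]$, so the cosine representation supplied by Lemma~\ref{1203-5} is legitimate.
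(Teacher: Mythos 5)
Your proposal is correct and follows essentially the same route as the paper: the paper's proof likewise invokes Theorem~\ref{pst} to conclude $\lambda = \cos\frac{j}{\tau}\pi$ for some integer $j$ and then cites the observation, recorded just before the lemma, that $2\cos\frac{j}{\tau}\pi = \zeta_{2\tau}^{j} + \zeta_{2\tau}^{-j}$ is an algebraic integer. You merely make explicit the case analysis inside condition (C) and the closure of $\Omega$ under addition, which the paper leaves implicit.
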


\begin{proof}
Assume perfect state transfer occurs from $d^*e_x$ to $d^*e_y$ at time $\tau$. 
Let $\lambda \in \Theta_P(e_x)$.
By Theorem~\ref{pst},
we have $\lambda = \cos \frac{j}{\tau}\pi$ for some integer $j$.
Hence $2\lambda$ is an algebraic integer.
\end{proof}

\begin{lem}	\label{nonalg}
	Let $l \in \MB{Z}_{\geq 1}$.
	If a $4$-regular circulant graph $X = X(\MB{Z}_{2l}, \{ \pm a, \pm b\})$ admits perfect state transfer, then
	\begin{align}	\label{nonalg:1}
		\frac{\zeta_{2l}^a + \zeta_{2l}^{-a} + \zeta_{2l}^b + \zeta_{2l}^{-b} }{2}
	\end{align}
	is an algebraic integer.
\end{lem}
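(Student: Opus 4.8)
The plan is to identify the quantity in~(\ref{nonalg:1}) as twice an eigenvalue of $P = P(X)$ lying in the eigenvalue support of a vertex, and then invoke Lemma~\ref{0128-5}.

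First I would observe that, by Lemma~\ref{lem:ortho} together with the fact that $P = \frac{1}{4}A$ for a $4$-regular graph, the eigenvalue of $P$ associated with the eigenvector $u_1$ is exactly
\[
\lambda_1 = \frac{\zeta_{2l}^{a} + \zeta_{2l}^{-a} + \zeta_{2l}^{b} + \zeta_{2l}^{-b}}{4},
\]
so the expression~(\ref{nonalg:1}) equals $2\lambda_1$. Thus it suffices to show $\lambda_1 \in \Theta_{P}(e_x)$ for a suitable vertex $x$, after which Lemma~\ref{0128-5} immediately gives that $2\lambda_1$ is an algebraic integer.

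Next, suppose $X$ admits perfect state transfer, say from $d^*e_x$ to $d^*e_y$. Using the rotation automorphism $\rho_{-x}$ of Lemma~\ref{1212-1}~(i) together with Theorem~\ref{1124-1}, I would reduce to the case $x = 0$, so that perfect state transfer occurs from $d^*e_0$. It then remains to determine $\Theta_{P}(e_0)$. Since $X$ is $4$-regular, $P$ and its adjacency matrix $A$ share the same eigenprojections (they differ only by the scalar $\frac{1}{4}$), so the eigenvalue supports with respect to $P$ and to $A$ coincide. Hence Lemma~\ref{1206-3} yields $\Theta_{P}(e_0) = \spec(P)$. In particular $\lambda_1 \in \spec(P) = \Theta_{P}(e_0)$, and applying Lemma~\ref{0128-5} with $x = 0$ finishes the proof.

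The only delicate point is the bookkeeping that $\Theta_{P}(e_0) = \spec(P)$: one must be sure that passing from $A$ to $P = \frac{1}{4}A$ does not shrink the support. This is immediate, because scaling a normal matrix by a nonzero constant leaves its eigenspaces, and hence its eigenprojections $E_\lambda$, unchanged, so $E_\lambda e_0 \neq 0$ holds for $P$ exactly when it holds for $A$. Everything else is a direct chaining of the already-established lemmas, so I expect no genuine obstacle beyond this identification.
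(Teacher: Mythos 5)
Your proof is correct and takes essentially the same route as the paper's: identify $\frac{1}{4}\left(\zeta_{2l}^{a} + \zeta_{2l}^{-a} + \zeta_{2l}^{b} + \zeta_{2l}^{-b}\right)$ as an eigenvalue of $P$ via Lemma~\ref{lem:ortho}, place it in $\Theta_{P}(e_0) = \spec(P)$ via Lemma~\ref{1206-3}, and conclude with Lemma~\ref{0128-5}. The only difference is that you make explicit two steps the paper leaves implicit, namely the reduction to $x = 0$ by Theorem~\ref{1124-1} and the fact that the scaling $P = \frac{1}{4}A$ preserves eigenprojections, so nothing is lost in passing between $\Theta_A$ and $\Theta_P$.
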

\begin{proof}
From Lemmas~\ref{lem:ortho} and~\ref{1206-3},
we have $(\zeta_{2l}^a + \zeta_{2l}^{-a} + \zeta_{2l}^b + \zeta_{2l}^{-b} )/4 \in \spec(P(X)) = \Theta_{P(X)}(e_0)$.
%From Lemma~\ref{1206-3}, we have $\Theta_{P(X)}(e_0) = \spec(P(X))$.
%From Lemma~\ref{lem:ortho}, $(\zeta_{2l}^a + \zeta_{2l}^{-a} + \zeta_{2l}^b + \zeta_{2l}^{-b} )/4 \in \spec(P(X)) = \Theta_{P(X)}(e_0)$.
Therefore, Lemma~\ref{0128-5} implies that \TB{if}\MNOTE{Change A.16} $X$ admits perfect state transfer, then~\eqref{nonalg:1} is an algebraic integer.
\end{proof}

\begin{lem} \label{0114-1}
Let $l$ be an odd integer,
and let $c$ be an integer.
Then we have $\zeta_{2l}^c = (-1)^c \zeta_l^{\varepsilon(c)}$,
where 
\[ \varepsilon(c) = \begin{cases}
	\frac{c}{2}  &\text{if $c$ is even,} \\
	\frac{c+l}{2}  &\text{if $c$ is odd.}
\end{cases} \]
In particular,
if a $4$-regular circulant graph $X = X(\MB{Z}_{2l}, \{ \pm a, \pm b\})$ admits perfect state transfer, then
\begin{align}
\frac{\zeta_{2l}^a + \zeta_{2l}^{-a} + \zeta_{2l}^b + \zeta_{2l}^{-b} }{2} \notag = 
\frac{(-1)^a \zeta_{l}^{\varepsilon(a)} + (-1)^a \zeta_{l}^{-\varepsilon(a)} + (-1)^b \zeta_{l}^{\varepsilon(b)} + (-1)^b \zeta_{l}^{-\varepsilon(b)} }{2} \label{0128-1}
\end{align}
is an algebraic integer.
\end{lem}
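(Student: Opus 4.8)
The plan is to prove the identity $\zeta_{2l}^c = (-1)^c \zeta_l^{\varepsilon(c)}$ by a case analysis on the parity of $c$, resting on two elementary relations between the roots of unity: $\zeta_{2l}^2 = \zeta_l$ (since squaring doubles the base angle $\pi/l$ to $2\pi/l$) and $\zeta_{2l}^l = e^{\pi i} = -1$. First I would record these two relations. In the even case, writing $c = 2m$ gives $\zeta_{2l}^c = (\zeta_{2l}^2)^m = \zeta_l^m = \zeta_l^{c/2}$, and since $(-1)^c = 1$ this is exactly $(-1)^c\zeta_l^{\varepsilon(c)}$. In the odd case, the oddness of $l$ guarantees that $c+l$ is even, so $\varepsilon(c) = (c+l)/2 \in \MB{Z}$; here I would compute $\zeta_l^{(c+l)/2} = \zeta_{2l}^{c+l} = \zeta_{2l}^c\,\zeta_{2l}^l = -\zeta_{2l}^c$, and multiplying through by $(-1)^c = -1$ recovers $\zeta_{2l}^c = (-1)^c\zeta_l^{\varepsilon(c)}$. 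This completes the first assertion.

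For the ``in particular'' clause I would invoke Lemma~\ref{nonalg}, which already gives that $(\zeta_{2l}^a + \zeta_{2l}^{-a} + \zeta_{2l}^b + \zeta_{2l}^{-b})/2$ is an algebraic integer whenever $X$ admits perfect state transfer, and then substitute the identity just established for each of $c \in \{a, -a, b, -b\}$. The only point needing a moment of care is the negative exponents: I would check that $\zeta_{2l}^{-a} = (-1)^a \zeta_l^{-\varepsilon(a)}$. When $a$ is even this is immediate from $\varepsilon(-a) = -\varepsilon(a)$; when $a$ is odd one has $\varepsilon(-a) = (l-a)/2 = -\varepsilon(a) + l$, and then $\zeta_l^{\varepsilon(-a)} = \zeta_l^{-\varepsilon(a)}\zeta_l^{l} = \zeta_l^{-\varepsilon(a)}$ using $\zeta_l^l = 1$. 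With these substitutions the numerator rewrites term by term into the right-hand side of~\eqref{0128-1}, so that expression equals the algebraic integer supplied by Lemma~\ref{nonalg}.

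I do not anticipate any real obstacle, as the whole argument is root-of-unity bookkeeping driven entirely by $\zeta_{2l}^2 = \zeta_l$ and $\zeta_{2l}^l = -1$. The single subtle step is the exponent reduction for the negative powers in the odd case, where one must invoke $\zeta_l^l = 1$ to identify $\zeta_l^{\varepsilon(-a)}$ with $\zeta_l^{-\varepsilon(a)}$; everything else is a direct substitution.
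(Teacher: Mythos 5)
Your proposal is correct and follows essentially the same route as the paper: the identity $\zeta_{2l}^c = (-1)^c\zeta_l^{\varepsilon(c)}$ is proved by the same even/odd case split using $\zeta_{2l}^2 = \zeta_l$ and $\zeta_{2l}^l = -1$, and the ``in particular'' clause is obtained by substituting this identity into the algebraic-integer conclusion already supplied by Lemma~\ref{nonalg} (the paper cites the underlying Lemma~\ref{0128-5} directly, which is the same thing). Your explicit verification that $\zeta_{2l}^{-a} = (-1)^a\zeta_l^{-\varepsilon(a)}$ via $\varepsilon(-a) = -\varepsilon(a)+l$ and $\zeta_l^l = 1$ is a small bookkeeping step the paper leaves implicit, and it is carried out correctly.
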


\begin{proof}
If $c$ is even,
then we have $\zeta_{2l}^c = \zeta_{l}^{\frac{c}{2}}$.
If $c$ is odd,
then we have $\zeta_{2l}^c = - \zeta_{2l}^l \zeta_{2l}^c = -\zeta_{2l}^{c+l} = -\zeta_{l}^{\frac{c+l}{2}}$.
Hence these together with Lemma~\ref{0128-5} implies the desired result.
\end{proof}

By this lemma, what we need to discuss is whether
\begin{equation} \label{0120-2}
\Delta = \frac{\pm \zeta_{m}^{p} \pm \zeta_{m}^{-p} \pm \zeta_{m}^{q} \pm \zeta_{m}^{-q}}{2}
\end{equation}
is an algebraic integer for some positive integers $m$, $p$ and $q$.
%such that $m \not\equiv 2 \pmod 4$.
To do so, we once forget the condition $a+b \neq l$ for the exponents with respect to $\zeta_{2l}$
and study a sufficient condition for $\Delta$ not to be an algebraic integer.
Readers who want to first know its sufficient condition should refer to Lemma~\ref{0120-1}.
Due to circumstances of this sufficient condition,
we first need to study $\MB{Q}$-linearly independence of $\zeta_m^{p}, \zeta_m^{-p}, \zeta_m^{q}, \zeta_m^{-q}$.

\subsection{$\MB{Q}$-linearly independence}
The following is easily verified.

\begin{lem} \label{0115-3}
Let $s,t$ be integers such that $s \pm t \not\equiv 0 \pmod 5$.
Then $\cos \frac{s}{5}\pi$ and $\cos \frac{t}{5}\pi$ are $\MB{Q}$-linearly independent.
\end{lem}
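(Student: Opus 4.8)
The plan is to exploit the fact that every value $\cos\frac{k}{5}\pi$ with $k \in \MB{Z}$ lies in the real quadratic field $\MB{Q}(\sqrt5)$, which is a $2$-dimensional vector space over $\MB{Q}$ with basis $\{1,\sqrt5\}$. Concretely, I would write $4\cos\frac{k}{5}\pi = a_k + b_k\sqrt5$ with $a_k,b_k \in \MB{Z}$, and reduce the desired $\MB{Q}$-linear independence of $\cos\frac{s}{5}\pi$ and $\cos\frac{t}{5}\pi$ to the statement that the integer vectors $(a_s,b_s)$ and $(a_t,b_t)$ are not parallel, i.e.\ $a_s b_t - a_t b_s \neq 0$. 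First I would record that none of the values $\cos\frac{k}{5}\pi$ vanishes, since $\cos\frac{k}{5}\pi = 0$ would force $2k \equiv 5 \pmod{10}$, which is impossible for parity reasons. Hence any nontrivial relation $\alpha\cos\frac{s}{5}\pi + \beta\cos\frac{t}{5}\pi = 0$ with $\alpha,\beta \in \MB{Q}$ forces both coefficients to be nonzero and the two coordinate vectors to be proportional.

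Next I would compute the coordinates explicitly from the five standard values: $4\cos\frac{k}{5}\pi$ equals $4$, $1+\sqrt5$, $-1+\sqrt5$, $1-\sqrt5$, $-1-\sqrt5$ for $k=0,1,2,3,4$ respectively, giving $(a_k,b_k) = (4,0),(1,1),(-1,1),(1,-1),(-1,-1)$. The key observation is that, by the relation $\cos\frac{k+5}{5}\pi = -\cos\frac{k}{5}\pi$ together with the evenness of cosine, the $\MB{Q}$-line spanned by $(a_k,b_k)$ depends only on $k \bmod 5$ up to sign. This yields exactly three direction classes: $\pm k \equiv 0 \pmod 5$ gives the line $\MB{Q}\cdot(1,0)$, $\pm k \equiv 1 \pmod 5$ gives $\MB{Q}\cdot(1,1)$, and $\pm k \equiv 2 \pmod 5$ gives $\MB{Q}\cdot(1,-1)$. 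A quick determinant check shows these three lines are pairwise distinct.

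Consequently, $(a_s,b_s)$ and $(a_t,b_t)$ are parallel precisely when $s$ and $t$ fall in the same class, i.e.\ when $s \equiv \pm t \pmod 5$, equivalently when $s-t \equiv 0$ or $s+t \equiv 0 \pmod 5$. Under the hypothesis $s \pm t \not\equiv 0 \pmod 5$ the two vectors lie in distinct lines and are therefore $\MB{Q}$-linearly independent, which is the desired conclusion. The only mild obstacle is bookkeeping, namely passing cleanly from the $\bmod\,10$ dependence of the actual cosine values to the $\bmod\,5$ dependence of the spanned line; this I would handle uniformly through the sign symmetry $\cos\frac{k+5}{5}\pi = -\cos\frac{k}{5}\pi$ and $\cos\frac{-k}{5}\pi = \cos\frac{k}{5}\pi$ noted above, which is exactly why the problem reduces to three directions rather than ten coordinate pairs.
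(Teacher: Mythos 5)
Your proof is correct: writing $4\cos\frac{k}{5}\pi$ in coordinates with respect to the basis $\{1,\sqrt{5}\}$ of $\MB{Q}(\sqrt{5})$, using the symmetries $\cos\frac{k+5}{5}\pi=-\cos\frac{k}{5}\pi$ and $\cos\frac{-k}{5}\pi=\cos\frac{k}{5}\pi$ to reduce to the three direction classes $(1,0)$, $(1,1)$, $(1,-1)$ indexed by $\pm k \bmod 5$, and checking nonvanishing and pairwise non-proportionality, is a complete argument. The paper gives no proof at all (the lemma is prefaced only by ``The following is easily verified''), and your verification is precisely the computation the authors intend --- it matches the technique they use nearby, namely inspecting the coefficient of $\sqrt{5}$ in the values $\frac{\pm 1 \pm \sqrt{5}}{4}$.
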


\begin{lem} \label{0115-1}
Let $m$ be a positive integer,
and let $p$ be an integer.
Then $4p \equiv 0 \pmod m$ if and only if $N(\frac{2p}{m}) \in \{1,2\}$.
\end{lem}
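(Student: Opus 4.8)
The plan is to unwind the definition of $N$ and reduce the whole statement to a single divisibility, after which both directions of the equivalence fall out at once. First I would write the rational number $\frac{2p}{m}$ in lowest terms as $\frac{2p}{m} = \frac{p'}{q}$, where $q = N(\frac{2p}{m}) \in \MB{Z}_{\geq 1}$ and $\gcd(p', q) = 1$ by the definition of $N$. (This works even in the degenerate case $2p = 0$, where we take $p' = 0$ and $q = 1$.)

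Next I would rephrase the left-hand condition as a statement about $q$. Observe that $4p \equiv 0 \pmod{m}$ holds if and only if $\frac{4p}{m} \in \MB{Z}$. Since $\frac{4p}{m} = 2 \cdot \frac{2p}{m} = \frac{2p'}{q}$, this membership is equivalent to the divisibility $q \mid 2p'$. Thus the entire lemma will reduce to showing that $q \mid 2p'$ is equivalent to $q \in \{1,2\}$.

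Finally I would invoke the coprimality $\gcd(p', q) = 1$: by Euclid's lemma, $q \mid 2p'$ forces $q \mid 2$, and conversely any $q$ dividing $2$ trivially divides $2p'$. Since $q$ is a positive integer, $q \mid 2$ is exactly the condition $q \in \{1,2\}$. Chaining the equivalences then yields $4p \equiv 0 \pmod{m} \iff q = N(\tfrac{2p}{m}) \in \{1,2\}$, as desired.

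I do not expect any genuine obstacle here; the argument is elementary. The only point needing a little care is the bookkeeping around the definition of $N$—making sure that the reduction of $\frac{2p}{m}$ to lowest terms is handled so that $\gcd(p',q)=1$ is available, since that coprimality is precisely what lets Euclid's lemma collapse $q \mid 2p'$ to $q \mid 2$. An alternative, essentially equivalent, route would be to write $N(\tfrac{2p}{m}) = m/\gcd(2p,m)$ directly and check $m/\gcd(2p,m) \in \{1,2\} \iff \gcd(2p,m) \geq m/2 \iff m \mid 4p$, but the lowest-terms-plus-Euclid phrasing is cleaner and avoids separate casework on the parity of $m$.
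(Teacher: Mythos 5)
Your proof is correct and takes essentially the same elementary route as the paper: both unwind the definition of $N$ to the observation that $N(\frac{2p}{m}) \in \{1,2\}$ says exactly that $\frac{2p}{m}$ is a half-integer, which is equivalent to $m \mid 4p$. The only cosmetic difference is that you organize it as a single chain of equivalences via the lowest-terms denominator $q$ and Euclid's lemma, whereas the paper argues the two implications separately by writing $\frac{2p}{m} = \frac{k}{2}$ and $\frac{2p}{m} = \frac{s}{2}$ directly.
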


\begin{proof}
First, we suppose that $4p \equiv 0 \pmod m$.
There exists an integer $k$ such that $4p = km$,
i.e., $\frac{2p}{m} = \frac{k}{2}$.
Thus, we have $N(\frac{2p}{m}) = N(\frac{k}{2}) \leq 2$.
Conversely, we suppose that $N(\frac{2p}{m}) \in \{1,2\}$.
If $N(\frac{2p}{m}) \in \{1, 2\}$,
then there exists an integer $s$ such that $\frac{2p}{m} = \frac{s}{2}$,
and hence $4p = ms \equiv 0 \pmod m$.
\end{proof}

\begin{lem} \label{0115-4}
Let $m$ be a positive integer,
and let $p, q$ be distinct integers.
If $4p, 4q, 2(p \pm q) \not\equiv 0 \pmod m$,
then $\cos \frac{2p}{m}\pi$ and $\cos \frac{2q}{m}\pi$ are $\MB{Q}$-linearly independent.
\end{lem}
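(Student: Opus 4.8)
The plan is to set $r_1 = \frac{2p}{m}$ and $r_2 = \frac{2q}{m}$, so that the goal becomes the $\MB{Q}$-linear independence of $\cos(r_1\pi)$ and $\cos(r_2\pi)$, and then to feed these into the three-term independence criterion of Lemma~\ref{1208-1}. Two preliminary observations prepare the ground. First, since $4p \not\equiv 0$ and $4q \not\equiv 0 \pmod m$, Lemma~\ref{0115-1} gives $N(r_1), N(r_2) \geq 3$. Second, $r_1 - r_2$ is an integer exactly when $m \mid 2(p-q)$, and $r_1 + r_2$ is an integer exactly when $m \mid 2(p+q)$; the hypotheses $2(p \pm q) \not\equiv 0 \pmod m$ therefore guarantee that neither $r_1 - r_2$ nor $r_1 + r_2$ is an integer, which is precisely the standing assumption required to invoke Lemma~\ref{1208-1}.

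With these in hand I would split on the pair $(N(r_1), N(r_2))$. In the generic case $N(r_1), N(r_2) \geq 4$ with $(N(r_1), N(r_2)) \neq (5,5)$, Lemma~\ref{1208-1} applies directly and yields that $1, \cos(r_1\pi), \cos(r_2\pi)$ are $\MB{Q}$-linearly independent; since any subset of a linearly independent set is again linearly independent, $\cos(r_1\pi)$ and $\cos(r_2\pi)$ are independent, as desired.

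It remains to handle the two configurations excluded by Lemma~\ref{1208-1}. If $(N(r_1), N(r_2)) = (5,5)$, write $r_1 = s/5$ and $r_2 = t/5$ in lowest terms; then the non-integrality of $r_1 \pm r_2 = \frac{s \pm t}{5}$ translates into $s \pm t \not\equiv 0 \pmod 5$, so Lemma~\ref{0115-3} delivers the independence. If instead $N(r_1) = 3$ (the case $N(r_2) = 3$ being symmetric), I would first observe that $N(r_1) = N(r_2) = 3$ cannot occur: writing $r_1 = s/3$ and $r_2 = t/3$ in lowest terms with $s, t \not\equiv 0 \pmod 3$, one of $s \equiv t$ or $s \equiv -t \pmod 3$ necessarily holds, forcing one of $r_1 - r_2$, $r_1 + r_2$ to be an integer and contradicting the second preliminary observation. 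Hence $N(r_2) \geq 4$. Now $\cos(r_1\pi)$ is a nonzero rational (indeed $\pm\tfrac{1}{2}$) by Lemma~\ref{0115-2}, whereas $\cos(r_2\pi)$ is irrational by the same lemma; a nonzero rational and an irrational number are trivially $\MB{Q}$-linearly independent, completing this case.

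The genuinely delicate point is the bookkeeping around $N = 3$: our congruence hypotheses only guarantee denominators $\geq 3$ through Lemma~\ref{0115-1}, which is one short of the threshold $\geq 4$ that Lemma~\ref{1208-1} demands, and the exceptional pair $(5,5)$ must be peeled off separately. The whole argument hinges on exploiting the non-integrality of $r_1 \pm r_2$ twice: once to rule out the simultaneous occurrence $N(r_1) = N(r_2) = 3$, and once to recast the $(5,5)$ case into the exact hypothesis of the dedicated Lemma~\ref{0115-3}.
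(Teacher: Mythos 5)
Your proposal is correct and follows essentially the same route as the paper's proof: both use Lemma~\ref{0115-1} to get denominators at least $3$, rule out the simultaneous case $N=N=3$ via the $2(p\pm q)$ hypotheses, dispatch a single denominator $3$ with the rationality dichotomy of Lemma~\ref{0115-2}, invoke Lemma~\ref{1208-1} in the generic case, and peel off the exceptional pair $(5,5)$ with Lemma~\ref{0115-3}. Your explicit verification that $r_1 \pm r_2$ is non-integral (needed to apply Lemma~\ref{1208-1}) is a welcome touch that the paper leaves implicit, but it does not change the argument.
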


\begin{proof}
By Lemma~\ref{0115-1},
we have $N(\frac{2p}{m}), N(\frac{2q}{m}) \geq 3$.
First, we suppose that $N(\frac{2p}{m}) = N(\frac{2q}{m}) = 3$.
Then, there exist $s,t \in \MB{Z} \setminus 3\MB{Z}$ such that $\frac{2p}{m} = \frac{s}{3}$ and $\frac{2q}{m} = \frac{t}{3}$.
Since $\frac{m}{3}(s+t) = 2(p+q) \not\equiv 0 \pmod m$,
either $s \equiv t \equiv 1 \pmod 3$ or $s \equiv t \equiv 2 \pmod 3$ has to be held.
However, in both cases we have $2(p-q) = \frac{m}{3}(s-t) \equiv 0 \pmod m$, which contradicts our assumption.
Therefore, either $N(\frac{2p}{m})$ or $N(\frac{2q}{m})$ is greater than $3$.
Without loss of generality, we may assume that %$N(\frac{2p}{m}) \geq 3$ and
$N(\frac{2q}{m}) \geq 4$.

First, we consider the case $N(\frac{2p}{m}) = 3$.
Lemma~\ref{0115-2} implies that $\cos \frac{2p}{m}\pi$ is a rational number while $\cos \frac{2q}{m}\pi$ is an irrational number.
Moreover, since $N(\frac{2p}{m}) = 3$, we have $\cos \frac{2p}{m}\pi \neq 0$.
Thus, we see that $\cos \frac{2p}{m}\pi$ and $\cos \frac{2q}{m}\pi$ are $\MB{Q}$-linearly independent.

Next, we consider the case $N(\frac{2p}{m}) \geq 4$.
If $(N(\frac{2p}{m}), N(\frac{2q}{m})) \neq (5,5)$,
then Lemma~\ref{1208-1} implies the conclusion.
If $(N(\frac{2p}{m}), N(\frac{2q}{m})) = (5,5)$,
then there exist $s,t \in \MB{Z} \setminus 5\MB{Z}$ such that $\frac{2p}{m} = \frac{s}{5}$ and $\frac{2q}{m} = \frac{t}{5}$.
By our assumption, we have $s \pm t \not\equiv 0 \pmod 5$
since
\begin{align*}
2(p \pm q) \not\equiv 0 \pmod m
&\iff 10(p \pm q) \not\equiv 0 \pmod{5m} \\
&\iff m(s \pm t) \not\equiv 0 \pmod{5m} \\
&\iff s \pm t \not\equiv 0 \pmod 5.
\end{align*}
Therefore, Lemma~\ref{0115-3} leads to the conclusion.
\end{proof}

\begin{lem} \label{0115-6}
Let $m$ be a positive integer,
and let $p, q$ be distinct integers.
If $4p, 4q, 2(p \pm q) \not\equiv 0 \pmod m$,
then $\sin \frac{2p}{m}\pi$ and $\sin \frac{2q}{m}\pi$ are $\MB{Q}$-linearly independent.
\end{lem}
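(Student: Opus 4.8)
The plan is to reduce this sine statement to its cosine counterpart, Lemma~\ref{0115-4}, which has already been established under exactly the same congruence hypotheses. The bridge is the elementary identity $\sin\theta = \cos\bigl(\theta - \tfrac{\pi}{2}\bigr)$, applied to $\theta = \frac{2p}{m}\pi$ and $\theta = \frac{2q}{m}\pi$. Clearing denominators gives
\[ \sin\frac{2p}{m}\pi = \cos\frac{4p-m}{2m}\pi = \cos\frac{2(4p-m)}{4m}\pi, \]
and similarly for $q$. The deliberate choice of $4m$ rather than $2m$ as the common denominator is what keeps the new numerators integral irrespective of the parity of $m$; this is really the only delicate point in the argument, since a shift by $\pi/2$ could otherwise destroy the $\frac{2P}{M}$ shape demanded by Lemma~\ref{0115-4}.

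First I would set $M = 4m$, $P = 4p - m$ and $Q = 4q - m$, so that $\sin\frac{2p}{m}\pi = \cos\frac{2P}{M}\pi$ and $\sin\frac{2q}{m}\pi = \cos\frac{2Q}{M}\pi$. It then suffices to verify that the triple $(M, P, Q)$ meets the hypotheses of Lemma~\ref{0115-4}: that $P$ and $Q$ are distinct integers with $4P, 4Q, 2(P \pm Q) \not\equiv 0 \pmod M$. Distinctness is immediate from $P - Q = 4(p-q) \neq 0$, using $p \neq q$.

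Next I would translate each congruence back to the given hypotheses on $(m,p,q)$. Since $4P = 16p - 4m \equiv 16p \pmod{4m}$, the condition $4P \not\equiv 0 \pmod M$ is equivalent to $4p \not\equiv 0 \pmod m$, and symmetrically for $Q$. For the mixed conditions, $2(P+Q) \equiv 8(p+q) \pmod{4m}$ reduces to $2(p+q) \not\equiv 0 \pmod m$, and $2(P-Q) = 8(p-q) \equiv 8(p-q) \pmod{4m}$ reduces to $2(p-q) \not\equiv 0 \pmod m$. These are precisely the assumed relations $4p, 4q, 2(p\pm q) \not\equiv 0 \pmod m$. Applying Lemma~\ref{0115-4} to $(M,P,Q)$ then yields that $\cos\frac{2P}{M}\pi$ and $\cos\frac{2Q}{M}\pi$ are $\MB{Q}$-linearly independent, and since these coincide with $\sin\frac{2p}{m}\pi$ and $\sin\frac{2q}{m}\pi$, the proof is complete. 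I do not anticipate any genuine obstacle beyond this bookkeeping; the substantive content is already carried by the cosine lemma, and the only thing to watch is the integrality ensured by the passage to modulus $4m$.
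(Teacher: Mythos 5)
Your proof is correct and follows essentially the same route as the paper: both rewrite $\sin\frac{2p}{m}\pi$ as a cosine with denominator $4m$ (the paper uses $\cos\frac{2(m-4p)}{4m}\pi$, yours uses $\cos\frac{2(4p-m)}{4m}\pi$, identical since cosine is even) and then check the hypotheses of Lemma~\ref{0115-4} for the shifted data. In fact you spell out the congruence verification that the paper dismisses as straightforward, and your bookkeeping there is accurate.
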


\begin{proof}
Since $\sin \frac{2p}{m}\pi = \cos \frac{2(m-4p)}{4m} \pi$ and $\sin \frac{2q}{m}\pi = \cos \frac{2(m-4q)}{4m} \pi$,
it is sufficient to confirm that Lemma~\ref{0115-4} is applicable, i.e.,
$4(m-4p), 4(m-4q), 4(m-2p-2q), 8(p-q) \not\equiv 0 \pmod{4m}$.
This can be verified straightforwardly.
\end{proof}

\begin{lem} \label{0127-1}
Let $m$ be a positive integer,
and let $p, q$ be distinct integers.
If $4p, 4q, 2(p \pm q) \not\equiv 0 \pmod m$,
then the numbers $\zeta_m^{p}, \zeta_m^{-p}, \zeta_m^{q}, \zeta_m^{-q}$ are $\MB{Q}$-linearly independent.
%\textcolor{red}{
%Let $m$ be a positive integer,
%and let $p, q$ be distinct integers.
%The numbers $\zeta_m^{p}, \zeta_m^{-p}, \zeta_m^{q}, %\zeta_m^{-q}$ are $\MB{Q}$-linearly independent if and only if $4p, 4q, 2(p \pm q) \not\equiv 0 \pmod m$.}
\end{lem}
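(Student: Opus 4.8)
The plan is to reduce the four-term complex linear independence to the two real linear independence statements already established in Lemmas~\ref{0115-4} and~\ref{0115-6}, via the Euler decomposition of roots of unity into their cosine and sine parts. This is the natural approach because $\zeta_m^{\pm p}$ and $\zeta_m^{\pm q}$ carry exactly the real and imaginary data that those two lemmas control under the present hypotheses.

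First I would set $c_p = \cos \frac{2p}{m}\pi$, $s_p = \sin \frac{2p}{m}\pi$, and likewise $c_q$, $s_q$, so that $\zeta_m^{\pm p} = c_p \pm i s_p$ and $\zeta_m^{\pm q} = c_q \pm i s_q$. Suppose, toward proving independence, that $\alpha \zeta_m^{p} + \beta \zeta_m^{-p} + \gamma \zeta_m^{q} + \delta \zeta_m^{-q} = 0$ for some $\alpha, \beta, \gamma, \delta \in \MB{Q}$. Substituting the decompositions and collecting terms, the left-hand side equals $\bigl((\alpha+\beta)c_p + (\gamma+\delta)c_q\bigr) + i\bigl((\alpha-\beta)s_p + (\gamma-\delta)s_q\bigr)$.

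Since the coefficients are rational, hence real, the real and imaginary parts must vanish separately, giving $(\alpha+\beta)c_p + (\gamma+\delta)c_q = 0$ and $(\alpha-\beta)s_p + (\gamma-\delta)s_q = 0$. Here the hypotheses $4p, 4q, 2(p\pm q) \not\equiv 0 \pmod m$ are precisely what Lemmas~\ref{0115-4} and~\ref{0115-6} demand: the former yields that $c_p$ and $c_q$ are $\MB{Q}$-linearly independent, forcing $\alpha+\beta = 0$ and $\gamma+\delta = 0$; the latter yields that $s_p$ and $s_q$ are $\MB{Q}$-linearly independent, forcing $\alpha-\beta = 0$ and $\gamma-\delta = 0$. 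Solving these four equations gives $\alpha = \beta = \gamma = \delta = 0$, which establishes the claim.

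The heavy lifting — the number-theoretic analysis of when cosines and sines at rational multiples of $\pi$ are $\MB{Q}$-linearly independent — has already been dispatched in the preceding lemmas, so no genuine obstacle remains. The only point requiring any care is the observation that a relation with rational coefficients splits cleanly into two separate real relations; once that is noted, the argument is a short formal deduction with no casework on the values of $m$, $p$, or $q$.
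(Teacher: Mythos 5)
Your proof is correct and takes essentially the same route as the paper: separating the relation into real and imaginary parts is precisely the paper's conjugate-and-add manipulation followed by back-substitution, and both arguments invoke Lemmas~\ref{0115-4} and~\ref{0115-6} in identical roles to kill the cosine and sine coefficients respectively. There is nothing to fix.
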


\begin{proof}
We assume that a $\MB{Q}$-linear combination
\begin{equation} \label{0115-5}
c_1\zeta_m^{p} + c_2\zeta_m^{-p} +c_3 \zeta_m^{q} +c_4 \zeta_m^{-q} = 0.
\end{equation}
Take the conjugate of both sides and add it to Equality~(\ref{0115-5}).
We have
\begin{align*}
0 &= (c_1 + c_2)(\zeta_m^{p} + \zeta_m^{-p}) + (c_3 + c_4)(\zeta_m^{q} + \zeta_m^{-q}) \\
&= 2(c_1 + c_2) \cos \frac{2p}{m}\pi + 2(c_3 + c_4) \cos \frac{2q}{m}\pi.
\end{align*}
By Lemma~\ref{0115-4},
we have $c_1 + c_2 = 0$ and $c_3 + c_4 = 0$.
Substituting them into Equality~(\ref{0115-5}),
we have
\begin{align*}
0 &= c_1\zeta_m^{p} - c_1\zeta_m^{-p} +c_3 \zeta_m^{q} -c_3 \zeta_m^{-q} \\
&= c_1(\zeta_m^{p} - \zeta_m^{-p}) + c_3(\zeta_m^{q} - \zeta_m^{-q}) \\
&= 2i c_1 \sin \frac{2p}{m}\pi + 2i c_3 \sin \frac{2q}{m}\pi.
\end{align*}
Lemma~\ref{0115-6} implies that $c_1 = c_3 = 0$.
Therefore, we have $c_1 = c_2 = c_3 = c_4 = 0$,
and hence we see that $\zeta_m^{p}, \zeta_m^{-p}, \zeta_m^{q}, \zeta_m^{-q}$ are $\MB{Q}$-linearly independent.
\end{proof}

\subsection{Canonical bases of cyclotomic fields}
For a set $A$ and a positive integer $k$,
we denote by $\C{A}{k}$ the set of all $k$-subsets of $A$, i.e.,
\[ \C{A}{k} = \{ S \mid S \subset A, |S| = k \}. \]
For a prime number $p$,
we define
\[
p^* = \begin{cases}
4 \, &\text{if $p=2$,} \\
p \, &\text{otherwise.}
\end{cases}
\]
\TB{Let $\varphi$ denote Euler's totient function, i.e., $\varphi(n) = |\mathbb{Z}_n^\times|$ for a positive integer $n$.}\MNOTE[green]{Change B.21}
The following is on integral bases of the cyclotomic fields,
which plays an important role in this section.
We cite a result by Bosma, but slightly change notation.

\begin{thm}[Theorem~5.1 in \cite{bosma1990canonical}] \label{0117-1}
Let $n$ be a positive positive integer with $n \not\equiv 2 \pmod 4$,
and let $n = p_1^{f_1} p_2^{f_2} \cdots p_t^{f_t}$ be the prime factorization of $n$.
For each prime factor $p_j$,
let $A_{p_j} \in \C{[p_j]}{\varphi(p_j^*)}$,
and $B_{p_j} = \left[ \frac{p_j^{f_j}}{p_j^*} \right]$.
Then the set
\begin{align}	\label{0117-1:1}
I  = \left\{ \prod_{j=1}^t \left( \zeta_{p_j^*}^{a_j} \zeta_{p_j^{f_j}}^{b_j} \right)
\, \middle| \,
a_1 \in A_{p_1}, \dots, a_t \in A_{p_t},
b_1 \in B_{p_1}, \dots, b_t \in B_{p_t}
\right\}
\end{align}
forms a basis for $\MB{Q}(\zeta_n)$ over $\MB{Q}$.
Moreover, for every divisor $d$ of $n$,
the set $I_d = I \cap \MB{Q}(\zeta_d)$
forms an integral basis for $\MB{Q}(\zeta_d)$.
\end{thm}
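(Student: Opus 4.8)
The plan is to prove the theorem in three stages: reduce the general case to prime powers by decomposing the ring of integers, settle the prime power case through a two-step field tower, and deduce the subfield-compatibility (the ``moreover'' part) by tracking which elements of $I$ lie in a given $\MB{Q}(\zeta_d)$; throughout I would use the standard fact that $\MC{O}_{\MB{Q}(\zeta_m)} = \MB{Z}[\zeta_m]$ for every $m$. For the reduction, write $n = p_1^{f_1}\cdots p_t^{f_t}$; then $\MB{Q}(\zeta_n)$ is the compositum of the fields $\MB{Q}(\zeta_{p_j^{f_j}})$, whose discriminants are powers of the distinct primes $p_j$ and hence pairwise coprime. The standard consequence is that $\MC{O}_{\MB{Q}(\zeta_n)}$ is the tensor product over $\MB{Z}$ of the $\MC{O}_{\MB{Q}(\zeta_{p_j^{f_j}})}$, so a product of integral bases of the factors is an integral basis of $\MB{Q}(\zeta_n)$. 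As the set $I$ of~\eqref{0117-1:1} is exactly such a product, it suffices to show for each $j$ that $\{\zeta_{p^*}^{a}\zeta_{p^f}^{b} : a \in A_{p},\, b \in B_{p}\}$ is an integral basis of $\MB{Q}(\zeta_{p^f})$, where I abbreviate $p = p_j$ and $f = f_j$.

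\emph{The prime power case.} Here I would use the tower $\MB{Q} \subset \MB{Q}(\zeta_{p^*}) \subset \MB{Q}(\zeta_{p^f})$; the convention $p^* = 4$ when $p = 2$ is precisely what repairs the degenerate case $\MB{Q}(\zeta_2) = \MB{Q}$ by forcing $\MB{Q}(\zeta_4) = \MB{Q}(i)$ to serve as the base. For the bottom step, the $\varphi(p^*)$ powers $\zeta_{p^*}^{a}$ with $a \in A_p$ form an integral basis of $\MB{Q}(\zeta_{p^*})$: for odd $p$ these are $p-1$ of the $p$-th roots of unity and the relation $\sum_{k=0}^{p-1}\zeta_p^{k} = 0$ converts them to and from the power basis by a unimodular integer matrix, while for $p=2$ the forced choice $A_2 = \{0,1\}$ gives $\{1,i\}$. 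For the top step, $\zeta_{p^f}^{p^f/p^*} = \zeta_{p^*}$, so $\zeta_{p^f}$ is a root of the monic polynomial $x^{p^f/p^*} - \zeta_{p^*} \in \MC{O}_{\MB{Q}(\zeta_{p^*})}[x]$, whose degree $p^f/p^* = |B_p|$ equals $[\MB{Q}(\zeta_{p^f}):\MB{Q}(\zeta_{p^*})]$, so it is the minimal polynomial. Since $\zeta_{p^*}$ is itself a power of $\zeta_{p^f}$, we have $\MC{O}_{\MB{Q}(\zeta_{p^f})} = \MB{Z}[\zeta_{p^f}] = \MC{O}_{\MB{Q}(\zeta_{p^*})}[\zeta_{p^f}]$, which is therefore free over $\MC{O}_{\MB{Q}(\zeta_{p^*})}$ with relative integral basis $\{\zeta_{p^f}^{b} : b \in B_p\}$. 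Multiplying the two bases yields the asserted integral basis of $\MB{Q}(\zeta_{p^f})$ over $\MB{Q}$.

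\emph{Subfield compatibility.} For a divisor $d = \prod_j p_j^{g_j}$ of $n$ with $0 \le g_j \le f_j$, linear disjointness of the prime power factors reduces $I \cap \MB{Q}(\zeta_d)$ to a product of local intersections, one per prime. Locally, writing $\zeta_{p^*}^{a}\zeta_{p^f}^{b}$ as a single power of $\zeta_{p^f}$, it lies in $\MB{Q}(\zeta_{p^g})$ if and only if that exponent is divisible by $p^{f-g}$; because $0 \le b < p^f/p^*$ and $a$ contributes a multiple of $p^{f-1}$ (resp.\ $p^{f-2}$), this forces $b = p^{f-g}b'$ (for $g \ge 1$) with $\zeta_{p^f}^{b} = \zeta_{p^g}^{b'}$ and $b'$ ranging over the set $B_p$ computed for $p^g$, the case $g = 0$ leaving only the element $1$. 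Hence the surviving elements are exactly the local basis elements attached to $p^g$, and taking the product over $j$ identifies $I_d$ with the basis produced by the first two stages applied to $d$, which is therefore an integral basis of $\MB{Q}(\zeta_d)$.

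The main obstacle is the subfield-compatibility step: the exponent bookkeeping must be arranged so that it works uniformly over all divisors $d$, and in particular one must check the boundary cases for $p = 2$ (namely $g = 0$ and $g = 1$, where $p^* = 4$ exceeds $p^g$ and $\MB{Q}(\zeta_2) = \MB{Q}$), for which the clean divisibility argument collapses to the single element $1$ and needs separate verification. By contrast, the reduction and the prime power stages are routine once $\MC{O}_{\MB{Q}(\zeta_m)} = \MB{Z}[\zeta_m]$ and the coprimality of cyclotomic discriminants are granted.
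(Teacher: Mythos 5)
The paper gives no proof of this statement at all---it is quoted (with ``slightly changed notation'') from Bosma's paper---so your proposal can only be measured against the standard argument behind Bosma's theorem and against the statement itself. Your stages 1 and 2 are correct and are essentially that standard route: coprimality of the discriminants of the fields $\MB{Q}(\zeta_{p_j^{f_j}})$ gives $\MC{O}_{\MB{Q}(\zeta_n)}$ as the tensor product of the local rings of integers; any $\varphi(p^*)$-element subset of the $p$-th roots of unity is an integral basis of $\MB{Q}(\zeta_{p^*})$ via a unimodular change of basis (and $A_2=\{0,1\}$ gives $\{1,i\}$); and $x^{p^f/p^*}-\zeta_{p^*}$ supplies the relative power basis $\{\zeta_{p^f}^b : b \in B_p\}$. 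All of that is sound.

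The genuine gap is in stage 3, at exactly the point you wave through with ``the case $g=0$ leaving only the element $1$.'' The local set $\{\zeta_{p^*}^a\zeta_{p^f}^b : a \in A_p,\, b \in B_p\}$ contains $1$ only when $0 \in A_p$, and the theorem as restated here lets $A_p$ be an \emph{arbitrary} $\varphi(p^*)$-subset of $[p]$. With that freedom the ``moreover'' clause is actually false, so no exponent bookkeeping can close your stage 3 as stated: take $n=3$, $A_3=\{1,2\}$, $d=1$; then $I=\{\zeta_3,\zeta_3^2\}$ and $I_1 = I \cap \MB{Q} = \emptyset$, which is not an integral basis of $\MB{Q}$. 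Less degenerately, for $n=15$, $A_3=\{1,2\}$, $d=5$, every element of $I$ has order $3$ or $15$, while the roots of unity in $\MB{Q}(\zeta_5)$ have order dividing $10$, so $I_5=\emptyset$. Your own computation shows where this enters: for odd $p$ and $g\ge 1$ the condition $p^{f-g}\mid ap^{f-1}+b$ reduces to $p^{f-g}\mid b$ for \emph{any} $A_p$, and the surviving elements are exactly the basis for $p^g$; but for $g=0$ it forces $a=b=0$, which is available only if $0\in A_p$. Ironically, the boundary you flagged as the main obstacle ($p=2$, $g\in\{0,1\}$) is safe, precisely because $A_2=\{0,1\}$ is forced to contain $0$ (and because $a\cdot 2^{f-2}+b<2^{f-1}$ rules out producing the sign $-1$, which is also the check needed to split $I\cap\MB{Q}(\zeta_d)$ into local intersections when $d$ is odd). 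So your proof becomes correct, and matches Bosma's canonical construction, once the hypothesis $0 \in A_{p_j}$ for every odd prime factor is added; without it only the $d=n$ assertion survives. Fortunately, the paper only ever invokes the case $d=n$ (in the proofs of Lemmas~\ref{0210-1} and~\ref{0120-1}, the latter even choosing $A_3=\{1,2\}$), for which your stages 1 and 2 already constitute a complete proof for arbitrary $A_{p_j}$.
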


\begin{ex}
We take $n = 36 = 2^2 \cdot 3^2$ as an example to help understand Theorem~\ref{0117-1}.
Since the $\MB{Q}$-minimal polynomial of $\zeta_{36}$ is $x^{12}-x^6+1$,
we can take $1, \zeta_{36}, \zeta_{36}^2, \dots, \zeta_{36}^{11}$ as a $\MB{Q}$-basis of $\MB{Q}(\zeta_{36})$.
On the other hand, 
we can take another basis of $\MB{Q}(\zeta_{36})$ using Theorem~\ref{0117-1}.
The sets $A_2 = \C{[2]}{\varphi(2^*)} = \{0,1\}$,
$B_2 = \left[ \frac{2^{2}}{2^*} \right] = \{0\}$,
and $B_3 = \left[ \frac{3^{2}}{3^*} \right] = \{0,1,2\}$
are automatically determined, while the set $A_3 \in \C{[3]}{\varphi(3^*)}$ can be chosen arbitrarily, for example $A_3  = \{0,2\}$.
Then,
\begin{align*}
I &= \left\{ \zeta_{2^*}^{a_2} \zeta_{2^{2}}^{b_2} \zeta_{3^*}^{a_3} \zeta_{3^{2}}^{b_3} \, \middle| \, a_2 \in A_2, a_3 \in A_3,
b_2 \in B_2, b_3 \in B_3\right\} \\
&= \left\{ \zeta_{4}^{a_2} \zeta_{4}^{b_2} \zeta_{3}^{a_3} \zeta_{9}^{b_3} \, \middle| \, a_2 \in \{0,1\}, a_3 \in \{0,2\},
b_2 \in \{0\}, b_3 \in \{0,1,2\} \right\} \\
&= \{ \zeta_{36}^0, \TB{\zeta_{36}^{1},} \zeta_{36}^{4}, \TB{\zeta_{36}^{5},} \zeta_{36}^{8}, \zeta_{36}^{9},
\zeta_{36}^{13}, \zeta_{36}^{17}, \TR{\zeta_{36}^{21},} \zeta_{36}^{24}, \TR{\zeta_{36}^{25},} \zeta_{36}^{28} \TR{, \zeta_{36}^{29}}, \zeta_{36}^{32} \TB{, \zeta_{36}^{23}}\MNOTE[green]{Change~B.22}
\}
\end{align*}
forms a $\MB{Q}$-basis for $\MB{Q}(\zeta_{36})$ by Theorem~\ref{0117-1}.
Furthermore, $I_n = I \cap \MB{Q}(\zeta_{36}) = I$ forms an integral basis for $\MB{Q}(\zeta_{36})$. 
\end{ex}

\subsection{Decompositions of $\zeta_n^x$}
Display the prime factorization of an integer $n \geq 2$ as $n = p_1^{f_1} p_2^{f_2} \cdots p_t^{f_t}$,
where $p_1, \ldots, p_t$ are pairwise distinct prime numbers, and $f_1, \ldots, f_t$ are positive integers.
Then, we define the mapping $\Phi_n: \prod_{j=1}^t [p_j^{f_j}] \to \{ \zeta_n^{j} \mid j \in [n] \}$ by
\[ \Phi_n(x_1, x_2, \dots, x_t)= \prod_{j=1}^t \zeta_{p_j^{f_j}}^{x_j}. \]

\begin{lem} \label{0117-2}
The mapping $\Phi_n$ is bijective for any integer $n\geq 2$.
\end{lem}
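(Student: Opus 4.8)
The plan is to read this as the Chinese Remainder Theorem in disguise. Write $m_j = p_j^{f_j}$, so that $n = \prod_{j=1}^t m_j$ with the $m_j$ pairwise coprime. First I would note that the domain $\prod_{j=1}^t [m_j]$ and the codomain $\{\zeta_n^j \mid j \in [n]\}$ both have exactly $n$ elements, the latter because $\zeta_n$ has multiplicative order $n$. Since these are finite sets of equal size, it suffices to establish injectivity of $\Phi_n$.

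The key reduction is to linearize $\Phi_n$. From $\zeta_{m_j} = \zeta_n^{n/m_j}$ we get $\zeta_{m_j}^{x_j} = \zeta_n^{(n/m_j)x_j}$, hence
\[ \Phi_n(x_1, \dots, x_t) = \zeta_n^{\sum_{j=1}^t (n/m_j)\, x_j}. \]
Because the exponentiation map $\MB{Z}_n \to \{\zeta_n^j \mid j \in [n]\}$ sending $k \mapsto \zeta_n^k$ is itself a bijection, $\Phi_n$ is injective if and only if the additive map $\psi \colon \prod_{j=1}^t \MB{Z}_{m_j} \to \MB{Z}_n$ defined by $\psi(x_1, \dots, x_t) = \sum_{j=1}^t (n/m_j) x_j$ is injective.

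To show $\psi$ has trivial kernel, I would suppose $\sum_{j=1}^t (n/m_j) x_j \equiv 0 \pmod n$ and fix an index $i$. Reducing this congruence modulo $m_i$ annihilates every term with $j \neq i$, since $m_i \mid n/m_j$ whenever $j \neq i$; what remains is $(n/m_i) x_i \equiv 0 \pmod{m_i}$. The crux is the coprimality $\gcd(n/m_i, m_i) = 1$, which holds because $n/m_i = \prod_{k \neq i} p_k^{f_k}$ shares no prime with $m_i = p_i^{f_i}$. Thus $n/m_i$ is invertible modulo $m_i$, forcing $x_i \equiv 0 \pmod{m_i}$. As $i$ was arbitrary, $\psi$ is injective, and therefore so is $\Phi_n$; equality of cardinalities then upgrades injectivity to bijectivity.

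The only real obstacle is this reduction-and-coprimality step; everything else is bookkeeping about the order of $\zeta_n$ and the sizes of the two sets. I would present the coprimality argument carefully, since it is exactly the content of the Chinese Remainder Theorem being invoked, and it is what guarantees that the exponents $\sum_j (n/m_j)x_j$ sweep out all residues modulo $n$ without collision.
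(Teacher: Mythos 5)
Your proof is correct, but it takes the opposite half of the counting argument from the paper. Both arguments begin identically: the domain and codomain each have $n$ elements, so bijectivity follows from either injectivity or surjectivity alone. The paper chooses surjectivity and the global B\'ezout identity: since $\gcd\bigl(n p_1^{-f_1}, \dots, n p_t^{-f_t}\bigr) = 1$, an arbitrary exponent $m$ can be written as $m = \sum_{j} x_j \, (n p_j^{-f_j})$, and reducing each $x_j$ modulo $p_j^{f_j}$ exhibits a preimage of $\zeta_n^m$. You instead choose injectivity: after linearizing $\Phi_n$ through the bijection $k \mapsto \zeta_n^k$ on $\MB{Z}_n$, you show the homomorphism $\psi(x_1,\dots,x_t) = \sum_j (n/m_j)\, x_j$ has trivial kernel by reducing modulo each $m_i = p_i^{f_i}$ and using the local coprimality $\gcd(n/m_i,\, m_i) = 1$. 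The two routes invoke dual arithmetic inputs --- a single B\'ezout combination for the whole family versus invertibility of $n/m_i$ modulo $m_i$ for each $i$ separately --- and your version has the mild advantage of constructing nothing: each coordinate is killed by a purely local reduction, whereas the paper must produce the integers $x_j$ and then verify that $\Phi$ applied to their reductions recovers $\zeta_n^m$. One point worth making explicit if you write this up: $\psi$ is well defined on $\prod_j \MB{Z}_{m_j}$ (changing $x_j$ by $m_j$ changes the sum by a multiple of $n$) and is a group homomorphism, which is what licenses the trivial-kernel criterion for injectivity; as stated, your argument uses this silently.
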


\begin{proof}
Since the number of elements in the domain of $\Phi_n$ and that in the codomain are equal,
it is sufficient to show that $\Phi_n$ is surjective.
Take $\zeta_n^m \in \{ \zeta_n^{j} \mid j \in [n] \}$.
Let $n = p_1^{f_1} p_2^{f_2} \cdots p_t^{f_t}$ be the prime factorization of $n$.
The greatest common divisor of \TR{$np_1^{-f_1}$, $np_1^{-f_2}$, \dots, $np_1^{-f_t}$}\TB{$np_1^{-f_1}$, $np_2^{-f_2}$, \dots, $np_t^{-f_t}$}\MNOTE[green]{Change B.23} is $1$,
so there exist integers $x_1, x_2, \dots, x_t$ such that
%\[ \sum_{j=1}^t x_j \frac{n}{p_j^{f_j}} = m. \]
\[ m = \sum_{j=1}^t x_j (np_j^{-f_j})
= \sum_{j=1}^t n x_j p_j^{-f_j}. \]
Let $x_j'$ be the remainder of $x_j$ divided by $p_j^{f_j}$ for each $j$.
Then, we have $x_j' \in [p_j^{f_j}]$, and
\[
\Phi(x_1', x_2', \dots, x_t')
= \prod_{j=1}^t \zeta_{p_j^{f_j}}^{x_j'} \\
= \prod_{j=1}^t \zeta_{p_j^{f_j}}^{x_j} \\
= \prod_{j=1}^t \zeta_{n}^{n x_j p_j^{-f_j}} \\ %%
%= \prod_{j=1}^t \zeta_{n}^{x_j \cdot \frac{n}{p_j^{f_j}}} \\ %%
= \zeta_n^{\sum_{j=1}^t n x_j p_j^{-f_j}} \\
= \zeta_n^{m}.
\]
Therefore, we see that $\Phi_n$ is surjective.
\end{proof}

\begin{defi}	\label{phi}
Let $n \geq 2$ be an integer,
and let $n = p_1^{f_1} p_2^{f_2} \cdots p_t^{f_t}$ be the prime factorization of $n$,
where $f_1, \ldots, f_t$ are positive integers.
Since $\Phi_n$ is bijective by Lemma~\ref{0117-2},
let $\Psi_n$ be the inverse mapping of $\Phi_n$.
For $\zeta_n^x \in \{ \zeta_n^j \mid j \in [n] \}$,
we write
\begin{align*}
	\Psi_n(\zeta_n^x) = (x_1, x_2, \dots, x_t) \in [p_1^{f_1}] \times [p_2^{f_2}] \times \cdots \times [p_t^{f_t}].
\end{align*}
For each prime factor $p_j$, we define $\Psi_n^{(p_j)}(\zeta_n^x) := x_j$.
In the case of $p _j \neq 2$, we define
\begin{align*}
	\pi_n^{(p_j)}(\zeta_n^x) :=   x_j' 
	\quad \text{ and } \quad
	\theta_n^{(p_j)}(\zeta_n^x) :=   x_j'' 
\end{align*}
where $x_j' \in [p_j]$ and $x_j'' \in [p_j^{f_j-1}]$ such that $x_j = x_j' p_j^{f_j-1} + x_j''$.
In the case of $p_j = 2$ and $f_j \geq 2$,
define 
\begin{align*}
	\pi_n^{(2)}(\zeta_n^x) :=   x_j' 
	\quad \text{ and } \quad
	\theta_n^{(2)}(\zeta_n^x) :=   x_j'' 
\end{align*}
where
%$x_1' \in [2]$ and $x_1'' \in [2^{f_1-1}]$ satisfy $x_1' = x_1 \in [2]$ and $x_1''=0$ if $f_1 = 1$, and 
$x_j' \in [4]$ and $x_j'' \in [2^{f_j-2}]$ satisfy $x_j = x_j' 2^{f_j-2} + x_j''$. % if $f_1 \geq 2$.
\end{defi}

Note that we do not define $\Phi_n$ in the case of $n \equiv 2 \pmod 4$, i.e., $p_j=2$ and $f_j=1$
since this case is reduced to the case of $n \equiv 0  \pmod 4$ in proofs of our results.

\begin{ex}
We take $n := 36 = 2^2 \cdot 3^2$ and $x := 5$ as an example to help understand the mappings in Definition~\ref{phi}.
Then, since $\zeta_{36}^5 = \zeta_4^1 \zeta_9^8$,
we have $\Psi_{n}(\zeta_n^{x}) = (1, 8)$,
$\Psi_{n}^{(2)}(\zeta_n^{x}) = 1$ and
$\Psi_{n}^{(3)}(\zeta_n^{x}) = 8$.
Also, since $1 = 1 \cdot 2^{2-2} + 0$ and $8 = 2 \cdot 3^{2-1} + 2$,
we have $\pi_n^{(2)}(\zeta_n^x) = 1$, 
$\theta_n^{(2)}= 0$ 
$\pi_n^{(3)}(\zeta_n^x) = 2$, 
and $\theta_n^{(3)}(\zeta_n^x) = 2$.
\end{ex}

\TB{
The reason for defining the functions $\pi_m^{(p)}$ and $\theta_m^{(p)}$ is that they are useful tools for checking whether the element 
$$
    \Delta = \frac{\pm \zeta_{m}^{p} \pm \zeta_{m}^{-p} \pm \zeta_{m}^{q} \pm \zeta_{m}^{-q}}{2}
$$
in~\eqref{0120-2} of the cyclotomic field $\mathbb{Q}(\zeta_m)$ is an algebraic integer.
Indeed, the element $\Delta$ is an algebraic integer if and only if $\Delta$ can be expressed as a $\mathbb{Z}$-linear combination of an integral basis of the cyclotomic field $\mathbb{Q}(\zeta_m)$.
Therefore, we will construct an integral basis $I$ containing as many of $\zeta_m^{p}$, $\zeta_m^{-p}$, $\zeta_m^{q}$, $\zeta_m^{-q}$ as possible,
and then check whether $\Delta$ is a $\mathbb{Z}$-linear combination of $I$.
In fact, we can construct such an integral basis by Theorem~\ref{0117-1}, which asserts that $\zeta_m^x$ is contained in $I$ if $A_p$ and $B_p$ are chosen such that $\pi_m^{(p)}(\zeta_m^x) \in A_p$ and $\theta_m^{(p)}(\zeta_m^x) \in B_p$ for every prime factor $p$ of $m$.
This is the reason why we introduced $\pi_m^{(p)}$ and $\theta_m^{(p)}$.
}

\TB{
In the following subsection, we will determine whether $\Delta$ is an algebraic integer for some specific values of $m,p,q$.
Before proceeding, we prepare a lemma concerning $\pi_m^{(p)}$ and $\theta_m^{(p)}$.
}
\MNOTE[green]{Change B.24}

\begin{lem}	\label{red}
    Let $n \geq 2$ be an integer, and let $x \in [n]$.
    Let $p$ be a prime factor of $n$.
    \begin{enumerate}
%		\item If $n \not\equiv 2 \pmod 4$ and $\zeta_n^x \not\in \{ \zeta_{n/2}^j \mid j \in [n/2] \}$, then $-\zeta_n^x \in \{ \zeta_{n/2}^j \mid j \in [n/2] \}$.	\label{red:1}
	\item If $n \equiv 0 \pmod 4$, then 
        $$
        \pi^{(\TR{2}\TB{p}\MNOTE[green]{Change~B.24})}_n(-\zeta_n^x) = \begin{cases}
            \left( \pi^{(2)}_n(\zeta_n^x)+2 \right) \bmod 4 & \text{ if } p=2, \\
            \pi^{(p)}_n(\zeta_n^x) & \text{ otherwise,}
            \end{cases}
        $$
        and $\theta^{(p)}_n(-\zeta_n^x) = \theta^{(p)}_n(\zeta_n^x) $
	for every prime factor $p$ of $n$.	
	\label{red:1}
	\item If $n \equiv 0 \pmod 3$, then 	
		$
			\pi^{(3)}_n(\zeta_n^{-x}) = \begin{cases}
	   		\left( 3-\pi^{(3)}_n(\zeta_n^x) \right) \bmod 3 & \text{ if } \theta^{(3)}_m(\zeta_m^x) = 0, \\
				2-\pi^{(3)}_n(\zeta_n^x) & \text{ otherwise.}
			\end{cases}
		$	\label{red:2}
    \end{enumerate}
\end{lem}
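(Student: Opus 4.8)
The plan is to reduce both parts to the fact that the bijection $\Phi_n$ of Lemma~\ref{0117-2} is a group isomorphism: since $\Phi_n(x_1,\dots,x_t)\cdot\Phi_n(y_1,\dots,y_t)=\Phi_n\bigl((x_1+y_1)\bmod p_1^{f_1},\dots,(x_t+y_t)\bmod p_t^{f_t}\bigr)$, multiplying $\zeta_n^x$ by a fixed root of unity affects each prime-power digit additively and independently. I would first record this multiplicativity, and then treat the effect on the digits $\pi_n^{(p)}$ and $\theta_n^{(p)}$ one prime at a time, using the decompositions in Definition~\ref{phi}.

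For Part~\ref{red:1}, I would express $-1$ inside the relevant prime-power component. Since $n\equiv 0\pmod 4$ we may take $p_1=2$ with $f_1\ge 2$, and then $-1=\zeta_{2^{f_1}}^{2^{f_1-1}}$. Writing $\Psi_n(\zeta_n^x)=(x_1,\dots,x_t)$, multiplicativity gives $\Psi_n(-\zeta_n^x)=\bigl((x_1+2^{f_1-1})\bmod 2^{f_1},\,x_2,\dots,x_t\bigr)$. For every odd prime factor $p$ the corresponding digit is untouched, so both $\pi_n^{(p)}$ and $\theta_n^{(p)}$ are unchanged there. For $p=2$, writing $x_1=x_1'\cdot 2^{f_1-2}+x_1''$ with $x_1'\in[4]$ and noting that $2^{f_1-1}=2\cdot 2^{f_1-2}$, the shift increments the top digit $x_1'$ by $2$ while fixing $x_1''$; after reducing modulo $2^{f_1}=4\cdot 2^{f_1-2}$ the top slot becomes $(x_1'+2)\bmod 4$ and the bottom slot stays $x_1''$, which yields exactly the claimed formulas for $\pi_n^{(2)}$ and $\theta_n^{(2)}$.

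For Part~\ref{red:2}, negation acts componentwise as $x_j\mapsto(-x_j)\bmod p_j^{f_j}$, so it suffices to analyze the $3$-component. With $p_k=3$ and $x_k=x_k'\cdot 3^{f_k-1}+x_k''$ where $x_k'=\pi_n^{(3)}(\zeta_n^x)$ and $x_k''=\theta_n^{(3)}(\zeta_n^x)$, I would compute the representative of $-x_k$ in $[3^{f_k}]$ and split on whether $x_k''=0$. If $x_k''=0$, then $(-x_k)\bmod 3^{f_k}=\bigl((3-x_k')\bmod 3\bigr)\cdot 3^{f_k-1}$, so the top digit is $(3-\pi_n^{(3)}(\zeta_n^x))\bmod 3$. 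If $x_k''\neq 0$, a borrow occurs: $3^{f_k}-x_k=(2-x_k')\cdot 3^{f_k-1}+(3^{f_k-1}-x_k'')$, and since $x_k'\in\{0,1,2\}$ the top digit $2-x_k'$ already lies in $[3]$, giving $\pi_n^{(3)}(\zeta_n^{-x})=2-\pi_n^{(3)}(\zeta_n^x)$. These are precisely the two cases of the statement.

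The computations are elementary, so there is no genuine obstacle; the only thing demanding care is the bookkeeping of the modular reductions, namely verifying that after each shift the top digit lands in the correct residue range ($[4]$ when $p=2$ in Part~\ref{red:1}, and $[3]$ when $p=3$ in Part~\ref{red:2}). The subtlest point is the borrow in Part~\ref{red:2}, which is exactly why the formula bifurcates according to whether $\theta_n^{(3)}(\zeta_n^x)$ vanishes: a nonzero lower part forces a carry out of the bottom slot and changes the formula for the top digit from $(3-\pi)\bmod 3$ to $2-\pi$.
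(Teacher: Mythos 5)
Your proposal is correct and follows essentially the same route as the paper's proof: both express $-1$ (resp.\ inversion) inside the relevant prime-power component of the $\Phi_n$-decomposition --- the paper multiplicatively via $-1=\zeta_4^2$ and $\zeta_{3^{f_1}}^{-\theta}=\zeta_3^{2}\zeta_{3^{f_1}}^{3^{f_1-1}-\theta}$, you additively via digit shifts --- and both hinge on the same borrow-driven case split according to whether $\theta_n^{(3)}(\zeta_n^x)=0$. The digit bookkeeping you flag (top digit landing in $[4]$ or $[3]$) is exactly the verification the paper's displayed computations carry out, so there is no gap.
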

\begin{proof}
%	We write $q_j := p_j^{f_j}$ for short.
%	We prove~\ref{red:1}.
%	Assume $n \not\equiv 2 \pmod 4$ and $\zeta_n^x \not\in \{ \zeta_{n/2}^j \mid j \in [n/2] \}$.
%	Then $p_1 = 2$, $f_1=1$ and $\Psi_n^{(2)}(x) = 1$.
%	Hence
%	\begin{align*}
%		-\zeta_n^x 
%		=  -\zeta_2^{\Psi_n^{(2)}(x)}\zeta_{p_2}^{\Psi_n^{(q_2)}(x)} \cdots \zeta_{q_t}^{\Psi_n^{(p_t)}(x)} 
%		=  \zeta_{q_2}^{\Psi_n^{(p_2)}(x)} \cdots \zeta_{q_t}^{\Psi_n^{(p_t)}(x)}
%		\in \{ \zeta_{n/2}^j \mid j \in [n/2] \}.
%	\end{align*}
%	
%	Next we prove~\ref{red:2}.
	Let $n = p_1^{f_1} p_2^{f_2} \cdots p_t^{f_t}$ be the prime factorization of $n$,
	and set $q_i := p_i^{f_i}$.
	Write $z := \zeta_n^x$ and $x_i := \Psi_n^{(p_i)}(z)$ for short.
	First we show~\ref{red:1}.
	Assume $n \equiv 0 \pmod 4$.
	Then we may assume $p_1=2$ and $f_1 \geq 2$.
	We have
	\begin{align*}
		-\zeta_n^x 
		&=  -\zeta_{q_1}^{x_1} \cdot \zeta_{q_2}^{x_2} \cdots \zeta_{q_t}^{x_3} 
		=  -\zeta_{4}^{\pi_n^{(2)}(z)}\zeta_{q_1}^{\theta_n^{(2)}(z)}  \zeta_{q_2}^{x_2} \cdots \zeta_{q_t}^{x_3} 
		=  \zeta_{4}^{\pi_n^{(2)}(z) + 2}\zeta_{q_1}^{\theta_n^{(2)}(z)} \zeta_{q_2}^{x_2} \cdots \zeta_{q_t}^{x_3} .
	\end{align*}
	Hence desired conclusion holds.
	
	Next we show~\ref{red:2}.
	Assume $n \equiv 0 \pmod 3$.
	Then we may assume $p_1 = 3$ and $f_1 \geq 1$.
	We have 
	\begin{align*}
		\TB{\zeta_{3^{f_1}}^{\Psi^{(3)}_n(z^{-1})}}\MNOTE[green]{Change~B.26}
            = \zeta_{3}^{\pi_n^{(3)}(z^{-1})}\zeta_{3^{f_1}}^{\theta_n^{(3)}(z^{-1})}
		= \zeta_{3}^{-\pi_n^{(3)}(z)}\zeta_{3^{f_1}}^{-\theta_n^{(3)}(z)}
		= \begin{cases}
			\zeta_{3}^{3-\pi_n^{(3)}(z)} & \text{ if } \theta^{(3)}_m(z) = 0, \\
			\zeta_{3}^{2-\pi_n^{(3)}(z)}\zeta_{3^{f_1}}^{3^{f_1-1}-\theta_n^{(3)}(z)}	& \text{ otherwise}.
		\end{cases}
	\end{align*}
	Hence the desired result holds.
\end{proof}

\subsection{Proof of Theorem~\ref{M1}}
By using Theorem~\ref{0117-1},
we can change integral bases of cyclotomic fields.
First, we provide one lemma as a situation where Theorem~\ref{0117-1} can be easily applied.

\begin{lem} \label{0210-1}
Let $m \geq 2$ be an integer with $m \not\equiv 2 \pmod 4$,
and let $S \subset \{1,2, \dots, m-1\}$.
If $|S| \leq \varphi(p^*)$ for every prime factor $p \neq 2$ of $m$,
then there exists an integral basis for $\MB{Q}(\zeta_m)$ containing either $\zeta_m^r$ or $-\zeta_m^r$ for each $r \in S$.
In particular,
if all $\pm \zeta_m^r$ $(r \in S)$ are pairwise-distinct, then
\begin{align}	\label{0210-1:1}
	 \frac{1}{2}\sum_{r \in S} \pm \zeta_m^r
\end{align}
is not an algebraic integer.
\end{lem}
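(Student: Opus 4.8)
The plan is to build a specific integral basis for $\MB{Q}(\zeta_m)$ via Bosma's theorem (Theorem~\ref{0117-1}), tailored so that each $\zeta_m^r$, up to sign, is one of its elements; the coefficient $\tfrac12$ then forces a non-integral coordinate. Recall that an element of $\MB{Q}(\zeta_m)$ is an algebraic integer precisely when all its coordinates with respect to an integral basis lie in $\MB{Z}$, so once the first assertion is established, the ``in particular'' part will follow at once.

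First I would set up the data in Theorem~\ref{0117-1}. Write $m = p_1^{f_1}\cdots p_t^{f_t}$. For the prime $2$, which occurs only when $4 \mid m$ (with exponent $f \ge 2$) since $m \not\equiv 2 \pmod 4$, the set $A_2 \in \C{[2]}{\varphi(4)}$ is forced to equal $\{0,1\}$, and $B_2 = [2^{f-2}]$. For each odd prime factor $p$ I would choose $A_p$ as follows: the numbers $\pi_m^{(p)}(\zeta_m^r)$ for $r \in S$ lie in $[p]$ and take at most $|S| \le \varphi(p^*) = p-1$ distinct values, hence they omit at least one residue $c \in [p]$; setting $A_p := [p]\setminus\{c\}$ gives a set of the required size $p-1 = \varphi(p^*)$ containing every $\pi_m^{(p)}(\zeta_m^r)$. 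Together with $B_p = [p^{f-1}]$ this yields, via Theorem~\ref{0117-1} with $d=m$, an integral basis $I$ of $\MB{Q}(\zeta_m)$.

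The key verification is that $I$ contains $\eta_r\zeta_m^r$ for a suitable sign $\eta_r \in \{\pm 1\}$. Decomposing through $\Psi_m$ as in Definition~\ref{phi}, a root $\zeta_m^x$ lies in $I$ if and only if $\pi_m^{(p)}(\zeta_m^x) \in A_p$ and $\theta_m^{(p)}(\zeta_m^x) \in B_p$ for every prime factor $p$. The $\theta$-conditions hold automatically, because $\theta_m^{(p)}$ takes values exactly in $B_p = [p^{f-1}]$ (respectively $[2^{f-2}]$). For odd $p$ the condition $\pi_m^{(p)}(\zeta_m^r) \in A_p$ holds by construction, and by Lemma~\ref{red}~\ref{red:1} negation leaves every $\theta_m^{(p)}$ and every odd-prime $\pi_m^{(p)}$ unchanged. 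It then remains to fix the prime $2$: if $\pi_m^{(2)}(\zeta_m^r) \in \{0,1\}$ take $\eta_r = 1$, and otherwise take $\eta_r = -1$, since Lemma~\ref{red}~\ref{red:1} gives $\pi_m^{(2)}(-\zeta_m^r) = (\pi_m^{(2)}(\zeta_m^r)+2)\bmod 4 \in \{0,1\} = A_2$. When $m$ is odd there is no prime $2$ and one simply takes $\eta_r = 1$. Thus $\eta_r\zeta_m^r \in I$ for every $r \in S$, which proves the first assertion.

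For the last claim, assume the $2|S|$ elements $\pm\zeta_m^r$ $(r \in S)$ are pairwise distinct. Then the selected basis elements $\delta_r := \eta_r\zeta_m^r$ are pairwise distinct members of $I$, and I can write
\[
\frac12 \sum_{r \in S} \pm \zeta_m^r = \sum_{r \in S} \frac{\pm\eta_r}{2}\,\delta_r,
\]
an expansion in the integral basis $I$ whose every coordinate equals $\pm\tfrac12 \notin \MB{Z}$. Hence the sum is not a $\MB{Z}$-linear combination of $I$, and therefore it is not an algebraic integer. The main obstacle is the middle step: matching the $\Psi_m$-decomposition to the product form in Theorem~\ref{0117-1} and checking that the sign adjustment at $p=2$ (through Lemma~\ref{red}) does not disturb the odd-prime memberships. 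The bound $|S| \le \varphi(p^*) = p-1$ is precisely what leaves room to choose each $A_p$ avoiding a residue.
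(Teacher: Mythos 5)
Your proof is correct and follows essentially the same route as the paper: choosing $A_p$ to contain all residues $\pi_m^{(p)}(\zeta_m^r)$ (possible since $|S| \leq \varphi(p^*)$), taking $B_p = [p^{f-1}]$ and $A_2 = \{0,1\}$, invoking Theorem~\ref{0117-1} to get the integral basis, and using Lemma~\ref{red} to fix signs at the prime $2$ without disturbing the odd-prime data. Your concluding step via unique coordinates $\pm\tfrac{1}{2}$ in the basis is just a rephrasing of the paper's appeal to $\MB{Q}$-linear independence of the selected basis elements.
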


\begin{proof}
Let $m = 2^{f_0} p_1^{f_1} p_2^{f_2} \cdots p_t^{f_t}$ be the prime factorization of $m$ where $f_0$ is a non-negative integer not equal to $1$ and $f_1, \ldots,  f_t$ are positive integers.

Assume $|S| \leq \varphi(p^*)$ for every prime factor $p \neq 2$.
We give an integral basis of $\MB{Q}(\zeta_m)$ containing either $\zeta_m^r$ or $-\zeta_m^r$.
For every $j \geq 1$, we let
$
	B_{p_j} = [p_j^{f_j-1}],
$
and can take $A_{p_j}$ by the assumption such that
\begin{align}	\label{0210-1:2}
	\{ \pi_m^{(p_j)} (\zeta_m^r) \mid r \in S \}
	\subset A_{p_j} \subset [p_j]
	\quad \text{and} \quad |A_{p_j}| = \varphi(p_j^*).
\end{align}
In addition, if $m \equiv 0 \pmod 4$, then we let
\begin{align}
	B_{2} = [{2^{f_0}}/{2^*} ] = [2^{f_0-2}] \ \text{ and } \ A_2 = \{0,1\}.	\label{0210-1:3}
\end{align}
By Theorem~\ref{0117-1}, we derive an integral basis $I$ in~\eqref{0117-1:1} of $\MB{Q}(\zeta_m)$.

Next, we show that for each $r \in S$, either $\zeta_{m}^r$ or $-\zeta_{m}^r$ is contained in $I$.
Since
\begin{align*}
	\zeta_{m}^r 
%	&= (\Phi_m \circ \Psi_m)(\zeta_{m}^r) \\
%	&= \Phi_m ( \Psi_m^{(p_0)}(\zeta_{m}^r), \Psi_m^{(p_1)}(\zeta_{m}^r), \dots, \Psi_m^{(p_s)}(\zeta_{m}^r) ) \\
%	&= \prod_{j=0}^t \zeta_{q_j}^{\Psi_m^{(p_j)}(\zeta_{m}^r)} \\
	&= \prod_{j=0}^t \zeta_{p_j^*}^{\pi_m^{(p_j)}(\zeta_{m}^r)} \zeta_{q_j}^{\theta_m^{(p_j)}(\zeta_{m}^r)},
\end{align*}
where $p_0 = 2$,
it suffices to verify that there exists $\varepsilon \in \{\pm 1\}$ such that for every $j \in [t]$, $\pi_m^{(p_j)}(\varepsilon \zeta_{m}^r) \in A_{p_j}$ and $\theta_m^{(p_j)}(\varepsilon \zeta_{m}^r) \in B_{p_j}$.

In the case of $f_0=0$, we see that all $\zeta_m^r$ is contained in $I$.
Hence we assume $f_0 \geq 2$.
If $\TR{\pi_m(2)}\TB{\pi_m^{(2)}}\MNOTE[green]{Change B.27}(\zeta_m^r) \in \{0,1\}$ for some $r \in S$,
then $\zeta_m^r \in I$.
Otherwise, we have  by Lemma~\ref{red}, $\pi^{(2)}_m(-\zeta_m^{r}) \equiv \pi^{(2)}_m(\zeta_m^{r})+2 \equiv 0,1 \pmod 4$
and $\pi_m^{(p_j)}(-\zeta_m^r) \in A_{p_j}$ for every $j \geq 1$.
These imply $-\zeta_m^r \in I$.
Thus, $I$ is a desired integral basis.

Lastly, if all $\pm \zeta_m^r$ $(r \in S)$ are pairwise-distinct, then $\zeta_m^r$ $(r \in S)$ are $\MB{Q}$-linearly independent.
Thus,~\eqref{0210-1:1} is not an algebraic integer.
\end{proof}

More precise discussion shows that $\Delta$ in~\eqref{0120-2}, which is our interest, is not an algebraic integer.

\begin{lem} \label{0120-1}
Let $m \geq 2$ be an integer with $m \not\equiv 2 \pmod{4}$,
and let $p, q$ be positive integers.
If $\zeta_m^{p}, \zeta_m^{-p}, \zeta_m^{q}, \zeta_m^{-q}$ are $\MB{Q}$-linearly independent,
then 
\[
\Delta = \frac{\pm \zeta_{m}^{p} \pm \zeta_{m}^{-p} \pm \zeta_{m}^{q} \pm \zeta_{m}^{-q}}{2}
\]
is not an algebraic integer.
\end{lem}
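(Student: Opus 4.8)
The plan is to expand $2\Delta$ in an integral basis of $\MB{Q}(\zeta_m)$ furnished by Theorem~\ref{0117-1} and to locate a coefficient equal to an odd integer. Write $2\Delta = s_1\zeta_m^{p} + s_2\zeta_m^{-p} + s_3\zeta_m^{q} + s_4\zeta_m^{-q}$ with $s_i \in \{\pm 1\}$; being a sum of roots of unity, it is an algebraic integer, hence uniquely a $\MB{Z}$-linear combination of any integral basis $I$ of $\MB{Q}(\zeta_m)$, and consequently $\Delta = \tfrac12(2\Delta)$ is an algebraic integer if and only if every one of these coefficients is even. I first record that the $\MB{Q}$-linear independence hypothesis makes the four exponents pairwise distinct and nonzero modulo $m$, and moreover forbids $\zeta_m^{r} = \pm\zeta_m^{r'}$ for distinct exponents $r,r'$ (either equality would be a nontrivial $\MB{Q}$-linear relation); in particular the eight numbers $\pm\zeta_m^{r}$ are pairwise distinct.

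If $3 \nmid m$, every odd prime factor $p$ of $m$ is at least $5$, so $\varphi(p^*) = p-1 \geq 4 = |S|$ for the set $S \subset \{1,\dots,m-1\}$ of the four exponents. Lemma~\ref{0210-1} then applies directly and shows that $\Delta$ is not an algebraic integer. Hence I may assume $3 \mid m$, which is where the real work lies: for the prime $3$ one has $\varphi(3^*) = 2 < 4$, so Bosma's basis can carry only two residues modulo $3$ and Lemma~\ref{0210-1} is unavailable.

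To proceed I would fix the basis data away from $3$ so that the only remaining freedom is the choice of $A_3$. Take $A_2 = \{0,1\}$, and for each prime $p \geq 5$ dividing $m$ choose $A_p$ of size $\varphi(p^*)$ containing the at most four values $\pi_m^{(p)}(\zeta_m^{r})$; let the $B_p$ be as prescribed in Theorem~\ref{0117-1}. Using the sign $\pm$ to force $\pi_m^{(2)} \in \{0,1\}$ via Lemma~\ref{red}~\ref{red:1} (which alters neither $\pi_m^{(p)}$ for odd $p$ nor any $\theta_m^{(p)}$), I can write $s_i\zeta_m^{r_i} = s_i'\,\zeta_3^{a(r_i)}\eta(r_i)$, where $a(r) := \pi_m^{(3)}(\zeta_m^{r})$, $s_i' \in \{\pm 1\}$, and $\eta(r)$ is the non-$\zeta_3$ part of the sign-adjusted element, namely $\zeta_{3^{f_3}}^{\theta_m^{(3)}(\zeta_m^r)}\prod_{p\neq 3}\zeta_{p^*}^{\pi_m^{(p)}(\zeta_m^r)}\zeta_{p^{f_p}}^{\theta_m^{(p)}(\zeta_m^r)}$; by construction $\zeta_3^{a}\eta(r) \in I$ for each $a \in A_3$, and distinct exponents yield distinct such elements (again by the remark above).

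Writing $n_j = \#\{\,r : a(r) = j\,\}$ with $n_0 + n_1 + n_2 = 4$, I split into two cases by the parity of these counts. If some $n_z$ is odd, set $A_3 = \{0,1,2\}\setminus\{z\} = \{x,y\}$ and rewrite each term with $a(r) = z$ through $\zeta_3^{z} = -\zeta_3^{x} - \zeta_3^{y}$, thereby expressing $2\Delta$ over $I$; summing all coefficients modulo $2$, each term with $a(r) \in A_3$ adds $s_r' \equiv 1$ to one basis element while each term with $a(r) = z$ adds $-s_r'$ to exactly two, so the sum of all coefficients is $\equiv \#\{r : a(r) \in A_3\} = 4 - n_z \equiv 1 \pmod 2$, whence some coefficient is odd. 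If instead all $n_j$ are even, then at most two residues modulo $3$ occur; choosing $A_3$ to contain them needs no rewriting, and $2\Delta = \sum_i s_i'\,\zeta_3^{a(r_i)}\eta(r_i)$ is a combination of four \emph{distinct} basis elements with coefficients $\pm 1$, each odd. Either way $2\Delta$ has an odd coefficient, so $\Delta$ is not an algebraic integer. The main obstacle throughout is exactly the prime $3$: the parity-counting trick, made possible by the single relation $\zeta_3^0 + \zeta_3^1 + \zeta_3^2 = 0$ together with the freedom to exclude a residue of odd multiplicity, is what replaces the now-inapplicable Lemma~\ref{0210-1}.
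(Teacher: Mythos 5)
Your proposal is correct, and it shares the paper's infrastructure --- reduction via Lemma~\ref{0210-1} when $3 \nmid m$ (or when at most two residues occur mod $3$), Bosma's integral bases from Theorem~\ref{0117-1} with $B_3 = [3^{f-1}]$ and a two-element $A_3$, sign normalization of the $2$-part via Lemma~\ref{red}, and the single relation $1 + \zeta_3 + \zeta_3^2 = 0$ to rewrite terms whose $\pi_m^{(3)}$-value falls outside $A_3$ --- but your endgame is genuinely different. The paper locates an exponent with $\pi_m^{(3)} = 0$ (WLOG $p$), uses Lemma~\ref{red} to pin $\pi_m^{(3)}(\zeta_m^{-p}) \in \{0,2\}$, and splits into two subcases, in each exhibiting an explicit expansion of $2\Delta$ with coefficients $\pm 1$ on basis elements it then argues are distinct. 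You instead count the multiplicities $n_j$ of the residues $\pi_m^{(3)}(\zeta_m^r)$ and, when some $n_z$ is odd, take $A_3 = \{0,1,2\}\setminus\{z\}$ and compute the \emph{total} of all coefficients of $2\Delta$ mod $2$: rewritten terms contribute $-2s_r' \equiv 0$ and the rest contribute $1$, giving total $\equiv 4 - n_z \equiv 1$, so some coefficient is odd. This buys a real robustness the paper's argument lacks: if a rewritten element $\zeta_3^{j}\zeta_m^{\pm p}$ happened to coincide up to sign with some $\zeta_m^{\pm q}$, the paper's coefficient-by-coefficient reading would need justification (the paper only checks pairwise distinctness among $\zeta_m^{-p}, \zeta_m^{\pm q}$, not against the $\zeta_3^{j}\zeta_m^{\pm p}$ terms), whereas your total-parity count is insensitive to such collisions --- merged coefficients change individual entries but not the sum mod $2$. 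Your complementary case (all $n_j$ even) forces at most two residues mod $3$, which is exactly the configuration the paper dispatches through Lemma~\ref{0210-1}, and your direct argument there (four basis elements, distinct because $\mathbb{Q}$-linear independence makes the eight signed roots $\pm\zeta_m^{\pm p}, \pm\zeta_m^{\pm q}$ pairwise distinct, each carrying coefficient $\pm 1$) is equivalent. What the paper's route buys in exchange is concrete, inspectable expansions of $2\Delta$; yours is shorter, avoids the WLOG-chasing, and quietly closes a distinctness gap, so it stands as a clean alternative proof.
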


\begin{proof}%[{\bf Proof of Lemma~\ref{0120-1}}]
Let $S = \{\pm p, \pm q\}$.
Let $m = 2^{f_0} 3^{f_1} p_2^{f_2} p_3^{f_3} \cdots p_t^{f_t}$ be the prime factorization of $m$ where $f_0$ and $f_1$ are non-negative integers and $f_2, \ldots,  f_t$ are positive integers.

First, \TB{since $2 = \varphi(3) < 4 \leq \varphi(p)$ for any prime number $p > 3$,}\MNOTE[green]{Change B.28} we consider the case where $m$ is not divisible by $3$
or where $|\{\pi_m^{(3)} (\zeta_m^r) \mid r \in S \}| \leq \varphi(3)$.
In this case, Lemma~\ref{0210-1} is applicable,
and hence we see that $\Delta$ is not an algebraic integer.
Below, we consider the case where $m$ is divisible by $3$, i.e., $f_2 \geq 1$,
and $|\{\pi_m^{(3)} (\zeta_m^r) \mid r \in S \}| = 3$.
Let $p_1 := 3$.
Let $B_3 := [3^{f_1-1}]$ and $A_3 := \{1,2\}$.
For every $j \geq 2$, we define
$
	B_{p_j} = [p_j^{f_j-1}],
$
and $A_{p_j}$ as in~\eqref{0210-1:2}.
In addition, if $m \equiv 0 \pmod 4$, then we define $A_2$ and $B_2$ by~\eqref{0210-1:3}.
By Theorem~\ref{0117-1}, we derive an integral basis $I$ in~\eqref{0117-1:1} of $\MB{Q}(\zeta_m)$.

Since $\{\pi_m^{(3)} (\zeta_m^r) \mid r \in S \} = \{0,1,2\}$,
there exists $r' \in S$ such that $\pi_m^{(3)} (\zeta_m^{r'}) = 0$.
We may assume that $r' = p$, i.e.,
$
\pi_m^{(3)} (\zeta_m^{p}) = 0
$
without loss of generality.
We have
$\zeta_m^p = -\zeta_3^1\zeta_m^p-\zeta_3^2\zeta_m^p$, 
$\pi_m^{(3)}(\zeta_3^1\zeta_m^p)=1$
and
$\pi_m^{(3)}(\zeta_3^2\zeta_m^p)=2.$
Also, we have by Lemma~\ref{red},
\begin{equation} \label{0123-3}
\pi_m^{(3)} (\zeta_m^{-p}) \in \{0,2\}.
\end{equation}
Hence, either $\pi_m^{(3)} (\zeta_m^{q})$ or $\pi_m^{(3)} (\zeta_m^{-q})$ must be $1$.
By Lemma~\ref{red}, we may assume that 
$$\pi_m^{(3)} (\zeta_m^{q}) = 1\text{ and } \pi_m^{(3)} (\zeta_m^{-q}) \in \{1,2\}$$ without loss of generality.

Assume $\pi^{(3)}(\zeta_m^{-p}) = 2$.
Take an arbitrary element $z \in \{\zeta_3^1\zeta_m^p, \zeta_3^2\zeta_m^p, \zeta_m^{-p}, \zeta_m^{\pm q}\}$.
Then, we have by Lemma~\ref{red}, 
\begin{align}	\label{0123-5}
	\pi_m^{(p_i)}(z) = \pi_m^{(p_i)}(-z)  \in A_{p_i} \quad \text{ and } \quad \theta_m^{(p_i)}(z) = \theta_m^{(p_i)}(-z) \in B_{p_i}
\end{align}
for every $1 \leq i \leq t$.
Furthermore, we have by Lemma~\ref{red} again,
\begin{align}	\label{0123-6}
	\pi_m^{(2)}(z) \text{ or }  \pi_m^{(2)}(-z)  \in A_2 \quad \text{ and } \quad \theta_m^{(2)}(z), \theta_m^{(2)}(-z) \in B_2
\end{align}
if $f_0 \geq 2$.
Hence, $\zeta_3^1\zeta_m^p, \zeta_3^2\zeta_m^p, \zeta_m^{-p}, \zeta_m^{\pm q}$ are contained in $I$, up to multiplication by $\pm 1$.
Since $\MB{Q}$-linear independence of $\zeta_m^{-p}, \zeta_m^{\pm q}$ implies that $\zeta_m^{-p}, \zeta_m^{\pm q}$ are pairwise distinct up to multiplication by $\pm 1$,
we see
\begin{align*}
	\Delta = \frac{\pm \zeta_{m}^{p} \pm \zeta_{m}^{-p} \pm \zeta_{m}^{q} \pm \zeta_{m}^{-q}}{2} 
	=\frac{\mp \zeta_3^1\zeta_{m}^{p} \mp \zeta_3^2\zeta_{m}^{p} \pm \zeta_{m}^{-p} \pm \zeta_{m}^{q} \pm \zeta_{m}^{-q}}{2} 
\end{align*}
is not an algebraic integer.

Assume $\pi^{(3)}(\zeta_m^{-p}) = 0$.
Then,
\begin{align*}
	\zeta_m^{-p} = -\zeta_3^1\zeta_m^{-p}-\zeta_3^2\zeta_m^{-p}, 
	\quad \pi_m^{(3)}(\zeta_3^1\zeta_m^{-p})=1
	\quad \text{ and }
	\quad \pi_m^{(3)}(\zeta_3^2\zeta_m^{-p})=2.
\end{align*}
We take an arbitrary element $z \in \{\zeta_3^1\zeta_m^{\pm p}, \zeta_3^2\zeta_m^{ \pm p}, \zeta_m^{\pm q}\}$.
Then, by Lemma~\ref{red}, we have~\eqref{0123-5} for every $1 \leq i \leq t$, and obtain~\eqref{0123-6} if $f_0 \geq 2$.
Thus, $\zeta_3^1\zeta_m^{\pm p}, \zeta_3^2\zeta_m^{ \pm p}, \zeta_m^{\pm q}$ are contained in $I$, up to multiplying by $\pm 1$.
Also, $\pm \zeta_3^1\zeta_m^{\pm p}, \pm \zeta_3^2\zeta_m^{ \pm p}$ are pairwise distinct by the assumption that $\zeta_m^{\pm p}$ are $\MB{Q}$-linearly independent.
Therefore,
\begin{align*}
	\Delta = \frac{\pm \zeta_{m}^{p} \pm \zeta_{m}^{-p} \pm \zeta_{m}^{q} \pm \zeta_{m}^{-q}}{2} 
	=\frac{\mp \zeta_3^1\zeta_{m}^{p} \mp \zeta_3^2\zeta_{m}^{p} \mp \zeta_3^1\zeta_{m}^{-p} \mp \zeta_3^2\zeta_{m}^{-p} \pm \zeta_{m}^{q} \pm \zeta_{m}^{-q}}{2} 
\end{align*}
is not an algebraic integer.
\end{proof}

We are now almost ready to prove our second main result (Theorem~\ref{M1}).
We show the case where $l$ is odd first,
and later we deal with the case where $l$ is even.

\begin{thm}
Let $X=X(\MB{Z}_{2l}, \{\pm a, \pm b\})$ be a connected $4$-regular circulant graph,
and let $a+b \neq l$.
If $l$ is odd,
then $X$ does not admit perfect state transfer between vertex type states.
\end{thm}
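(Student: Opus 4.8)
The plan is to run the chain of necessary conditions assembled above to a contradiction. Suppose, for contradiction, that $X$ admits perfect state transfer. Since $l$ is odd we have $l \not\equiv 2 \pmod 4$, so Lemma~\ref{0114-1} applies and tells us that
\[
\Delta := \frac{(-1)^a \zeta_{l}^{\varepsilon(a)} + (-1)^a \zeta_{l}^{-\varepsilon(a)} + (-1)^b \zeta_{l}^{\varepsilon(b)} + (-1)^b \zeta_{l}^{-\varepsilon(b)}}{2}
\]
is an algebraic integer. Writing $m := l$, $p := \varepsilon(a)$ and $q := \varepsilon(b)$, this is precisely an expression of the form $\Delta = \tfrac{\pm \zeta_m^{p} \pm \zeta_m^{-p} \pm \zeta_m^{q} \pm \zeta_m^{-q}}{2}$ with $m \not\equiv 2 \pmod 4$ and $p,q$ positive integers (note $\varepsilon(c)\ge 1$ for $c\in\{1,\dots,l-1\}$). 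The strategy is then to show that $\zeta_m^{p}, \zeta_m^{-p}, \zeta_m^{q}, \zeta_m^{-q}$ are $\MB{Q}$-linearly independent: by Lemma~\ref{0120-1} this would force $\Delta$ \emph{not} to be an algebraic integer, contradicting the conclusion of Lemma~\ref{0114-1}.

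To obtain the linear independence via Lemma~\ref{0127-1}, I must verify the four congruences $4p, 4q, 2(p+q), 2(p-q) \not\equiv 0 \pmod{l}$. The key identity is $2\varepsilon(c) \equiv c \pmod{l}$ for every integer $c$, which follows immediately from the two cases in the definition of $\varepsilon$ (using that $l$ is odd in the odd-$c$ case). Hence $2p \equiv a$ and $2q \equiv b \pmod l$, so that $4p \equiv 2a$ and $4q \equiv 2b \pmod l$. Because $l$ is odd, $2$ is invertible modulo $l$, and thus $2a \equiv 0 \pmod l$ would force $a \equiv 0 \pmod l$, which is excluded by~(\ref{0128-2}); this gives $4p \not\equiv 0 \pmod l$, and likewise $4q \not\equiv 0 \pmod l$. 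For the remaining two, $2(p\pm q) \equiv a \pm b \pmod l$, and I reduce to the normalized range $a,b \in \{1,\dots,l-1\}$. There the only multiple of $l$ attainable by $a+b$ is $l$ itself (ruled out by the hypothesis $a+b \neq l$), and the only multiple of $l$ attainable by $a-b$ is $0$ (ruled out by $a \neq b$, which follows from $a-b \not\equiv 0 \pmod{2l}$ in~(\ref{0128-3})). This also records $p \neq q$, so $p,q$ are distinct as Lemma~\ref{0127-1} requires.

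With all four congruences confirmed, Lemma~\ref{0127-1} yields the $\MB{Q}$-linear independence of $\zeta_m^{p}, \zeta_m^{-p}, \zeta_m^{q}, \zeta_m^{-q}$, and Lemma~\ref{0120-1} then shows $\Delta$ is not an algebraic integer, completing the contradiction. I expect the only delicate point to be the passage from the arithmetic conditions on $a,b$ to those on $p=\varepsilon(a)$ and $q=\varepsilon(b)$: the step $4p\equiv 2a$ that kills $4p \equiv 0$ genuinely relies on the invertibility of $2$ modulo $l$, i.e.\ on $l$ being odd. This is exactly the feature that fails when $l$ is even, which is why that case must be treated separately afterward; everything else here is routine range-checking once the identity $2\varepsilon(c)\equiv c \pmod l$ is established.
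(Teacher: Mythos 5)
Your proposal is correct and follows essentially the same route as the paper: invoke Lemma~\ref{0114-1} to get that $\Delta$ would be an algebraic integer, verify $4\varepsilon(a),\,4\varepsilon(b),\,2(\varepsilon(a)\pm\varepsilon(b)) \not\equiv 0 \pmod{l}$ so that Lemma~\ref{0127-1} gives $\MB{Q}$-linear independence, and conclude via Lemma~\ref{0120-1} that $\Delta$ is not an algebraic integer. Your single identity $2\varepsilon(c) \equiv c \pmod{l}$ (valid precisely because $l$ is odd) is a neat streamlining of the paper's parity-by-parity case analysis, of which the paper only writes out the representative case $(\varepsilon(a),\varepsilon(b)) = (\tfrac{a}{2}, \tfrac{b+l}{2})$, but it is the same argument in substance.
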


\begin{proof}
Let $l$ be an odd integer.
By Lemma~\ref{0114-1}, it suffices to prove 
\[ \Delta := \frac{(-1)^a \zeta_{l}^{\varepsilon(a)} + (-1)^a \zeta_{l}^{-\varepsilon(a)} + (-1)^b \zeta_{l}^{\varepsilon(b)} + (-1)^b \zeta_{l}^{-\varepsilon(b)} }{2}
\]
is not an algebraic integer.
To confirm that the assumptions of Lemma~\ref{0120-1} are satisfied,
we want to show that $4\varepsilon(a), 4\varepsilon(b), 2(\varepsilon(a) \pm \varepsilon(b)) \not\equiv 0 \pmod{l}$ \TB{as these will be needed in order to apply Theorem~\ref{0127-1}}\MNOTE{Change A.17}.
This can be proved by dividing the case by the parity of $a$ and $b$.
Since tedious calculations follow,
we show only the case $(\varepsilon(a), \varepsilon(b)) = (\frac{a}{2}, \frac{b+l}{2})$ as a representative.
Suppose $4\varepsilon(a) \equiv 0 \pmod{l}$.
We have $2a \equiv 0 \pmod{l}$.
Since $l$ is odd, $a \equiv 0 \pmod{l}$, which contradicts~(\ref{0128-2}).
%Thus $4\varepsilon(a) \not\equiv 0 \pmod{l}$.
Suppose $4\varepsilon(b) \equiv 0 \pmod{l}$.
We have $0 \equiv 2(b+l) \equiv 2b \pmod{l}$.
Since $l$ is odd, $b \equiv 0 \pmod{l}$, which contradicts~(\ref{0128-2}).
Suppose $2(\varepsilon(a) + \varepsilon(b)) \equiv 0 \pmod{l}$.
We have $0 \equiv a + b+l \equiv a + b \pmod{l}$,
so $a + b \equiv 0, l \pmod{2l}$.
Both contradict (\ref{0128-3}) or our assumption.
Suppose $2(\varepsilon(a) - \varepsilon(b)) \equiv 0 \pmod{l}$.
We have $0 \equiv a - b - l \equiv a - b \pmod{l}$,
so $a - b \equiv 0, l \pmod{2l}$.
Both contradict (\ref{0128-3}) or (\ref{0128-4}).
Therefore by Lemma~\ref{0127-1} and Lemma~\ref{0120-1},
we see that $\Delta$ is not an algebraic integer.
\end{proof}

The reason why the above proof worked is that the assumption that $l$ is odd guaranteed $\MB{Q}$-linear independence of $\zeta_l^{\varepsilon(r)}$ $(r \in S=\{\pm a,\pm b\})$. 
However, when $l$ is even,
for example $l=6$ and $(a,b)=(1,3)$,
then it holds that $\zeta_{2l}^{-b} = \zeta_4^3 = -\zeta_4^1 = -\zeta_{2l}^{b}$.
This means that the case where $l$ is even does not necessarily guarantee $\MB{Q}$-linear independence of $\zeta_{2l}^{r}$ $(r \in S)$. 
Therefore, more devised discussion is required when $l$ is even.

\begin{lem} \label{0228-2}
Let $X=X(\MB{Z}_{2l}, \{\pm a, \pm b\})$ be a connected $4$-regular graph.
If $4a \equiv 0 \pmod{2l}$,
then $4b \not\equiv 0 \pmod{2l}$.
Moreover,
$\zeta_{2l}^{b}$ and  $\zeta_{2l}^{-b}$ are $\MB{Q}$-linearly independent.
\end{lem}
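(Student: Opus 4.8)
The plan is to prove the two assertions in turn, reducing each to an elementary fact about roots of unity; I expect that neither connectivity nor the full list of valency conditions is really needed, only the normalization $1 \le a,b \le l-1$ and the distinctness of the four connection elements forced by $4$-regularity. First I would rewrite the hypothesis: $4a \equiv 0 \pmod{2l}$ is equivalent to $l \mid 2a$, and since $0 < 2a < 2l$ the only multiple of $l$ in this range is $l$ itself. Hence $2a = l$, which in particular forces $l$ to be even and pins down $a = l/2$.

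For the first claim I would then argue by contradiction: if $4b \equiv 0 \pmod{2l}$ held as well, the same computation would give $b = l/2 = a$. But then $\{\pm a, \pm b\} = \{l/2, -l/2\}$ has only two distinct elements (as $-l/2 \equiv 3l/2 \pmod{2l}$), so $X$ would have valency $2$, contradicting $4$-regularity, equivalently contradicting $a - b \not\equiv 0 \pmod{2l}$ from~\eqref{0128-3}. Therefore $4b \not\equiv 0 \pmod{2l}$, as required.

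For the ``moreover'' part I would reduce the $\MB{Q}$-linear independence of the two nonzero numbers $\zeta_{2l}^{b}$ and $\zeta_{2l}^{-b}$ to a condition on their quotient: they are $\MB{Q}$-linearly dependent precisely when $\zeta_{2l}^{b}/\zeta_{2l}^{-b} = \zeta_{2l}^{2b} = \zeta_{l}^{b}$ lies in $\MB{Q}$. Since a root of unity is rational only if it equals $\pm 1$, and $\zeta_{l}^{b} = \pm 1$ is equivalent to $l \mid 2b$, i.e.\ to $4b \equiv 0 \pmod{2l}$, the first part shows this quotient is a nontrivial root of unity, hence irrational. Consequently $\zeta_{2l}^{b}$ and $\zeta_{2l}^{-b}$ are $\MB{Q}$-linearly independent.

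I expect no genuine obstacle here: the content is essentially bookkeeping. The only points that need care are the translation of the congruence modulo $2l$ into one modulo $l$ (and noticing that $4a \equiv 0 \pmod{2l}$ silently forces $l$ even and $a = l/2$), and the standard fact that $\pm 1$ are the only rational roots of unity, which is what converts the divisibility statement into the desired independence. One could alternatively route the second assertion through Lemma~\ref{0127-1}, but since only the single pair $\zeta_{2l}^{\pm b}$ is in play, the direct quotient argument is shorter and self-contained.
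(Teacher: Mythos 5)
Your proof is correct, and its second half takes a genuinely different route from the paper's. On the first claim you and the paper do essentially the same thing: both reduce $4a \equiv 0 \pmod{2l}$ via~\eqref{0128-2} to $2a \equiv l \pmod{2l}$; you then use the normalization $a,b \in \{1,\dots,l-1\}$ to pin down $a = b = l/2$ and contradict $a - b \not\equiv 0 \pmod{2l}$, whereas the paper stays with congruences ($2a \equiv 2b \equiv l \pmod{2l}$ gives $a - b \equiv 0$ or $l \pmod{2l}$) and therefore invokes both~\eqref{0128-3} and~\eqref{0128-4}; your range argument silently absorbs the $a-b \equiv l$ case, which is legitimate under the stated normalization. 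For the independence of $\zeta_{2l}^{\pm b}$, the paper sets $c_1\zeta_{2l}^{b} + c_2\zeta_{2l}^{-b} = 0$, adds the conjugate to get $2(c_1+c_2)\cos\frac{b}{l}\pi = 0$, then extracts $2ic_1\sin\frac{b}{l}\pi = 0$, using $4b \not\equiv 0 \pmod{2l}$ to see that neither the cosine nor the sine vanishes — the two-term specialization of the cos/sin technique underlying Lemmas~\ref{0115-4},~\ref{0115-6} and~\ref{0127-1}. You instead note that two nonzero numbers are $\MB{Q}$-dependent iff their ratio $\zeta_{2l}^{2b} = \zeta_l^{b}$ is rational, that the only rational roots of unity are $\pm 1$, and that $\zeta_l^b = \pm 1$ iff $l \mid 2b$ iff $4b \equiv 0 \pmod{2l}$ — exactly what the first part rules out. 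Your quotient argument is shorter and isolates the real content (a single divisibility condition) in one equivalence; the paper's computation has the virtue of reusing verbatim the method it needs anyway for the four-term lemma. One aside to correct: routing the second assertion through Lemma~\ref{0127-1}, as you suggest in passing, would not actually work — that lemma is stated for the four powers $\zeta_m^{\pm p}, \zeta_m^{\pm q}$ under the hypotheses $4p, 4q, 2(p\pm q) \not\equiv 0 \pmod m$ and does not specialize to a single pair, so the inline computation (or your quotient argument) is genuinely needed here.
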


\begin{proof}
Our assumption derives that $2a \equiv 0 \pmod{l}$,
so $2a \equiv 0, l \pmod{2l}$.
By~(\ref{0128-2}), we have $2a \equiv l \pmod{2l}$.
Next, we suppose that $4b \equiv 0 \pmod{2l}$.
Similarly, we obtain $2b \equiv l \pmod{2l}$.
This implies that $a-b \equiv 0, l \pmod{2l}$.
However, this contradicts~(\ref{0128-3}) or~(\ref{0128-4}).

Next,
we assume that a $\MB{Q}$-linear combination
\begin{equation} \label{0228-1}
c_1\zeta_{2l}^{b} + c_2\zeta_{2l}^{-b} = 0.
\end{equation}
Take the conjugate of both sides and add it to Equality~(\ref{0228-1}).
We have $2 (c_1 + c_2) \cos \frac{2b}{2l}\pi = 0$,
and hence $c_1 + c_2 = 0$ since $4b \not\equiv 0 \pmod{2l}$.
Substituting it into Equality~(\ref{0228-1}).
Then we have $2i c_1 \sin \frac{2b}{2l}\pi = 0$,
and hence $c_1 = 0$ since $4b \not\equiv 0 \pmod{2l}$.
Therefore, $\zeta_{2l}^{b}$ and  $\zeta_{2l}^{-b}$ are $\MB{Q}$-linearly independent.
\end{proof}

\begin{thm}
Let $X=X(\MB{Z}_{2l}, \{\pm a, \pm b\})$ be a connected $4$-regular with $a+b \neq l$.
If $l$ is even,
then $X$ does not admit perfect state transfer between vertex type states.
\end{thm}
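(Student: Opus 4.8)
The plan is to mirror the odd-$l$ argument. Assuming that $X$ admits perfect state transfer, Lemma~\ref{nonalg} forces
\[
\Delta = \frac{\zeta_{2l}^a + \zeta_{2l}^{-a} + \zeta_{2l}^b + \zeta_{2l}^{-b}}{2}
\]
to be an algebraic integer, and I would obtain a contradiction by proving that $\Delta$ is in fact \emph{not} an algebraic integer. Since $l$ is even we have $2l \equiv 0 \pmod 4$, so Lemmas~\ref{0127-1}, \ref{0120-1} and~\ref{0210-1} all apply with $m = 2l$. As arranged earlier, I normalize $a,b \in \{1,\dots,l-1\}$.

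First I would record the two congruences that hold unconditionally. Because $a,b \in \{1,\dots,l-1\}$, the only multiple of $l$ that $a+b$ could equal is $l$ itself, and $a+b \neq l$ by hypothesis, so $2(a+b) \not\equiv 0 \pmod{2l}$; likewise $a\neq b$ (as $X$ has valency $4$) forces $a-b$ to be a nonzero element of $\{-(l-2),\dots,l-2\}$, hence not a multiple of $l$, so $2(a-b) \not\equiv 0 \pmod{2l}$. Consequently the only hypotheses of Lemma~\ref{0127-1} that can fail are $4a \equiv 0$ or $4b \equiv 0 \pmod{2l}$, which is exactly the dependency flagged in the remark preceding the theorem.

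The argument then splits into two cases. If neither $4a \equiv 0$ nor $4b \equiv 0 \pmod{2l}$, then Lemma~\ref{0127-1} gives that $\zeta_{2l}^a,\zeta_{2l}^{-a},\zeta_{2l}^b,\zeta_{2l}^{-b}$ are $\MB{Q}$-linearly independent, and Lemma~\ref{0120-1} immediately yields that $\Delta$ is not an algebraic integer. The substantive case is when one of these vanishes modulo $2l$; by Lemma~\ref{0228-2} at most one does, so I may assume $4a \equiv 0 \pmod{2l}$ while $\zeta_{2l}^b, \zeta_{2l}^{-b}$ remain $\MB{Q}$-linearly independent. From $4a \equiv 0 \pmod{2l}$ together with~\eqref{0128-2} I get $2a \equiv l \pmod{2l}$, whence $(\zeta_{2l}^a)^2 = \zeta_{2l}^{l} = -1$ and therefore $\zeta_{2l}^{-a} = -\zeta_{2l}^a$. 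The two $a$-terms cancel exactly, leaving $\Delta = \tfrac{1}{2}(\zeta_{2l}^b + \zeta_{2l}^{-b})$.

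To finish I would invoke the ``in particular'' part of Lemma~\ref{0210-1} with $S = \{b,-b\}$ and $m = 2l$: the hypothesis $|S| = 2 \le \varphi(p)$ holds for every odd prime factor $p$ of $2l$, and the four elements $\pm\zeta_{2l}^{b},\pm\zeta_{2l}^{-b}$ are pairwise distinct because $\zeta_{2l}^b,\zeta_{2l}^{-b}$ are $\MB{Q}$-linearly independent; hence $\tfrac{1}{2}(\zeta_{2l}^b + \zeta_{2l}^{-b})$ is not an algebraic integer. In either case $\Delta$ fails to be an algebraic integer, contradicting Lemma~\ref{nonalg}, so $X$ admits no perfect state transfer. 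I expect the main obstacle to be precisely this second case: unlike the odd-$l$ setting, $\MB{Q}$-linear independence of the four roots of unity can genuinely break down, and the crux is to recognize that the offending pair $\zeta_{2l}^{\pm a}$ then cancels, reducing $\Delta$ to a two-term sum that Lemmas~\ref{0228-2} and~\ref{0210-1} dispatch.
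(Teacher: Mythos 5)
Your proposal is correct and follows essentially the same route as the paper's proof: the same split into the case $4a,4b \not\equiv 0 \pmod{2l}$ (handled by Lemmas~\ref{0127-1} and~\ref{0120-1}, after checking $2(a\pm b)\not\equiv 0 \pmod{2l}$) and the degenerate case $4a \equiv 0 \pmod{2l}$, where $2a \equiv l \pmod{2l}$ makes the $a$-terms cancel and Lemmas~\ref{0228-2} and~\ref{0210-1} finish off $\Delta = \tfrac{1}{2}(\zeta_{2l}^b + \zeta_{2l}^{-b})$. The only differences are cosmetic: you verify $2(a\pm b)\not\equiv 0$ up front rather than inside the first case, and you derive the cancellation via $(\zeta_{2l}^a)^2 = -1$ instead of the paper's explicit computation $\zeta_{2l}^{\pm a} = \zeta_4^{\pm(2s+1)}$.
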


\begin{proof}
%Next, we consider the case where $l$ is even.
Let $\Delta$ be the element of $\MB{Q}(\zeta_{2l})$ in~\eqref{nonalg:1}.
By Lemma~\ref{nonalg}, it suffices to check that $\Delta$ is not an algebraic integer.
First, we consider the case where $4a, 4b \not\equiv 0 \pmod{2l}$.
Then we claim that $2(a \pm b) \not\equiv 0 \pmod{2l}$.
Indeed, we suppose that $2(a \pm b) \equiv 0 \pmod{2l}$.
Then we have $a \pm b \equiv 0, l \pmod{2l}$,
which contradicts (\ref{0128-3}), (\ref{0128-4}), or our assumption.
Therefore by Lemma~\ref{0127-1} and Lemma~\ref{0120-1},
we see that $\Delta$ is not an algebraic integer.
Hence Lemma~\ref{0128-5} implies that the graph $X$ does not admit perfect state transfer.

Next,
we consider the case where either $4a \equiv 0 \pmod{2l}$ or $4b \equiv 0 \pmod{2l}$.
We may assume $4a \equiv 0 \pmod{2l}$ without loss of generality.
By Lemma~\ref{0228-2},
we have $4b \not\equiv 0 \pmod{2l}$.
Since $4a \equiv 0 \pmod{2l}$ and~(\ref{0128-2}),
we have $2a \equiv l \pmod{2l}$.
Thus, there exists an integer $s$ such that $2a - l = 2sl$,
i.e., $2a= (2s+1)l$.
We have $\zeta_{2l}^{\pm a} =\zeta_{4l}^{\pm 2a} = \zeta_{4l}^{\pm (2s+1)l} = \zeta_{4}^{\pm (2s+1)}$.
In particular, $\zeta_{2l}^{a} + \zeta_{2l}^{-a} = 0$.
Hence,
\[ \Delta = \frac{\zeta_{2l}^a + \zeta_{2l}^{-a} + \zeta_{2l}^b + \zeta_{2l}^{-b}}{2} = \frac{\zeta_{2l}^b + \zeta_{2l}^{-b}}{2}. \]
By Lemma~\ref{0228-2},
$\zeta_{2l}^{b}$ and  $\zeta_{2l}^{-b}$ are $\MB{Q}$-linearly independent, so Lemma~\ref{0210-1} derives that $\Delta$ is not an algebraic integer.
Therefore, Lemma~\ref{0128-5} implies that the graph $X$ does not admit perfect state transfer.
\end{proof}

\section*{Acknowledgements} \MNOTE[black]{Change C.5}
\TB{We are deeply grateful to the anonymous reviewers for their insightful feedback and constructive suggestions,
which have significantly contributed to improving the manuscript.
We would also like to thank Akihiro Higashitani for his helpful advice and insightful discussions.}
S.K. is supported by JSPS KAKENHI (Grant Number JP20J01175).
K.Y. is supported by JSPS KAKENHI (Grant Number JP21J14427).

\bibliographystyle{plain}
\bibliography{references}
%automorphism
%circulant
\end{document}